% final
\documentclass[12pt]{amsart}  %[12pt] while developing, omitted for final
%\documentclass{amsart}  %used for final

%packages
\usepackage[latin1]{inputenc}
\usepackage{amsmath} 
\usepackage{amsfonts}
\usepackage{amssymb}
\usepackage{stmaryrd}
\usepackage{latexsym} 
\usepackage{graphicx}
\usepackage{subfigure}
\usepackage{color}
\usepackage{hyperref}
\usepackage{verbatim}
\usepackage[all]{xy}
\usepackage{graphics}
\usepackage{pdfsync}
%\usepackage{natbib}
%\usepackage[notref, notcite]{showkeys} %shows labels

% 1in margins at left and right
\oddsidemargin=0in
\evensidemargin=0in
\textwidth=6.50in             % paper is 8.50in wide

% 1in margins at top and bottom
\headheight=10pt
\headsep=10pt
\topmargin=.5in
\textheight=8in

%% Theorem definitions
%\theoremstyle{plain}
\theoremstyle{definition}
\newtheorem{theorem}{Theorem}%[section]
\newtheorem{proposition}[theorem]{Proposition}

\newtheorem{lemma}[theorem]{Lemma}
\newtheorem{corollary}[theorem]{Corollary}
\newtheorem{conjecture}[theorem]{Conjecture}
\newtheorem{definition}[theorem]{Definition}
\newtheorem{example}[theorem]{Example}

\theoremstyle{remark}
\newtheorem{remark}[theorem]{Remark}
\newtheorem{notation}[theorem]{Note}

%\numberwithin{equation}{section}
%\renewcommand{\labelitemi}{$\circ$} % A typesetting pun?!
%\setcounter{MaxMatrixCols}{20}

%%%%%%blackboard letters

%:Sym definitions

%: compositions and tableaux

% Others
\newcommand{\ra}{r_\alpha}
\newcommand{\rb}{r_\beta}
\newcommand{\bd}{P_{R,S}}

%%% tableau macros %%%
\newlength\cellsize \setlength\cellsize{15\unitlength}
\savebox2{%
\begin{picture}(15,15)
\put(0,0){\line(1,0){15}}
\put(0,0){\line(0,1){15}}
\put(15,0){\line(0,1){15}}
\put(0,15){\line(1,0){15}}
\end{picture}}
\newcommand\cellify[1]{\def\thearg{#1}\def\nothing{}%
\ifx\thearg\nothing
\vrule width0pt height\cellsize depth0pt\else
\hbox to 0pt{\usebox2\hss}\fi%
\vbox to 15\unitlength{
\vss
\hbox to 15\unitlength{\hss$#1$\hss}
\vss}}
\newcommand\tableau[1]{\vtop{\let\\=\cr
\setlength\baselineskip{-16000pt}
\setlength\lineskiplimit{16000pt}
\setlength\lineskip{0pt}
\halign{&\cellify{##}\cr#1\crcr}}}
\savebox3{%
\begin{picture}(15,15)
\put(0,0){\line(1,0){15}}
\put(0,0){\line(0,1){15}}
\put(15,0){\line(0,1){15}}
\put(0,15){\line(1,0){15}}
\end{picture}}
\newcommand\expath[1]{%
\hbox to 0pt{\usebox3\hss}%
\vbox to 15\unitlength{
\vss
\hbox to 15\unitlength{\hss$#1$\hss}
\vss}}
\newcommand\bas[1]{\omit \vbox to \cellsize{ \vss \hbox to \cellsize{\hss$#1$\hss} \vss}}

%%%%% end tableaux macros %%%%%%%%%%%%%%%%%%%%%%%%%%%%

\begin{document}

\title[Necessary conditions for Schur-maximality]{Necessary conditions for Schur-maximality}

\author{Foster Tom}
\address{
 Department of Mathematics,
 University of British Columbia,
 Vancouver BC V6T 1Z2, Canada}
\email{foster@math.ubc.ca}

\author{Stephanie van Willigenburg}
\address{
 Department of Mathematics,
 University of British Columbia,
 Vancouver BC V6T 1Z2, Canada}
\email{steph@math.ubc.ca}

\thanks{
Both  authors were supported in part by the National Sciences and Engineering Research Council of Canada.}
\subjclass[2010]{Primary 05E05; Secondary 05E10, 06A05, 06A06, 20C30}
\keywords{cell transfer, equitable ribbon, Littlewood-Richardson rule, ribbon Schur function, Schur-positive, skew shape, symmetric function}

\begin{abstract} McNamara and Pylyavskyy conjectured precisely which connected skew shapes are maximal in the Schur-positivity order, which says that $B\leq _s A$ if $s_A-s_B$ is Schur-positive. Towards this, McNamara and van Willigenburg proved that it suffices to study equitable ribbons, namely ribbons whose row lengths are all of length $a$ or $(a+1)$ for $a\geq 2$. In this paper we confirm the  conjecture of McNamara and Pylyavskyy in all cases where the comparable equitable ribbons form a chain. We also confirm a conjecture of McNamara and van Willigenburg regarding which equitable ribbons in general are minimal. 

Additionally, we establish two sufficient conditions for the difference of two ribbons to be Schur-positive, which manifest as diagrammatic operations on ribbons. We also deduce two necessary conditions for the difference of two equitable ribbons to be Schur-positive that rely on rows of length $a$ being at the end, or on rows of length $(a+1)$ being evenly distributed.
\end{abstract}

\maketitle
\tableofcontents
% Introduction
\section{Introduction}\label{sec:intro}
Within the algebra of symmetric functions, perhaps the most acclaimed basis is that consisting of Schur functions due to their ubiquitous nature: arising in enumerative combinatorics as generating functions for tableaux, in the representation theory of the symmetric and general linear groups, and in algebraic geometry when studying the cohomology ring of the Grassmannian, in addition to other areas such as quantum physics. One vibrant research avenue concerning them is that of determining when a symmetric function is Schur-positive, that is, when a symmetric function expanded as a linear combination of Schur functions has nonnegative coefficients. Schur-positive functions have two particular representation-theoretic interpretations. The first is that if a homogeneous symmetric function $f$ of degree $N$ is Schur-positive, then it arises as the Frobenius image of some representation of the symmetric group $S_N$. The second is that $f(x_1, \ldots, x_n)$ is the character of a polynomial representation of the general linear group $\mathit{GL}(n, \mathbb{C})$.  Consequently, knowing when a symmetric function is Schur-positive is  very desirable. However, this phenomenon is incredibly rare, as has been calculated by F. Bergeron, Patrias and Reiner \cite{FBergeron} who proved that the probability of a  monomial-positive symmetric function, which is homogeneous of degree $N$, to be Schur-positive is
$$\prod _{\lambda} \left( \sum _{\mu} K_{\lambda\mu} \right)^{-1}$$
where $\lambda$ and $ \mu$ are partitions of $N$ and $K_{\lambda\mu}$ is the Kostka coefficient. To help explain this phenomenon, in addition to dealing with Schur functions directly, tools have been developed to determine Schur-positivity such as dual equivalence graphs \cite{Assaf, Assaf2, BlasiakFomin}, or the theory of crystal bases that has also been applied as a means to determine Schur-positivity \cite{SchillingBook}.

Despite their rarity, examples of Schur-positive functions arise in a variety of contexts from graph theory and the study of chromatic symmetric functions \cite{Gasharov} and chromatic quasisymmetric functions \cite{ShareshianWachs} to enumerative combinatorics where sets of partitions determining a quasisymmetric function have been shown to determine a function that is in fact symmetric and Schur-positive \cite{Elizalde, Ges84, GesR}. However, the most classic rendition of the question of Schur-positivity is to determine when the difference  of two skew Schur functions is Schur-positive. This question is still considered to be intractable in full generality, however partial results and conjectures do exist \cite{BBR06, FFLP05, KWvW08,  Kir04, lpp, LLT97, McN08, McvW09b, Oko97} and more progress has been made when the difference is zero \cite{btvw, Gut09, McvW09a, RSvW07,  vWi05}. One particularly intriguing conjecture is due to McNamara and Pylyavskyy, who gave a construction for connected skew shapes $A_i$ such that for any other connected skew shape $B$ the difference of skew Schur functions $s_{A_i}-s_B$ is Schur-positive for some $A_i$ with the same number of cells as $B$. These $A_i$ were ribbons, that is, connected skew shapes with no $2\times2$ subdiagram, and in \cite{maxsupport} McNamara and van Willigenburg showed that such a connected skew shape must indeed be a ribbon whose row lengths are all $a$ or $(a+1)$. We extend these results by establishing additional necessary conditions on such ribbons and by proving the conjecture of McNamara and Pylyavskyy in the case where the set of ribbons whose row lengths are a fixed number of $a$'s and $(a+1)$'s is totally ordered by $A\geq_s B$ if $s_A-s_B$ is Schur-positive. That is, while not confirming the conjecture in full generality we do confirm it in the important case of chains. 

This article is structured as follows. In the next section we introduce necessary definitions and concepts, and prove that the ribbon conjectured to be maximal is indeed an equitable ribbon in Lemma~\ref{lem:eqrib} and Corollary~\ref{cor:eqrib}. In Section~\ref{sec:tools} we introduce two operations on ribbons that guarantee that the difference of two ribbon Schur functions is Schur-positive. More precisely, in Theorems~\ref{thm:moving1} and \ref{thm:moving1cor} we create a larger ribbon using an operation that moves cells from the top row of a ribbon. Meanwhile in Theorem~\ref{thm:celltr} we create a larger ribbon using an operation that exchanges two rows of a ribbon. We then apply these tools in Section~\ref{sec:chains} when we identify all chains of equitable ribbons in Theorem~\ref{thm:chains}, thereby confirming McNamara and Pylyavskyy's conjecture in these cases. In Section~\ref{sec:shortends} we confirm another case of this conjecture in Corollary~\ref{cor:shortends}, which is reliant on Theorem~\ref{thm:shortends} that a larger equitable ribbon must have short rows as its first and last rows. In Section~\ref{sec:coarsenings} we introduce the notions of profile and quasi-profile, the latter of which is particularly crucial as it yields another condition  for the Schur-positivity of a difference of equitable ribbons in Theorem~\ref{thm:smalls}, which implies that a larger equitable ribbon must have its longer rows being more evenly distributed,  confirms that the chains identified in 
 Theorem~\ref{thm:chains} are the only ones in Corollary~\ref{cor:incompar}, and confirms the conjecture of McNamara and van Willigenburg on minimal equitable ribbons in Theorem~\ref{thm:minimal}.

\section{Background}\label{sec:background}
\subsection{Compositions and partitions}\label{subsec:compositions} We say that a sequence of positive integers $\alpha = \alpha _1 \cdots \alpha _\ell$ is a \emph{composition} and call the $\alpha _i$ its \emph{parts}. If $\sum _{i=1} ^{\ell} \alpha _i = N$ then we say that $N$ is the \emph{size} of $\alpha$, denoted by $|\alpha |$, and say that $\ell$ is the \emph{length} of $\alpha$, denoted by $\ell (\alpha)$. If $\alpha _{i+1} = \alpha _{i+2} = \cdots = \alpha _{i+m} = j$ then we often abbreviate this to $j^m$. Given two compositions $\alpha = \alpha _1 \cdots \alpha _{\ell(\alpha)}$ and $\beta = \beta _1 \cdots \beta _{\ell(\beta)}$ we define the \emph{reversal} of $\alpha$ to be
$$\alpha ^\ast = \alpha _{\ell(\alpha)} \cdots \alpha _1$$the \emph{concatenation} of $\alpha$ and $\beta$ to be
$$\alpha\cdot \beta = \alpha _1 \cdots \alpha _{\ell(\alpha)}\beta _1 \cdots \beta _{\ell(\beta)}$$the \emph{near-concatenation} of $\alpha$ and $\beta$ to be
$$\alpha\odot \beta = \alpha _1 \cdots (\alpha _{\ell(\alpha)}+\beta _1) \cdots \beta _{\ell(\beta)}$$and the \emph{composition of compositions} \cite[Section 3.1]{btvw} $\alpha$ and $\beta$ to be
$$\alpha \circ \beta = \beta ^{\odot \alpha _1}\cdot \beta ^{\odot \alpha _2} \cdots \beta ^{\odot \alpha _{\ell(\alpha)}}$$where $\beta ^{\odot \alpha _i}$ denotes the near-concatenation of $\alpha _i$ copies of $\beta$. With this in mind it is straightforward to prove  that
\begin{equation}\label{eq:circsize}
|\alpha \circ \beta| = |\alpha||\beta|.
\end{equation}

\begin{example}\label{ex:compofcomp}
Let $\alpha = 12$ and $\beta = 31$. Then $\alpha ^\ast = 21$, $\alpha \cdot \beta = 1231$, $\alpha \odot \beta = 151$ and
$$\alpha \circ \beta = 31 \cdot (31\odot 31) = 31341.$$
\end{example}

Any composition whose parts when read from left to right are weakly decreasing is called a \emph{partition}. In our above example $\beta = 31$ is a partition. Given partitions $\lambda$ and $\mu$ such that $\ell = \max \{ \ell(\lambda), \ell(\mu) \}$, append zeroes to $\lambda$ or $\mu$ to artificially increase the length so that $\ell(\lambda)= \ell(\mu)$. If $\lambda _i = \mu _i$ for all $1\leq i < j$ and $\lambda _j \neq \mu _j$, then if $\lambda _j >\mu _j$ we say that $\lambda$ is \emph{lexicographically greater} than $\mu$, denoted by $\lambda >_{lex} \mu$. Also note that every composition $\alpha$ \emph{determines} a partition $\lambda(\alpha)$ by reordering the parts of $\alpha$ into weakly decreasing order. Lastly, for convenience, we denote by $\emptyset$ the empty composition or partition of size and length 0.

\subsection{Skew shapes and ribbons}\label{subsec:ribbons}
Given a partition $\lambda$, its \emph{diagram}, also denoted by $\lambda$, is the array of left-justified cells with $\lambda _i$ cells in row $i$ from the \emph{top}. If we refer to cell $(i,j)$ then this refers to the cell in the $i$-th row from the top and $j$-th column from the left. The \emph{only} exception to this is Subsection~\ref{subsec:eqrib} where for ease of exposition this will refer to the cell in the $i$-th row from the bottom and $j$-th column from the left.
Let $\lambda$ and $\mu$ be partitions such that $\ell (\lambda )\geq \ell (\mu)$ and $\lambda _i \geq \mu _i$ for all $1\leq i \leq \ell(\mu)$. Then we define the \emph{skew shape} $\lambda /\mu$ to be the array of cells
$$\lambda / \mu = \{ (i,j) : (i,j) \in \lambda, (i,j) \not\in \mu \}.$$We call $\lambda$ the \emph{outer shape} and $\mu$ the \emph{inner shape}. Also by \emph{row} (respectively, \emph{column}) \emph{length} we refer to the number of cells in a given row (respectively, column) of $\lambda /\mu$. We define the \emph{transpose} of $\lambda / \mu$   to be the array of cells
$$(\lambda / \mu )^t = \{ (j,i) : (i,j) \in \lambda, (i,j) \not\in \mu \}.$$
Skew shapes can be either disconnected or connected, where a skew shape is said to be \emph{disconnected} if it can be partitioned into nonempty skew shapes $D_1$ and $D_2$ so no cell in $D_1$ has a row or column in common with any cell in $D_2$. Otherwise $\lambda / \mu$ is said to be \emph{connected}. Connected skew shapes that will be our focus later are \emph{ribbons}, which are connected skew shapes not containing the subdiagram $22 = \setlength{\unitlength}{2mm}
\begin{picture}(2,2)(0,0.4)
\multiput(0,0)(0,1){3}{\line(1,0){2}}
\multiput(0,0)(1,0){3}{\line(0,1){2}}
\end{picture}\ $. Observe that, therefore, a ribbon is completely determined by its row lengths read from top to bottom. This yields a composition, and henceforth we abuse notation by identifying a ribbon with its corresponding composition.

\begin{example}\label{ex:ribbon}
Below is the skew shape $\lambda/\mu = 4431/32$. Note that it is also the ribbon $1231$.
$$\tableau{&&&\ \\&&\ &\ \\\ &\ &\ \\\ }$$
\end{example}

Given a ribbon, we refer to the top row and the bottom row as the \emph{end rows} and every other row as an \emph{intermediate row}. Also note that given a ribbon with $R$ rows, $S$ columns and $N$ cells in total that
$$R+S=N+1$$and knowing any two of these statistics will determine the third.

Narrowing our focus even further, we now turn our attention to equitable ribbons. We describe a ribbon as \emph{row-equitable} if all its row lengths are $a$ or $(a+1)$ for some $a \geq 1$ and \emph{column-equitable} if all its column lengths are $b$ or $(b+1)$ for some $b\geq 1$. A ribbon is said to be \emph{equitable} if it is both row-equitable and column-equitable. Given such an equitable ribbon we call the rows of length $a$ \emph{short rows} and of length $(a+1)$ \emph{long rows}. Furthermore, given an equitable ribbon $\alpha$ we denote by $SE(\alpha)$ the number of end rows of $\alpha$ that are short. We also note that if a ribbon is row-equitable and $a\geq 2$, then we are guaranteed that it is equitable, and similarly if we are given that a ribbon is column-equitable and $b\geq 2$, then again we are guaranteed that it is equitable.

\begin{example}\label{ex:eqribbon} The ribbon in Example~\ref{ex:ribbon} is column-equitable but not row-equitable. The ribbons $\alpha = 232$ and $\beta = 233$ below are both equitable with $SE(\alpha) = 2$ and $SE(\beta) = 1$, respectively.
$$\alpha = \tableau{&&&\ &\ \\ & \ &\ &\ \\\ &\ } \quad\quad \beta = \tableau{&&&&\ &\ \\&&\ &\ &\ \\\ &\ &\ }$$
\end{example}

\subsection{Skew Schur functions and ribbon Schur functions}\label{subsec:ssfandrsf}
Now that we have introduced skew shapes and ribbons, we can use them to define our algebraic focus, namely skew Schur functions. But before this we need to introduce tableaux.

Let $\lambda/\mu$ be a skew shape. Then a \emph{semistandard Young tableaux (SSYT)} $T$ of \emph{shape} $\lambda/\mu$, denoted by $sh(T)=\lambda / \mu$, is a filling of the cells of $\lambda/\mu$ with positive integers such that the entries in each row weakly increase when read from left to right, and the entries in each column strictly increase when read from top to bottom. The entry of cell $(i,j)$ in an SSYT $T$ is denoted by $T_{i,j}$ and the \emph{content} of $T$ is
$$c(T)=c_1(T)c_2(T)\cdots$$where $c_i(T)$ is the number of $i$'s appearing in $T$.

\begin{example}\label{ex:tableau}If $T=\tableau{&&&1&1&\\&1&2&2\\1&3}$ then $sh(T)=542/31$, $c(T)=421$, and $ T_{3,2} = 3$.
\end{example}

The \emph{skew Schur function} $s_{\lambda / \mu}$ in variables $\{x_1, x_2, \ldots\}$ is then defined to be 
$$s_{\lambda / \mu} = \sum _T x^{c(T)}$$where the sum is over all SSYTs $T$ with $sh(T) = \lambda / \mu$ and
$$x^{c(T)}=x_1 ^{c_1(T)}x_2^{c_2(T)}\cdots .$$For example, $x^{c(T)}= x_1^4x_2^2x_3$ for the above $T$.  Two types of skew Schur functions will be of particular interest to us. The first is when $\lambda / \mu$ is a ribbon corresponding to some composition $\alpha$, then we define the \emph{ribbon Schur function} $r_\alpha$ to be the skew Schur function $s_{\lambda / \mu}$. The second of these is when $\lambda / \mu$ has $\mu = \emptyset$, then we define the \emph{Schur function} $s_\lambda$ to be the skew Schur function $s_{\lambda / \mu}$. Schur functions are of particular interest since they form a basis for the algebra of symmetric functions to which skew Schur functions belong, and hence every skew Schur function can be written as a linear combination of Schur functions. To see exactly how we require Littlewood-Richardson tableaux. We say that a tableau is a \emph{Littlewood-Richardson (LR) tableau} if, as we read the entries from right to left along each row, taking the rows from top to bottom, the number of $i$'s we have read is always weakly greater than the number of $(i+1)$'s we have read, for all $i\geq 1$. This is known as the \emph{lattice word condition}. See Example~\ref{ex:tableau} for an example of an LR tableau. Note that the lattice word condition guarantees that every cell in the top row of an LR tableau is filled with a 1. This seemingly innocuous consequence will play a powerful role later.

\begin{theorem}[Littlewood-Richardson rule]\label{thm:LRrule}  \cite{LiRi34, Sch77, Tho78}  Let $\lambda/\mu$ be a skew shape. Then 
$$s_{\lambda / \mu} = \sum _\nu c ^\lambda _{\mu\nu} s_\nu$$where $c ^\lambda _{\mu\nu}$ is the number of LR tableaux $T$ satisfying $sh(T)= \lambda / \mu$ and $c(T) = \nu$.
\end{theorem}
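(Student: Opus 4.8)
The plan is to establish the rule combinatorially via Sch\"utzenberger's \emph{jeu de taquin} together with the theory of Knuth (plactic) equivalence. The first thing to pin down is why the expansion even makes sense: I would record that $s_{\lambda/\mu}$ is a symmetric function, which follows from the Bender--Knuth involutions that, for each $i$, swap the numbers of $i$'s and $(i+1)$'s in an SSYT while preserving its shape, so that $x^{c(T)}$ is invariant under the transposition $x_i \leftrightarrow x_{i+1}$ after summing. Since the $s_\nu$ form a basis of the symmetric functions, there are unique integers $c^\lambda_{\mu\nu}$ with $s_{\lambda/\mu} = \sum_\nu c^\lambda_{\mu\nu} s_\nu$, and the entire content of the theorem is the identification of $c^\lambda_{\mu\nu}$ with the number of LR tableaux of shape $\lambda/\mu$ and content $\nu$.

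Next I would set up rectification. Introduce inner slides on skew SSYT and define $\operatorname{rect}(T)$, the straight-shape tableau obtained by sliding all cells toward the corner; slides preserve content, so $\operatorname{rect}(T)$ has the same content as $T$. Encoding each tableau by its reading word, the key structural input is that $\operatorname{rect}(T)$ is well-defined (independent of the sequence of slides), that two tableaux have equal rectification exactly when their reading words are Knuth-equivalent, and that each Knuth class contains a unique straight-shape tableau. With this in hand I would group the defining sum $s_{\lambda/\mu} = \sum_T x^{c(T)}$ according to $\operatorname{rect}(T)$: for a fixed straight shape $\nu$, the skew SSYT of shape $\lambda/\mu$ that rectify to a single fixed target $U$ of shape $\nu$ number the same for every choice of $U$ of that shape, and letting $U$ range over all SSYT of shape $\nu$ reassembles $s_\nu$. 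Consequently $c^\lambda_{\mu\nu}$ equals the number of skew SSYT of shape $\lambda/\mu$ that rectify to any one fixed SSYT of shape $\nu$.

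Finally I would make the canonical choice of target: the superstandard (Yamanouchi) tableau $Y_\nu$ whose $i$-th row is filled entirely with $i$'s, which is the unique straight-shape tableau whose reading word is a lattice word. A skew SSYT rectifies to $Y_\nu$ precisely when its own reading word is a lattice word of content $\nu$, since the lattice property of the reading word is a Knuth-class invariant detecting exactly this class; but a tableau whose reading word is a lattice word of content $\nu$ is exactly an LR tableau of content $\nu$ in the sense of the excerpt. This identifies $c^\lambda_{\mu\nu}$ with the LR count and completes the argument.

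The main obstacle is the combinatorial core underlying the second paragraph: the confluence of jeu de taquin, equivalently the uniqueness of the straight tableau in each Knuth class, which is precisely the plactic-monoid machinery and is where essentially all the work lies. A secondary subtlety, which I would not want to gloss over, is verifying that the number of skew tableaux of shape $\lambda/\mu$ rectifying to a fixed target $U$ genuinely depends only on the \emph{shape} of $U$ and not on $U$ itself; this independence is what licenses the regrouping into a clean multiple of $s_\nu$, and it is most naturally obtained from the symmetry afforded by reverse slides acting on the target.
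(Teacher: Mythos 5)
The paper itself offers no proof of this statement: it is quoted as a classical theorem with citations to Littlewood--Richardson, Sch\"{u}tzenberger and Thomas, so there is no in-paper argument to compare against. Your proposal is a correct outline of the standard Sch\"{u}tzenberger proof via jeu de taquin---essentially the content of the cited reference \cite{Sch77}: the Bender--Knuth involutions justify symmetry and hence the existence of the expansion, rectification and Knuth equivalence organize the skew SSYT of shape $\lambda/\mu$ into classes whose cardinality depends only on the shape of the rectification, and taking the Yamanouchi tableau as the canonical target of shape $\nu$ converts that multiplicity into the lattice-word count, i.e.\ the number of LR tableaux. You also correctly identify the two places where essentially all of the work lies and which a complete write-up would have to prove rather than assert: the confluence of jeu de taquin (equivalently, the uniqueness of the straight-shape representative in each Knuth class), and the fact that the number of skew tableaux rectifying to a fixed target $U$ depends only on $sh(U)$ and not on $U$. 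One convention check worth making explicit: the paper's lattice-word condition reads each row from right to left, taking rows top to bottom, so the Knuth-invariance lemma must be formulated for that reverse reading word; with that convention the unique straight-shape tableau whose reading word is a lattice word of content $\nu$ is indeed the tableau whose $i$-th row is filled with $i$'s, exactly as your final paragraph uses.
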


It is well-known that the LR tableau obtained by filling the $i$-th cell in each column with $i$ yields the lexicographically largest $\nu$ such that $ c^\lambda _{\mu\nu} \neq 0$, and moreover $ c^\lambda _{\mu\nu} = 1$. The Littlewood-Richardson rule also guarantees us that skew Schur functions are \emph{Schur-positive}, that is, when expanded as a linear combination of Schur functions all the coefficients are nonnegative, and this leads us to our key definition.

\begin{definition}\label{def:spositive} Given symmetric functions $F$ and $G$ we say that
$$F\geq _s G$$if $F-G$ is Schur-positive.
\end{definition}
We will now use this to define a poset on skew shapes. Given a skew shape $\lambda/\mu$ let $[\lambda / \mu]$ be the equivalence class consisting of all skew shapes whose skew Schur function is equal to $s_{\lambda / \mu}$. Then say 
$$[\lambda / \mu] \geq _s [\nu / \rho]$$if $s_{\lambda / \mu} \geq _s s_{\nu / \rho}$. Since, by definition, $s_{\lambda / \mu}$ is homogeneous of degree $N$, where $N$ is the number of cells in $\lambda/ \mu$ it follows that $[\lambda / \mu]$ and  $[\nu / \rho]$ will be incomparable unless $\lambda / \mu$ and $\nu / \rho$ have the same number of cells, $N$. Hence now consider all the equivalence classes $[\lambda / \mu]$ where $\lambda/\mu$ has $N$ cells, and let $\mathcal{P}_N$ be the poset whose elements are these equivalence classes, with order relation $\leq _s$.

In general little is known about the equivalence class $[\lambda / \mu]$ though special cases have been considered, for example  \cite{Gut09, vWi05}, and in particular the elements of $[\lambda / \mu]$ have been completely determined when  $\lambda / \mu$ is a ribbon \cite[Theorem 4.1]{btvw}.

\begin{theorem}\label{thm:hdl} \cite[Theorem 4.1]{btvw} Two compositions $\alpha$ and $\beta$ satisfy $r_\alpha = r_\beta$ if and only if for some $k$ we can decompose
$$\alpha = \alpha ^1 \circ \cdots \circ \alpha ^k \mbox{ and } \beta = \beta ^1 \circ \cdots \circ \beta ^k$$where for each $i$, $1\leq i\leq k$, either $\beta ^i = \alpha ^i$ or $\beta ^i = (\alpha ^i)^\ast$. In particular, $r_\alpha = r_ {\alpha ^\ast}$.
\end{theorem}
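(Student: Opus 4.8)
My plan rests on two facts about ribbon Schur functions. The first is \emph{reversal invariance}, $r_\alpha = r_{\alpha^\ast}$ for every composition $\alpha$; this is the base case of the ``if'' direction and is the standard invariance of a skew Schur function under $180^\circ$ rotation of its diagram, since rotating the ribbon $\alpha$ produces the ribbon $\alpha^\ast$ and induces a weight-compatible bijection on semistandard fillings (the generating functions being symmetric, the sum is unchanged). The second is the classical \emph{ribbon multiplication rule}
\[
r_\alpha r_\gamma = r_{\alpha\cdot\gamma} + r_{\alpha\odot\gamma},
\]
reflecting the two ways the bottom row of the ribbon of $\alpha$ can meet the top row of the ribbon of $\gamma$: sharing no column (concatenation) or exactly one column (near-concatenation).

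\textbf{The ``if'' direction.} Using associativity of $\circ$ together with the distribution law $(\alpha\circ\beta)^\ast = \alpha^\ast\circ\beta^\ast$ (both immediate from the definitions), the task of reversing an individual factor of $\gamma^1\circ\cdots\circ\gamma^k$ reduces to the following \emph{Key Lemma}: $r_{\alpha\circ\beta}=r_{\alpha\circ\beta^\ast}$ for all $\alpha,\beta$. For instance, to reverse a middle factor one groups the product as $L\circ(\gamma^i\circ R)$, applies the Key Lemma and the distribution law to reverse $\gamma^i$ (at the cost of also reversing $R$), and then applies the Key Lemma once more to undo the reversal of $R$. To prove the Key Lemma I would induct on $\ell(\alpha)$. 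Writing $B_i=\beta^{\odot\alpha_i}$ and iterating the multiplication rule across the $\ell(\alpha)-1$ junctions gives
\[
r_{B_1}r_{B_2}\cdots r_{B_{\ell(\alpha)}} \;=\; \sum_{\gamma} r_{\gamma\circ\beta},
\]
the sum being over all coarsenings $\gamma$ of $\alpha$ (a composition obtained by adding consecutive parts): a near-concatenation at a junction merges the two adjacent blocks, hence the two adjacent parts of $\alpha$, while a concatenation keeps them separate, and the all-concatenation term is exactly $\alpha\circ\beta$. Since $r_{B_i}=r_{(\beta^\ast)^{\odot\alpha_i}}$ by reversal invariance together with $(\beta^{\odot\alpha_i})^\ast=(\beta^\ast)^{\odot\alpha_i}$, the left-hand side is unchanged upon replacing $\beta$ by $\beta^\ast$, so $\sum_\gamma r_{\gamma\circ\beta}=\sum_\gamma r_{\gamma\circ\beta^\ast}$. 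Every coarsening $\gamma\neq\alpha$ has $\ell(\gamma)<\ell(\alpha)$, so the inductive hypothesis gives $r_{\gamma\circ\beta}=r_{\gamma\circ\beta^\ast}$ for all such $\gamma$; these terms cancel and leave $r_{\alpha\circ\beta}=r_{\alpha\circ\beta^\ast}$. The base case $\ell(\alpha)=1$ is reversal invariance applied to $\beta^{\odot\alpha_1}$.

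\textbf{The ``only if'' direction and the main obstacle.} For the converse I would pass to the basis of complete homogeneous symmetric functions $h_\lambda$. Expanding $r_\alpha=\sum_\gamma(-1)^{\ell(\alpha)-\ell(\gamma)}h_{\lambda(\gamma)}$ over the coarsenings $\gamma$ of $\alpha$ and using the linear independence of $\{h_\lambda\}$, the hypothesis $r_\alpha=r_\beta$ becomes the purely combinatorial statement that, for every partition $\lambda$, the signed number of coarsenings of $\alpha$ of content $\lambda$ equals that of $\beta$. The goal is then to reconstruct the $\circ$-factorization of $\alpha$ from this data up to reversal of factors. I would first establish that $\circ$ is cancellative and that every composition factors uniquely into $\circ$-irreducibles, and then induct on size: read off the first irreducible factor of $\alpha$ in a way that is canonical up to reversal, strip it off, and recurse. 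The hard part will be exactly this rigidity step, namely proving that the only coincidences among ribbon Schur functions are those forced by reversal invariance, so that inequivalent irreducible factorizations can never yield equal ribbon Schur functions. This is where essentially all of the combinatorial work lies, since one must exclude accidental equalities among the signed coarsening counts; I expect to need a careful analysis of the extreme (finest and coarsest) compositions contributing to a fixed $h_\lambda$ in order to pin down the first factor.
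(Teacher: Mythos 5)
The paper does not prove this theorem; it quotes it as \cite[Theorem 4.1]{btvw}, so there is no internal proof to compare against and your proposal must stand on its own. Its first half does: the ``if'' direction is correct and complete. Reversal invariance, the multiplication rule $r_\alpha r_\gamma=r_{\alpha\cdot\gamma}+r_{\alpha\odot\gamma}$ (Lemma~\ref{lem:connearcon} in this paper), the identity $(\alpha\circ\beta)^\ast=\alpha^\ast\circ\beta^\ast$, and the expansion $r_{\beta^{\odot\alpha_1}}\cdots r_{\beta^{\odot\alpha_{\ell(\alpha)}}}=\sum_{\gamma}r_{\gamma\circ\beta}$ over coarsenings $\gamma$ of $\alpha$ all check out; every proper coarsening has strictly smaller length, so your inductive cancellation legitimately yields the Key Lemma $r_{\alpha\circ\beta}=r_{\alpha\circ\beta^\ast}$, and the grouping trick $L\circ(\gamma^i\circ R)$ correctly reverses one factor at a time. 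This is essentially the standard sufficiency argument.

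The genuine gap is the ``only if'' direction, which you acknowledge but do not close. Passing to the complete homogeneous basis and reducing $r_\alpha=r_\beta$ to equality of signed coarsening multisets is fine (this is exactly Theorem~\ref{thm:jtcoarse}), but everything after that is a plan rather than a proof: cancellativity of $\circ$, unique factorization into $\circ$-irreducibles, and above all the rigidity statement that equal signed coarsening data forces the irreducible factors to agree up to reversal are all stated as goals, with the last one explicitly flagged as ``where essentially all of the combinatorial work lies.'' That rigidity step is not a routine verification: it is the actual content of \cite[Theorem 4.1]{btvw}, and establishing it (including ruling out accidental coincidences among the signed coarsening counts) occupies the bulk of that paper. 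As it stands, your proposal proves $r_\alpha=r_{\alpha^\ast}$ and the sufficiency of the factorization condition, but the characterization --- the direction that this paper actually relies on in Corollary~\ref{cor:ineq} and Lemma~\ref{lem:strict} to conclude that equitable ribbons are equal only to their reverses --- remains unproven.
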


As noted, $r_\alpha = r_ {\alpha ^\ast}$, and furthermore if $\alpha$ is an equitable ribbon then the following corollary confirms that this is all.

\begin{corollary} \label{cor:ineq}Let $\alpha$ be an equitable ribbon. Then the equivalence class of $\alpha$ only contains $\alpha$ and $\alpha ^\ast$.\end{corollary}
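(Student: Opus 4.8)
The plan is to apply Theorem~\ref{thm:hdl} and reduce the statement to a rigidity property of the factorizations of $\alpha$ under $\circ$. First I record the elementary reversal identity
$$(\gamma \circ \delta)^\ast = \gamma^\ast \circ \delta^\ast,$$
which follows straight from the definitions: reversing a concatenation reverses the order of the blocks, and $(\delta^{\odot m})^\ast = (\delta^\ast)^{\odot m}$. Iterating this along the parenthesization used in Theorem~\ref{thm:hdl} gives $(\alpha^1 \circ \cdots \circ \alpha^k)^\ast = (\alpha^1)^\ast \circ \cdots \circ (\alpha^k)^\ast$. Call $\gamma$ a \emph{palindrome} if $\gamma = \gamma^\ast$. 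Now suppose $r_\beta = r_\alpha$. By Theorem~\ref{thm:hdl} there are factorizations $\alpha = \alpha^1 \circ \cdots \circ \alpha^k$ and $\beta = \beta^1 \circ \cdots \circ \beta^k$ with $\beta^i \in \{\alpha^i, (\alpha^i)^\ast\}$. If I can show that \textbf{at most one factor $\alpha^i$ is not a palindrome}, the proof is done: for every palindromic factor the two choices of $\beta^i$ agree, so $\beta$ is determined by whether the unique non-palindromic factor $\alpha^{i_0}$ is reversed. Not reversing it gives $\beta = \alpha$; reversing it gives $\alpha^1 \circ \cdots \circ (\alpha^{i_0})^\ast \circ \cdots \circ \alpha^k$, which by the reversal identity equals $\alpha^\ast$ because reversing the one non-palindrome coincides with reversing all factors. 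Hence the entire content is the boxed claim, which I must establish for \emph{every} factorization of an equitable ribbon.

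To prove the claim I would pass to the partial-sum set $D(\gamma) \subseteq \{1, \ldots, |\gamma|-1\}$ of a composition, under which row-equitability says the gaps of $D(\alpha)$ lie in $\{a, a+1\}$ and column-equitability says the gaps of its complement lie in $\{b, b+1\}$. For $\alpha = \mu \circ \nu$, unwinding the definition of $\circ$ yields
$$D(\alpha) = \bigcup_{t=1}^{|\mu|} \big(D(\nu) + (t-1)|\nu|\big) \cup \{\, s|\nu| : s \in D(\mu) \,\},$$
so the rows of $\alpha$ are those of $\nu$ repeated, modified at copy boundaries: a boundary interior to a part of $\mu$ \emph{merges} $\nu_{\ell(\nu)}$ and $\nu_1$ into one row of length $\nu_{\ell(\nu)} + \nu_1$, while a boundary between parts of $\mu$ keeps them separate. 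Row-equitability forces every such merge to satisfy $\nu_{\ell(\nu)} + \nu_1 \in \{a, a+1\}$, and since both summands are themselves row lengths of $\alpha$, hence at least $a$, this is possible only when $a = 1$ and $\nu_1 = \nu_{\ell(\nu)} = 1$.

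This dichotomy organizes the argument. When $a \geq 2$ no merge is allowed, so the outermost factor $\mu = \alpha^1$ can have no part exceeding $1$; thus $\alpha^1 = 1^{\ell(\mu)}$, a palindrome, and $\alpha$ is a plain concatenation of copies of the repeated unit $\nu = \alpha^2 \circ \cdots \circ \alpha^k$. Verifying that $\nu$ inherits equitability (its rows are rows of $\alpha$, and its columns are truncations of columns of $\alpha$) lets me induct on size to conclude that $\alpha^2, \ldots, \alpha^k$ contribute at most one non-palindrome, giving at most one in total. The case $a = 1$ is the main obstacle, since merges are now permitted and a non-palindromic outer factor is not excluded by the row condition alone — for instance $12 \circ 1121$ is row-equitable yet has a column of length $5$, so column-equitability is doing essential work. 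Here I would use that transposition of ribbons realizes the involution $\omega$, preserving each equivalence class's size while interchanging $a$ and $b$; this lets me assume $a \geq 2$ whenever $\max(a,b) \geq 2$, reducing to the already-treated case. The sole remaining situation is $a = b = 1$, where the column bound forces every interior part of $\alpha$ to equal $2$ (an interior $1$ would place three consecutive rows in a common column), so that $\alpha$ is one of $2^n$, $1\,2^n$, $2^n\,1$, or $1\,2^n\,1$, and a direct inspection of these confirms the claim. I expect the genuine difficulty to be the column-run bookkeeping in the $a=1$ reduction and the verification that the repeated unit stays equitable, rather than the algebraic reduction of the first paragraph.
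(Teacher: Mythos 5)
Your proposal is correct, and its pivotal observation is exactly the paper's: writing $\alpha=\mu\circ\nu$, any part of $\mu$ exceeding $1$ creates a merged row $\nu_{\ell(\nu)}+\nu_1$ of $\alpha$, and since $\nu_1$ and $\nu_{\ell(\nu)}$ are themselves the end rows of $\alpha$, row-equitability forces $a=1$ and $\nu_1=\nu_{\ell(\nu)}=1$. Where you diverge is in the surrounding architecture. The paper proves the two-factor statement that $\alpha$ admits no factorization $\beta\circ\gamma$ with \emph{both} factors nonsymmetric: nonsymmetry of $\beta$ means $\beta$ is not all $1$'s, so a merge exists automatically and the row argument instantly yields $\alpha_1=\alpha_{\ell(\alpha)}=1$; then the $\omega$ involution reruns the identical argument on columns, so the end columns of $\alpha$ also have length $1$, and together these force $\alpha$ to be a single cell, an immediate contradiction, with no induction, no case split on $a$, and no classification. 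You instead prove the stronger-looking claim that \emph{every} factorization $\alpha^1\circ\cdots\circ\alpha^k$ has at most one non-palindromic factor, which obliges you to treat precisely the situation the paper's nonsymmetry hypothesis excludes, namely an all-$1$'s outer factor (plain concatenation); hence your size induction for $a\geq 2$, your transpose swap $a\leftrightarrow b$ when $a=1<b$, and your terminal classification $2^n$, $12^n$, $2^n1$, $12^n1$ when $a=b=1$. Each route buys something: your reduction from Theorem~\ref{thm:hdl} is fully explicit, whereas the paper's implicit reduction additionally needs associativity of $\circ$ and the fact that a $\circ$-product with a nonsymmetric factor is nonsymmetric, in order to group a long factorization into two nonsymmetric blocks; on the other hand, the paper's proof is far shorter because assuming both factors nonsymmetric lets one uniform contradiction close every case, and indeed grouping factors would reduce your boxed claim to the paper's two-factor statement and collapse your three cases into its five lines. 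Two small repairs when you write yours up: handle trivial factors $\alpha^i=(1)$, where your size induction does not descend, and note that for $a\geq 2$ the unit $\nu$ is row-equitable with the same $a$ and hence automatically equitable by the remark in Section~\ref{subsec:ribbons}, so the column-truncation verification is unnecessary.
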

 
 \begin{proof} We show that $\alpha=\alpha_1\cdots\alpha_{\ell(\alpha)}$ does not factorize as a composition $\beta\circ \gamma$ of two nonsymmetric compositions $\beta$ and $\gamma$. Suppose that it does. We know that the row lengths of $\alpha$ are all either $a$ or $(a+1)$ for some $a$. Now because $\alpha$ is a concatenation of near-concatenations of $\gamma$, we must have $\gamma_1=\alpha_1\in\{a,a+1\}$ and $\gamma_{\ell(\gamma)}=\alpha_{\ell(\alpha)}\in\{a,a+1\}$. Also, since $\beta$ is nonsymmetric and hence not all $1$'s there will be some near-concatenation, so $\gamma_1+\gamma_{\ell(\gamma)}\in\{a,a+1\}$; the only possibility is if $$\gamma_1=\alpha_1=\gamma_{\ell(\gamma)}=\alpha_{\ell(\alpha)}=a=1,\hbox{ so that }\gamma_1+\gamma_{\ell(\gamma)}=a+1=2.$$ In particular, the first row of $\alpha$ has length $1$. Now by applying the $\omega$ involution $\omega (s_{\lambda / \mu}) = s_{(\lambda / \mu)^t}$, we can use the same argument to show that the first column of $\alpha$ has length $1$. However, now $\alpha$ is just a single cell, so can not factorize as described.\end{proof}
 
 Returning to $\mathcal{P}_N$ it is straightforward to verify certain properties of it, such as by the Littlewod-Richardson rule $\mathcal{P}_N$ has a unique maximal element consisting of $N$ disconnected components each containing a single cell. Therefore, the more interesting question is what is a maximal element among connected skew shapes? If we restrict our attention to connected skew shapes then it was shown \cite[Corollary 3.6]{maxsupport} that the skew shape must be an equitable ribbon, but the question still remains, which one? It is this question that will serve as the impetus of our paper, and hence before we continue we will make two reductions in order to simplify our study.
 
 \textbf{Reduction 1:} This reduction is notational and we will henceforth identify the equivalence class $[\alpha]$ with its representative $\alpha$. This is because by Corollary~\ref{cor:ineq} the equivalence class of an equitable ribbon $\alpha$ only contains $\alpha$ and $\alpha ^\ast$.
 
 \textbf{Reduction 2:} This reduction fixes the number of rows of any ribbon we consider to some integer $R$. This is because by \cite[Lemma 3.8]{maxsupport} two ribbons, and hence two equitable ribbons, are incomparable in $\mathcal{P}_N$ if they have a different number of rows.
 
Hence we will focus our attention on the convex subposet of $\mathcal{P}_N$, which we will denote by $\mathcal{R}((a+1)^na^m)$, which consists of all equitable ribbons with $n$ rows of length $(a+1)$ and $m$ rows of length $a$. By the involution $\omega (s_{\lambda / \mu}) = s_{(\lambda / \mu)^t}$ it additionally suffices to consider only $a\geq 2$. However, many of our results intriguingly hold when $a=1$, and hence we include these results for completeness. With these reductions in place we now define the box diagonal diagram, which is crucial to the question of maximal connected skew shapes in $\mathcal{P}_N$.

\begin{definition}\label{def:bd} The \emph{box diagonal diagram} $\bd$ is the skew shape consisting of $R$ rows, $S$ columns and $N = R+S-1$ cells constructed as follows. Draw a grid $R$ cells high and $S$ cells wide, and a line $L$ given by $y= \frac{R}{S} x$ from the bottom left corner to the top right corner. Then $\bd$ consists of the cells whose interior or top left corner point is intercepted by $L$.
\end{definition}

\begin{example}\label{ex:bd} The box diagonal diagrams $P_{5,7}$ and $P_{3,6}$ are indicated below by the shaded blue and green cells. Note that $P_{3,6}$ illustrates the ``top left corner point'' phrase of the definition indicated by the green cells.
\begin{figure}[h]
\includegraphics[width=0.6\textwidth]{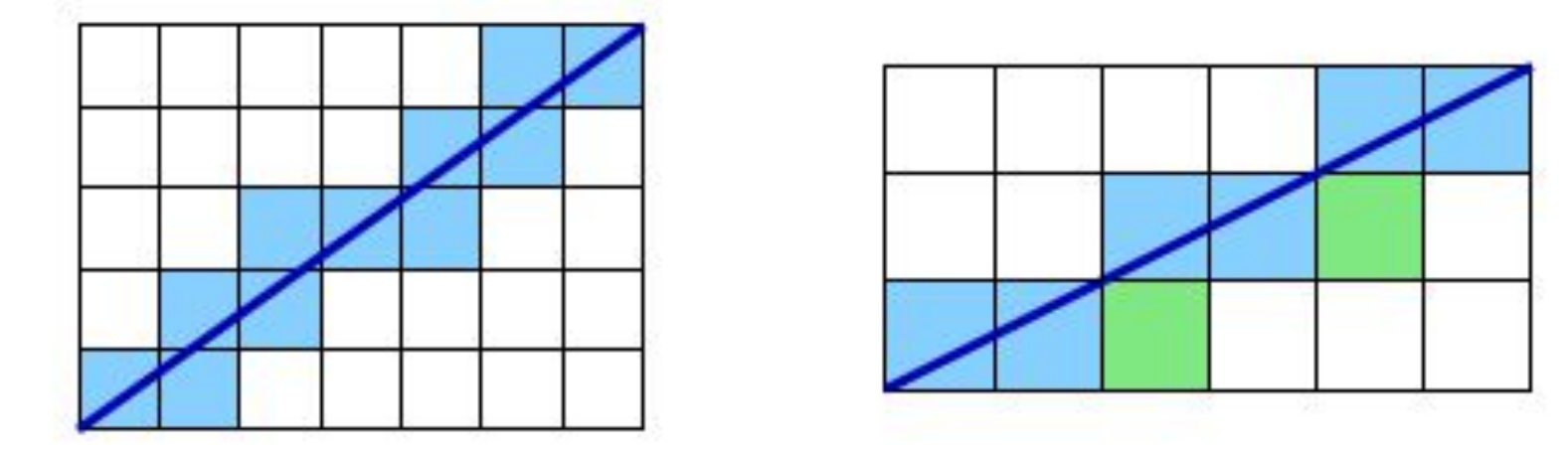}
\label{fig:bd}
\end{figure}
\end{example}

We can now conjecture which connected skew shapes are maximal in $\mathcal{P}_N$. This conjecture is due to McNamara and Pylyavskyy and is stated in \cite[Conjecture 1.3]{maxsupport}.

\begin{conjecture}\label{conj:maxelP} \cite[Conjecture 1.3]{maxsupport} In the subposet of $\mathcal{P}_N$ consisting of connected skew shapes, there are exactly $N$ maximal elements given by $P_{R, N-R+1}$ for $R=1, \ldots , N$.
\end{conjecture}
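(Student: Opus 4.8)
The plan is to reduce the conjecture to a statement about a single subposet $\mathcal{R}((a+1)^n a^m)$ and then to identify $\bd$ as its unique maximum. By \cite[Corollary 3.6]{maxsupport} every maximal element among connected skew shapes is an equitable ribbon, and by Reduction~2 two equitable ribbons with different numbers of rows are incomparable in $\mathcal{P}_N$. Fixing the number of rows $R$ then determines $a$, the number $n$ of long rows, and the number $m$ of short rows through $m+n=R$ and $ma+n(a+1)=N$. Since $\mathcal{P}_N$ is finite, every connected skew shape with $R$ rows lies weakly below some maximal element, which must be an equitable ribbon with $R$ rows; hence it suffices to prove that each $\mathcal{R}((a+1)^n a^m)$ has a unique maximum and that this maximum is $\bd$. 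As $R$ ranges over $1,\ldots,N$ these classes are pairwise incomparable, so this immediately yields exactly the $N$ claimed maximal elements.

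First I would show that $\bd$ is the equitable ribbon whose long rows are distributed as evenly as possible, with short rows at both ends. Reading the row lengths off the line $y=\frac{R}{S}x$ in Definition~\ref{def:bd}, consecutive long rows are separated as uniformly as the counts $n$ and $m$ allow, and the first and last rows are short. This is exactly the configuration singled out by the necessary conditions: Theorem~\ref{thm:shortends} forces the end rows of a larger equitable ribbon to be short, and Theorem~\ref{thm:smalls}, via the quasi-profile, forces its long rows to be more evenly distributed. Together these should pin down $\bd$ as the only equitable ribbon surviving every necessary condition, so that no equitable ribbon other than $\bd$ can be maximal in $\mathcal{R}((a+1)^n a^m)$.

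It then remains to show that $\bd$ actually dominates, that is $\bd\geq_s\alpha$ for every $\alpha\in\mathcal{R}((a+1)^n a^m)$. Here I would use the sufficient conditions as balancing moves: Theorem~\ref{thm:celltr} exchanges two rows and Theorems~\ref{thm:moving1} and \ref{thm:moving1cor} move cells from the top row, each producing a Schur-positive difference. The goal is to realize an arbitrary $\alpha$ as the bottom of a saturated chain of such moves whose top is $\bd$, so that transitivity of $\geq_s$ gives $\bd\geq_s\alpha$. When the comparable equitable ribbons already form a chain the total order forces $\bd$ to the top and the argument closes at once; this is precisely the case settled in Theorem~\ref{thm:chains} and Corollary~\ref{cor:shortends}.

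The main obstacle is the failure of $\mathcal{R}((a+1)^n a^m)$ to be a chain in general. Corollary~\ref{cor:incompar} shows that outside the chain cases there are genuinely incomparable equitable ribbons, and the diagrammatic operations above only furnish certain cover relations; there is no guarantee that the move-graph admits a monotone path from an arbitrary $\alpha$ up to $\bd$, nor that $\bd$ is its unique sink. For an $\alpha$ that the moving-cell and cell-transfer operations cannot connect to $\bd$, one would instead need a direct proof that $r_{\bd}-r_\alpha$ is Schur-positive, presumably by exhibiting a content-preserving injection from the LR tableaux counted by $r_\alpha$ into those counted by $r_{\bd}$. Producing such an injection uniformly over all incomparable $\alpha$ is the step I expect to be hardest, and it is the reason the conjecture remains open beyond the chain case.
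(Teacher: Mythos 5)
You are attempting to prove Conjecture~\ref{conj:maxelP}, which the paper itself does not prove: it only confirms special cases, namely the chain posets (Theorem~\ref{thm:chains}, Corollary~\ref{cor:shortends}) and the existence (not uniqueness) of the conjectured maximal element when $m=d(n+1)$ (Corollary~\ref{cor:even}). Your proposal reproduces exactly this program --- reduce to equitable ribbons with a fixed number of rows via \cite[Corollary 3.6]{maxsupport} and Reduction~2, use Theorems~\ref{thm:moving1}, \ref{thm:moving1cor} and \ref{thm:celltr} as upward moves, and invoke Theorems~\ref{thm:shortends} and \ref{thm:smalls} as constraints --- and your final paragraph concedes that the essential step (connecting an arbitrary $\alpha$ to the box diagonal diagram when $\mathcal{R}((a+1)^na^m)$ is not a chain) cannot be carried out. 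So this is a research plan, not a proof, and it coincides with the plan the paper already executes as far as it can be executed. (A smaller unjustified step: for a connected non-ribbon skew shape $B$ with $R$ rows, Reduction~2 applies only to ribbons, so it does not follow that the maximal element above $B$ has $R$ rows.)

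Beyond the gap you admit, your second paragraph contains a genuine logical error. Theorems~\ref{thm:shortends} and \ref{thm:smalls} are necessary conditions for \emph{comparability}; they can show that nothing lies strictly above $\bd$, hence that $\bd$ is maximal, but they can never show that another ribbon fails to be maximal, since an incomparable ribbon is itself a candidate maximal element. This is precisely why Corollary~\ref{cor:even} asserts only that its ribbon is \emph{a} maximal element and why uniqueness remains open outside the chains. Moreover, even your weaker claim --- that the two necessary conditions single out the box diagonal diagram as the unique survivor --- is false. In $\mathcal{R}(5^24^4)$ the box diagonal diagram is $P_{6,21}=445454$, with profile $211$, while the symmetric ribbon $454454$, with profile $121$, also has two short ends and the same quasi-profile $021$; they are not reverses of each other, so by Corollary~\ref{cor:ineq} they are distinct elements of the poset, and neither Theorem~\ref{thm:shortends} nor Theorem~\ref{thm:smalls} distinguishes them. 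To rule out $454454$ as a second maximal element one must prove that $r_{445454}-r_{454454}$ is Schur-positive, and producing such arguments uniformly is exactly the content of the still-open conjecture.
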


As we will show in Corollary~\ref{cor:eqrib}, $\bd$ is an equitable ribbon and hence we obtain the following conjecture for $\mathcal{R}((a+1)^na^m)$, which we prove in certain cases in Theorem~\ref{thm:chains} and Corollary~\ref{cor:shortends}, and confirm up to uniqueness in Corollary~\ref{cor:even}.

\begin{conjecture}\label{conj:maxel} In the poset $\mathcal{R}((a+1)^na^m)$ there is exactly one maximal element given by
$$P_{n+m, (n+m)a-m+1}.$$
\end{conjecture}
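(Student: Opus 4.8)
The plan is to upgrade ``maximal'' to ``greatest'': since $\mathcal{R}((a+1)^na^m)$ is finite, a unique maximal element coincides with a greatest element, so it suffices to prove the one-sided domination $r_{P_{R,S}} \geq_s r_\alpha$ for every $\alpha \in \mathcal{R}((a+1)^na^m)$, where $R = n+m$ and $S = (n+m)a - m + 1$. First I would check that $P_{R,S}$ genuinely belongs to the poset: every such $\alpha$ has $N = (n+m)a + n$ cells and $R = n+m$ rows, so the identity $R + S = N + 1$ forces $S = (n+m)a - m + 1$, and by Corollary~\ref{cor:eqrib} the diagram $P_{R,S}$ is itself an equitable ribbon with exactly $n$ long rows and $m$ short rows. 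Thus $P_{R,S} = P_{n+m,(n+m)a-m+1}$ is a bona fide element of $\mathcal{R}((a+1)^na^m)$, and the whole conjecture reduces to the domination statement.

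For the domination I would deploy the two diagrammatic operations built in Section~\ref{sec:tools}. Theorems~\ref{thm:moving1} and \ref{thm:moving1cor} move cells out of the top row, while Theorem~\ref{thm:celltr} exchanges two rows, and each replaces a ribbon by a Schur-larger one. The strategy is to show that from an arbitrary starting $\alpha$ one can reach $P_{R,S}$ by a finite sequence of such moves, each weakly increasing in $\geq_s$, so that transitivity delivers $r_{P_{R,S}} \geq_s r_\alpha$. The target is pinned down structurally by the two necessary conditions established later: $P_{R,S}$ is the equitable ribbon whose short rows occupy both ends (Theorem~\ref{thm:shortends}) and whose long rows are distributed as evenly as possible (Theorem~\ref{thm:smalls}). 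These results both identify $P_{R,S}$ as the only plausible top and indicate in which ``direction'' every move should push: the row-exchange operation should be used to even out the long rows, and the cell-moving operations to force short rows toward the ends.

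The hard part will be guaranteeing that such a monotone path to $P_{R,S}$ always exists. The moving and exchange operations yield only \emph{sufficient} conditions for Schur-positivity, and if the ribbons one must pass through are mutually incomparable the climbing argument stalls. This is exactly why a complete resolution is available only when the comparable equitable ribbons form a chain (Theorem~\ref{thm:chains}), or when the short-ends leverage of Corollary~\ref{cor:shortends} applies, and why Corollary~\ref{cor:even} confirms the statement only up to uniqueness, establishing that $P_{R,S}$ is maximal without ruling out other maximal elements. To close the remaining gap in full generality I would instead attempt a direct Littlewood--Richardson argument: by Theorem~\ref{thm:LRrule} it suffices to construct, for each partition $\nu$, a content-preserving injection from the LR tableaux of ribbon shape $\alpha$ into those of ribbon shape $P_{R,S}$, since such injections give the required coefficient inequalities term by term and hence the Schur-positivity of $r_{P_{R,S}} - r_\alpha$.
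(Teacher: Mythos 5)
The statement you set out to prove is Conjecture~\ref{conj:maxel}, which the paper itself leaves open: the authors confirm it only for the four chains of Theorem~\ref{thm:chains}, for $\mathcal{R}((a+1)^na^2)$ in Corollary~\ref{cor:shortends}, and only \emph{up to uniqueness} (maximality without greatestness) for $m=d(n+1)$ in Corollary~\ref{cor:even}. Your preliminary reductions are sound: in a finite poset an element dominating all others is the unique maximal element, and the cell count $R+S=N+1$ together with Lemma~\ref{lem:eqrib} and Corollary~\ref{cor:eqrib} does place $P_{n+m,(n+m)a-m+1}$ inside $\mathcal{R}((a+1)^na^m)$. But the entire mathematical content of the conjecture is the domination $r_{P_{R,S}}\geq_s r_\alpha$ for every $\alpha$, and at exactly this point the proposal stops being a proof. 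The climbing strategy is explicitly left unfinished (``the hard part will be guaranteeing that such a monotone path \ldots always exists''), and it cannot be finished with the paper's tools: Theorems~\ref{thm:moving1}, \ref{thm:moving1cor} and \ref{thm:celltr} give only sufficient conditions, and Corollary~\ref{cor:incompar} exhibits genuinely incomparable pairs, so a greedy ascent can stall. The fallback --- constructing, for each $\nu$, a content-preserving injection from LR tableaux of shape $\alpha$ into those of shape $P_{R,S}$ --- is not an argument but a restatement of what Schur-positivity would mean; no injection is defined, and defining one for arbitrary $\alpha$ is precisely the open problem.

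A secondary overstatement: Theorems~\ref{thm:shortends} and \ref{thm:smalls} do \emph{not} identify $P_{R,S}$ as ``the only plausible top'' in general. They are necessary conditions on comparable pairs, and the pair of invariants $(SE(\alpha),q(\alpha))$ need not have a unique optimizer; for instance in $\mathcal{R}(5^24^4)$ both $445454$ and $454454$ have two short ends and the same lexicographically least quasi-profile, so these results cannot distinguish the candidates, let alone certify greatestness. In short, your proposal correctly assembles the reductions and accurately surveys the partial results, but the gap it leaves --- producing the Schur-positivity of $r_{P_{R,S}}-r_\alpha$ for all $\alpha$ --- coincides with the conjecture itself, which remains unproven in the paper.
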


In \cite[Section 5.3]{maxsupport} they also conjectured a unique minimal element, as follows. 

\begin{conjecture}\label{conj:minel} \cite[Section 5.3]{maxsupport} In the poset $\mathcal{R}((a+1)^na^m)$ there is exactly one minimal element given by
$$(a+1)^{\lceil{\frac{n}{2}}\rceil} a^m (a+1)^{\lfloor{\frac{n}{2}}\rfloor}.$$
\end{conjecture}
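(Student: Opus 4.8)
The plan is to prove the statement by establishing that the ribbon $\delta := (a+1)^{\lceil n/2\rceil} a^m (a+1)^{\lfloor n/2\rfloor}$ is in fact the \emph{least} element of $\mathcal{R}((a+1)^n a^m)$. Since this poset is finite (by Corollary~\ref{cor:ineq} each equivalence class of an equitable ribbon contains only the ribbon and its reversal, and there are finitely many ribbons with $n$ long and $m$ short rows), having a \emph{unique} minimal element is equivalent to having a minimum. So it suffices to prove $\beta \geq_s \delta$ for every $\beta \in \mathcal{R}((a+1)^n a^m)$, and I would organize this as: (1) $\delta$ is minimal, and (2) every $\beta \neq \delta$ is non-minimal.

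Step (1) I would dispatch directly from Theorem~\ref{thm:shortends}: any equitable ribbon that is strictly Schur-larger than another must have both end rows short, that is $SE = 2$. For $n\geq 2$ the ribbon $\delta$ has $(a+1)$ as both its first and last row, so $SE(\delta)=0$; when $n=1$ we have $SE(\delta)=1$, and $n=0$ leaves the single ribbon $a^m$. In every case $SE(\delta) < 2$, so $\delta$ can never be the larger term of a strict comparison, no ribbon lies strictly below it, and $\delta$ is minimal.

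For step (2) I would use the two sufficient operations of Section~\ref{sec:tools} to descend from an arbitrary $\beta$ down to $\delta$. From $\beta$ I would produce a strictly smaller ribbon by viewing $\beta$ as the larger output of a cell transfer (Theorem~\ref{thm:celltr}) or of a top-row move (Theorems~\ref{thm:moving1} and \ref{thm:moving1cor}) and reading off the smaller input; iterating and invoking transitivity of $\geq_s$ then gives $\beta\geq_s\delta$. The descent proceeds in two stages: first, drive every long row to one of the two ends, swapping each interior long row past the short rows separating it from the nearer end, so that $\beta$ is reduced to some $(a+1)^j a^m (a+1)^{n-j}$ with the short rows consolidated into a single middle block; second, rebalance the two end-blocks of long rows toward sizes $\lceil n/2\rceil$ and $\lfloor n/2\rfloor$. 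A monovariant measuring how far the long-row positions are from those in $\delta$ should strictly decrease at each step, guaranteeing termination.

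The hard part will be twofold. First, each elementary move must be matched exactly to the hypotheses of Theorem~\ref{thm:celltr} or Theorems~\ref{thm:moving1} and \ref{thm:moving1cor}, and I must verify that the chosen row exchange or cell move genuinely points in the Schur-decreasing direction at every stage; since those theorems carry specific preconditions on which rows may be exchanged, checking applicability throughout the descent is the principal technical burden. Second, and I expect this to be the real obstacle, the balancing of the two end-blocks to the split $\lceil n/2\rceil / \lfloor n/2\rfloor$ is the most delicate point: crude adjacent row swaps do not obviously separate a balanced split from an unbalanced one, so I anticipate needing the finer \emph{quasi-profile} invariant of Theorem~\ref{thm:smalls}, which measures how evenly the long rows are distributed, to force the balanced configuration and thereby pin $\delta$ down uniquely. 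Finally I would treat the degenerate cases $n\in\{0,1\}$ and $a=1$ separately.
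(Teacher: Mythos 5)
Your Step (1) rests on a misreading of Theorem~\ref{thm:shortends}. That theorem says only that if $\ra\geq_s\rb$ then $SE(\alpha)\geq SE(\beta)$; it does not say that a strictly Schur-larger equitable ribbon must have $SE=2$. The paper's own chains refute your version: in Chain~\eqref{chain:1a} with $n\geq 4$ one has $r_{(a+1)a(a+1)^{n-1}}>_s r_{(a+1)^2a(a+1)^{n-2}}$, and the strictly larger ribbon has $SE=0$. Read correctly, supposing $r_\delta>_s r_\beta$ yields only $SE(\beta)\leq SE(\delta)=0$, which is no contradiction. Worse, both of the paper's necessary conditions are blind exactly where minimality is at stake: every ribbon $(a+1)^{n-t}a^m(a+1)^t$ with $1\leq t<\lfloor\frac{n}{2}\rfloor$ has $SE=0$ and the \emph{same} quasi-profile $n0^{m-2}1$ as $\delta$, so neither Theorem~\ref{thm:shortends} nor Theorem~\ref{thm:smalls} can exclude $r_\delta>_s r_\beta$ for such $\beta$. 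This is precisely why the paper's proof of Theorem~\ref{thm:minimal}, after using Theorem~\ref{thm:smalls} to force $\beta=(a+1)^{n-t}a^m(a+1)^t$ and Theorem~\ref{thm:shortends} to force $t\geq 1$, must do genuinely new work: it counts multiplicities of the finer coarsenings $\mu_k$, $\mu'_k=(2a+2)^k(2a+1)(a+1)^{n-2k-1}a^{m-1}$, and $\mu''_k$ via Theorem~\ref{thm:ftom}, and shows the lexicographically least term of the complete homogeneous expansion of $r_\delta-r_\beta$ is $-h_{\mu'_t}$, contradicting Schur-positivity by Corollary~\ref{cor:observation}. Nothing in your proposal supplies this step, and without it minimality is not established.

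Your Step (2) targets the uniqueness clause, equivalently (in this finite poset) that $\delta$ is the minimum; but this is exactly the part the paper does \emph{not} prove -- Theorem~\ref{thm:minimal} establishes only that $\delta$ is \emph{a} minimal element -- and your proposed descent cannot be carried out with the stated tools. The Schur-decreasing direction of Theorem~\ref{thm:celltr} turns a short-long pair into a long-short pair, i.e.\ it moves long rows upward; your rebalancing stage requires moving long rows downward past the block of $a$'s, and there the hypotheses fail outright: to swap inside $(a+1)^{n-j}a^m(a+1)^j$ at the junction one takes $\delta'=(a+1)^{n-j}a^{m-1}$ and $\gamma'=(a+1)^{j-1}$, and $(\delta')^*$ begins with $a<a+1=\gamma'_1$, so neither Case 1 nor Case 2 of Theorem~\ref{thm:celltr} applies. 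Theorems~\ref{thm:moving1} and~\ref{thm:moving1cor} are similarly hamstrung, since their hypothesis $\beta_1\geq\beta_2+\cdots+\beta_i-i+1$ restricts $i$ to the first few rows of an equitable ribbon. Finally, invoking Theorem~\ref{thm:smalls} to ``force the balanced configuration'' cannot work even in principle: it is a necessary condition for an inequality, i.e.\ an obstruction, so it can only ever show two ribbons are incomparable, never that a difference is Schur-positive -- and, as noted, it is constant on the family $(a+1)^{n-t}a^m(a+1)^t$ you need to separate. As written, the proposal establishes neither minimality nor uniqueness; the uniqueness half remains open even after the paper's own arguments.
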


We confirm this element is indeed minimal in Theorem~\ref{thm:minimal}.

\subsection{Box diagonal diagrams} \label{subsec:eqrib}
In order to better understand Conjecture~\ref{conj:maxel}, we present a new procedure to determine the box diagonal diagram. This will also verify that the box diagonal diagram is indeed an equitable ribbon. Because this construction relies on the Cartesian coordinate system, we adopt the  convention of counting rows from the bottom for this subsection \emph{only}. In this way, the top right corner of cell $(x,y)$ is at the point $(x,y)$.
\begin{lemma} \label{lem:eqrib} 
\hspace{2pt}
\begin{enumerate}
\item Suppose that $S\geq R$ and write $\frac{S}{R}=a-\epsilon$, where $a\in\mathbb{N}$ and $\epsilon\in[0,1)$. Then the box diagonal diagram $\bd$ is a ribbon and:\begin{itemize}
\item The $R$-th row has length $a$.
\item For $1\leq i\leq R-1$, the $i$-th row has length $a$ if $i\notin\{\lceil\frac{t}{1-\epsilon}\rceil:\hbox{ }t\in\mathbb{Z}\}$.
\item For $1\leq i\leq R-1$, the $i$-th row has length $(a+1)$ if $i\in\{\lceil\frac{t}{1-\epsilon}\rceil:\hbox{ }t\in\mathbb{Z}\}$.
In particular, these long rows occur precisely at $$\{\lceil\frac{t}{1-\epsilon}\rceil:\hbox{ }t\in\mathbb{Z}\}\cap\{1,\ldots,R-1\}.$$
\end{itemize}
\item Suppose that $R\geq S$ and write $\frac{R}{S}=b-\epsilon$, where $b\in\mathbb{N}$ and $\epsilon\in[0,1)$. Then the box diagonal diagram $\bd$ is a ribbon and:\begin{itemize}
\item The first column has length $b$.
\item For $2\leq j\leq S$, the $j$-th column has length $b$ if $j\notin\{\lfloor\frac{t}{1-\epsilon}\rfloor+1:\hbox{ }t\in\mathbb{Z}\}$.
\item For $2\leq j\leq S$, the $j$-th column has length $(b+1)$ if $j\in\{\lfloor\frac{t}{1-\epsilon}\rfloor+1:\hbox{ }t\in\mathbb{Z}\}$.
In particular, these long columns occur precisely at $$\{\lfloor\frac{t}{1-\epsilon}\rfloor+1:\hbox{ }t\in\mathbb{Z}\}\cap\{2,\ldots,S\}.$$
\end{itemize}  \end{enumerate} \end{lemma}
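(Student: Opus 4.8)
The plan is to work directly with the Cartesian description of $\bd$ given in Definition~\ref{def:bd}, using the bottom-up row convention declared for this subsection. By symmetry, parts (1) and (2) are interchanged under the $\omega$ involution $(\lambda/\mu)^t$, which swaps rows with columns and swaps $R$ with $S$; so it suffices to prove part (1), assuming $S\geq R$, and then transpose. First I would translate the geometric condition ``the cell is intercepted by $L$, or has its top left corner on $L$'' into an arithmetic condition on which cells belong to $\bd$. With the line $L: y=\tfrac{R}{S}x$, the cell $(x,y)$ (whose top right corner is at the point $(x,y)$, so it occupies $[x-1,x]\times[y-1,y]$) is intercepted by $L$ exactly when $L$ passes through its interior or grazes its top left corner $(x-1,y)$. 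Since $L$ is increasing and passes through opposite corners of the grid, the cells it meets form a connected staircase with no $2\times2$ block, i.e.\ a ribbon; I would record this connectedness/ribbon claim first, as it is needed before speaking of row lengths at all.

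Next I would fix a row $i$ (counting from the bottom, $1\leq i\leq R$) and count how many cells of $\bd$ lie in that row. A cell $(x,i)$ is in row $i$, and $L$ meets row $i$ precisely over the horizontal strip $i-1\le y\le i$, which corresponds to the $x$-interval where $\tfrac{R}{S}x\in[i-1,i]$, namely $x\in[\tfrac{S}{R}(i-1),\tfrac{S}{R}i]$. The number of unit cells in row $i$ is then governed by how many integers $x$ make the cell $[x-1,x]$ overlap this interval (with the ``top left corner'' rule handling the boundary case where $L$ exits exactly at a lattice point). Writing $\tfrac{S}{R}=a-\epsilon$ with $a\in\mathbb{N}$ and $\epsilon\in[0,1)$, the strip over row $i$ has horizontal extent exactly $\tfrac{S}{R}=a-\epsilon$, so each row has length either $a$ or $a+1$ depending on whether the fractional parts of the two endpoints straddle an integer. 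This is where the count of cells in a row acquires its $a$-or-$(a+1)$ dichotomy, giving row-equitability immediately.

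The crux is to pin down \emph{exactly which} rows are long. I expect this to be the main obstacle, and it is essentially a careful three-distance/Beatty-sequence computation. The extra cell in row $i$ appears precisely when the half-open interval $(\tfrac{S}{R}(i-1),\tfrac{S}{R}i]$ contains an integer in a way not already counted by the previous row; tracking this carry is equivalent to detecting when $\lfloor \tfrac{S}{R}i\rfloor$ jumps by $a+1$ rather than $a$ as $i$ increments. I would reformulate the jump condition in terms of $\epsilon$: since consecutive partial sums $\tfrac{S}{R}i$ advance by $a-\epsilon$, an extra integer is crossed exactly when the accumulated deficit $i\epsilon$ passes an integer threshold, and a short computation shows this happens at $i=\lceil\tfrac{t}{1-\epsilon}\rceil$ for $t\in\mathbb{Z}$, matching the stated index set. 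I would treat the top row $i=R$ separately, checking directly that $L$ exits through the corner $(S,R)$ so that the $R$-th row has length exactly $a$ (equivalently, $R\notin\{\lceil\tfrac{t}{1-\epsilon}\rceil\}$ in the relevant range), which explains why part (1) singles out the $R$-th row. Finally, having $1\le a$ forces $a+1\ge 2$, and a parallel one-line check on columns shows $\bd$ is column-equitable as well; combined with the row count this yields that $\bd$ is a ribbon with the asserted short and long rows. Part (2) then follows verbatim by applying this argument to the transpose, exchanging the roles of rows and columns and of $a$ with $b$.
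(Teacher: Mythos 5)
Your outline for part (1) — fix a row $i$, locate the leftmost and rightmost cells of $P_{R,S}$ in that row, and read off the length as a difference of floors — is exactly the paper's argument, and the index set $\{\lceil\tfrac{t}{1-\epsilon}\rceil\}$ you quote is correct. But the plan contains two genuine errors. The first is your opening symmetry claim, on which your entire treatment of part (2) rests. Transposition does \emph{not} interchange the two parts: under $(u,v)\mapsto(v,u)$ the ``top left corner'' clause of Definition~\ref{def:bd} becomes a \emph{bottom right} corner clause, so the transpose of $P_{R,S}$ is not $P_{S,R}$ but its $180^{\circ}$ rotation $(P_{S,R})^{*}$, and these genuinely differ whenever the line passes through interior lattice points. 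Concretely, using the paper's own example, the transpose of $P_{3,6}$ is the ribbon $121211$ while $P_{6,3}$ is $112121$. This is precisely why the two parts of the lemma are not mirror images: part (1) has the ceiling set and a short \emph{top} row, part (2) has the set $\{\lfloor\tfrac{t}{1-\epsilon}\rfloor+1\}$ and a designated \emph{first} column. Indeed, the ``verbatim transpose'' of part (1) would assert that the $S$-th (last) column of $P_{6,3}$ has length $b=2$, whereas its columns have lengths $2,3,3$ from left to right — the last column is long and the first is short. To get (2) from (1) you would have to compose with the reversal and re-index the long-column set, or else rerun the computation with the corner rule adjusted, as the paper implicitly does; neither appears in your plan.

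The second error sits at what you yourself call the crux. The length of row $i$ (for $i\leq R-1$) is $\lfloor i\tfrac{S}{R}\rfloor-\lfloor(i-1)\tfrac{S}{R}\rfloor+1$, so the floor jumps by $a$ (long row) or $a-1$ (short row) — never by $a+1$ as you write — and the extra integer is crossed exactly when the quantity $i(1-\epsilon)$ passes an integer, equivalently when the deficit $i\epsilon$ does \emph{not}. Your criterion ``an extra integer is crossed exactly when the accumulated deficit $i\epsilon$ passes an integer threshold'' characterizes the \emph{short} rows, not the long ones: when $R$ divides $S$ we have $\epsilon=0$, so $i\epsilon$ never passes a threshold, yet every row $1,\ldots,R-1$ is long (e.g.\ $P_{3,6}=233$). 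The set you then cite is the right one, but it does not follow from the condition you stated, so the crux derivation as written is inconsistent. A smaller symptom of the same confusion is your parenthetical that row $R$ being short is ``equivalently $R\notin\{\lceil\tfrac{t}{1-\epsilon}\rceil\}$'': when $\epsilon=0$ that set is all of $\mathbb{Z}$, so $R$ lies in it, yet the $R$-th row still has length $a$. The top row is short because its rightmost cell is forced to be $(S,R)$ rather than $\lfloor R\tfrac{S}{R}\rfloor+1=S+1$, which is why the paper computes it separately instead of through the index set.
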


\begin{proof} Because the proofs of both parts are so similar, we only prove the first part. Consider the point $(x,y)$ on the line $L$ given in Definition~\ref{def:bd}.\\
 
Let $1\leq i\leq R$. We will determine which cells are in the $i$-th row of $\bd$.\\

If $x\leq\lfloor (i-1)\frac{S}{R}\rfloor$, then $y=\frac{R}{S}x\leq i-1$ so $L$ does not intersect the interior or top left corner of cell $(\lfloor(i-1)\frac{S}{R}\rfloor,i)$. Meanwhile, if $x=\lfloor(i-1)\frac{S}{R}\rfloor+1$, then $$(i-1)\frac{S}{R}<x\leq (i-1)\frac{S}{R}+1,\hbox{ so }i-1<y\leq i-1+\frac{R}{S}\leq i;$$ that is, $L$ intersects the right side of cell $(\lfloor(i-1)\frac{S}{R}\rfloor+1,i)$ at a point above the lower right corner. Since $L$ has positive slope, it therefore also intersects the interior of this cell. So the leftmost cell in the $i$-th row of $\bd$ is cell $$(\lfloor(i-1)\frac{S}{R}\rfloor+1,i).$$ 
The rightmost cell of the $R$-th row is cell $(S,R)$. For $1\leq i\leq R-1$, if $x\geq\lfloor i\frac{S}{R}\rfloor+1$, then $y=\frac{R}{S}x>i$ so $L$ does not intersect the interior or top left corner of cell $(\lfloor i\frac{S}{R}\rfloor+2,i)$. Meanwhile, if $x=\lfloor i\frac{S}{R}\rfloor$, then $$i\frac{S}{R}-1<x\leq i\frac{S}{R},\hbox{ so }i-1\leq i-\frac{R}{S}<y\leq i;$$ that is, $L$ intersects the right side of cell $(\lfloor i\frac{S}{R}\rfloor,i)$, which is the left side of cell $(\lfloor i\frac{S}{R}\rfloor+1,i)$. Either $L$ intersects this cell in the top left corner, or since $L$ has positive slope, in the interior of this cell. So the rightmost cell in the $i$-th row of $\bd$ is cell $$(\lfloor i\frac{S}{R}\rfloor+1,i)$$ if $1\leq i\leq R-1$, or cell $(S,R)$ if $i=R$. Note that for $1\leq i\leq R-1$, the rightmost cell of the $i$-th row is directly below the leftmost cell of the $(i+1)$-th row, so adjacent rows overlap in exactly one column and indeed $\bd$ is a ribbon.\\

Now the $R$-th row has $$S-(\lfloor(R-1)\frac{S}{R}\rfloor+1)+1=S-\lfloor S-a+\epsilon\rfloor-1+1=S-(S-a)=a$$ cells. For $1\leq i\leq R-1$, the $i$-th row has $$(\lfloor i\frac{S}{R}\rfloor+1)-(\lfloor(i-1)\frac{S}{R}\rfloor+1)+1=\lfloor i(a-\epsilon)\rfloor-\lfloor(i-1)(a-\epsilon)\rfloor+1=a+(\lfloor i(1-\epsilon)\rfloor-\lfloor(i-1)(1-\epsilon)\rfloor)$$ cells. Because $\lfloor i(1-\epsilon)\rfloor - \lfloor(i-1)(1-\epsilon)\rfloor$ is a nonnegative integer and $$\lfloor i(1-\epsilon)\rfloor -\lfloor(i-1)(1-\epsilon)\rfloor<i(1-\epsilon)-((i-1)(1-\epsilon)-1)=2-\epsilon\leq 2,$$ it must be $0$ or $1$. In particular, it is $1$ if and only if there is a $t\in\mathbb{Z}$ with $$(i-1)(1-\epsilon)< t\leq i(1-\epsilon);\hbox{ equivalently if }i=\lceil\frac{t}{1-\epsilon}\rceil.$$ Therefore, the $i$-th row has length $a$ if $i\notin\{\lceil\frac{t}{1-\epsilon}\rceil:\hbox{ }t\in\mathbb{Z}\}$ and has length $(a+1)$ if $i\in\{\lceil\frac{t}{1-\epsilon}\rceil:\hbox{ }t\in\mathbb{Z}\}$. \end{proof}

\begin{corollary} \label{cor:eqrib} For any $R$ and $S$, the box diagonal diagram $\bd$ is an equitable ribbon. \end{corollary}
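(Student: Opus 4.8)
The plan is to obtain row-equitability and column-equitability of $\bd$ as direct consequences of the two parts of Lemma~\ref{lem:eqrib}, and then to reconcile the single degenerate configuration in which one part of the lemma does not on its own deliver the full equitable condition.

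I would split into the cases $S\geq R$ and $R\geq S$, which together exhaust all possibilities. In the case $S\geq R$, write $\frac{S}{R}=a-\epsilon$ as in Lemma~\ref{lem:eqrib}(1); that lemma says $\bd$ is a ribbon all of whose rows have length $a$ or $(a+1)$, so $\bd$ is row-equitable. If $a\geq 2$, the remark following the definition of equitable ribbons---that a row-equitable ribbon with $a\geq 2$ is automatically equitable---completes this case immediately. The case $R\geq S$ is entirely parallel, using Lemma~\ref{lem:eqrib}(2) to extract column-equitability, writing $\frac{R}{S}=b-\epsilon$, and applying the companion remark when $b\geq 2$.

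The only remaining situation, and the one I expect to require genuine care, is when the relevant parameter equals $1$. Suppose $S\geq R$ and $a=1$. Then $\frac{S}{R}=1-\epsilon\leq 1$, while $S\geq R$ forces $\frac{S}{R}\geq 1$; hence $S=R$ and $\epsilon=0$. In this situation I would invoke Lemma~\ref{lem:eqrib}(2) with $\frac{R}{S}=1=b-\epsilon$, so that $b=1$ and every column has length $1$ or $2$, yielding column-equitability; since $\bd$ is then both row- and column-equitable, it is equitable by definition. The symmetric sub-case $R\geq S$ with $b=1$, which again forces $R=S$, is handled identically using Lemma~\ref{lem:eqrib}(1). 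This boundary configuration $R=S$ is the crux of the argument: it is precisely where row-equitability fails to propagate to column-equitability through the remark (because that remark needs the parameter to be at least $2$), so one is forced to apply both parts of Lemma~\ref{lem:eqrib} simultaneously rather than relying on either one alone. The rest of the proof is just bookkeeping on the inequalities $\epsilon\in[0,1)$ and $S\geq R$ (respectively $R\geq S$).
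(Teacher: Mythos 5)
Your proof is correct and follows essentially the same route as the paper: both arguments apply Part 1 of Lemma~\ref{lem:eqrib} when $S\geq R$, Part 2 when $R\geq S$, use the observation that row-equitability (respectively column-equitability) with parameter at least $2$ already forces equitability, and handle the boundary case---which, as you note, is exactly $S=R$---by invoking both parts of the lemma simultaneously. The only cosmetic difference is that the paper indexes its cases by $S>R$, $R>S$, $S=R$ rather than by the value of the parameter $a$ (or $b$), but the content is identical.
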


\begin{proof} \begin{itemize}
\item If $S>R$, then by Part 1 of Lemma \ref{lem:eqrib}, the row lengths of $\bd$ are in $\{a,a+1\}$ where $a=\lceil\frac{S}{R}\rceil\geq 2$, so the column lengths of $\bd$ are in $\{1,2\}$.
\item If $R>S$, then by Part 2 of Lemma \ref{lem:eqrib}, the column lengths of $\bd$ are in $\{b,b+1\}$ where $b=\lceil\frac{R}{S}\rceil\geq 2$, so the row lengths of $\bd$ are in $\{1,2\}$.
\item If $S=R$, then by Part 1 of Lemma \ref{lem:eqrib}, the row lengths of $\bd$ are in $\{a,a+1\}$ for some $a$ and by Part 2 of Lemma \ref{lem:eqrib}, the column lengths of $\bd$ are in $\{b,b+1\}$ for some $b$.\end{itemize}\end{proof}

\begin{corollary} \label{cor:maxforposets} The box diagonal diagrams for the following posets are the following equitable ribbons. \begin{align}\label{eq:chain1a+1} &\mathcal{R}((a+1)^1a^m):&&a^{\lfloor\frac{m}{2}\rfloor}(a+1)a^{\lceil\frac{m}{2}\rceil}\\\label{eq:chain1a} &\mathcal{R}((a+1)^na^1):&&(a+1)^na\\\label{eq:chain4} &\mathcal{R}((a+1)^2a^2):&&a(a+1)(a+1)a\\\label{eq:chain5} &\mathcal{R}((a+1)^2a^3):&&a(a+1)a(a+1)a\\\label{eq:shortendscor} &\mathcal{R}((a+1)^na^2):&&a(a+1)^na\\\label{eq:smallscor} &\mathcal{R}((a+1)^na^{d(n+1)}):&&a^d(a+1)a^d\cdots a^d(a+1)a^d\end{align}\end{corollary}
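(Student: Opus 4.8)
The plan is to read all six identities directly off Lemma~\ref{lem:eqrib}, since each poset $\mathcal{R}((a+1)^na^m)$ fixes the number of rows $R=n+m$ and, via $R+S=N+1$ with $N=n(a+1)+ma$, the number of columns $S=(n+m)a-m+1$; so the box diagonal diagram in question is $P_{R,S}=P_{n+m,\,(n+m)a-m+1}$. First I would record the one arithmetic fact underlying every case: since
\[
\frac{S}{R}=\frac{(n+m)a-m+1}{n+m}=a-\frac{m-1}{n+m},
\]
we are in the setting of Part 1 of Lemma~\ref{lem:eqrib} with this same integer $a$ and with $\epsilon=\frac{m-1}{n+m}\in[0,1)$, whence $1-\epsilon=\frac{n+1}{n+m}$. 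For $a\geq 2$ one checks $S>R$ (indeed $S-R=(n+m)(a-1)-(m-1)\geq n+1$), so Part 1 applies and the long rows among the first $R-1$ rows occur exactly at the positions $\bigl\lceil \tfrac{t}{1-\epsilon}\bigr\rceil=\bigl\lceil \tfrac{t(n+m)}{n+1}\bigr\rceil$ that land in $\{1,\dots,R-1\}$, with the $R$-th row always short.

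Next I would pin down the range of $t$ cleanly rather than by inequalities. By Corollary~\ref{cor:eqrib}, $P_{R,S}$ is an equitable ribbon with $R=n+m$ rows and $N$ cells; comparing $n'(a+1)+m'a=N$ with $n'+m'=n+m$ forces $n'=n$, so $P_{R,S}$ has exactly $n$ long rows and $m$ short rows. Since the $R$-th row is short, all $n$ long rows sit among the first $R-1$, so there are exactly $n$ admissible values of $t$. Because $\tfrac{1}{1-\epsilon}=\tfrac{n+m}{n+1}>1$, the map $t\mapsto\bigl\lceil \tfrac{t(n+m)}{n+1}\bigr\rceil$ is strictly increasing, and $t=1$ already yields an admissible position; hence the admissible values are precisely $t=1,\dots,n$, and it remains only to simplify the ceiling in each case.

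The finite cases \eqref{eq:chain4} and \eqref{eq:chain5} are substitutions: $\lceil 4t/3\rceil$ gives $2,3$ and $\lceil 5t/3\rceil$ gives $2,4$, reproducing $a(a+1)(a+1)a$ and $a(a+1)a(a+1)a$. The parametrized families follow from the single identity above. For \eqref{eq:chain1a} ($m=1$) it collapses to $t$, placing long rows at $1,\dots,n$, i.e.\ $(a+1)^na$. For \eqref{eq:chain1a+1} ($n=1$) only $t=1$ survives and $\lceil\tfrac{m+1}{2}\rceil=\lfloor\tfrac{m}{2}\rfloor+1$ locates the unique long row, giving $a^{\lfloor m/2\rfloor}(a+1)a^{\lceil m/2\rceil}$. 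For \eqref{eq:shortendscor} ($m=2$) one has $\bigl\lceil t+\tfrac{t}{n+1}\bigr\rceil=t+1$ since $0<\tfrac{t}{n+1}<1$, placing long rows at $2,\dots,n+1$, i.e.\ $a(a+1)^na$. For \eqref{eq:smallscor} ($m=d(n+1)$) one computes $\tfrac{t(n+m)}{n+1}=td+\tfrac{tn}{n+1}=td+t-\tfrac{t}{n+1}$, so the ceiling is $t(d+1)$, placing long rows at $d+1,2(d+1),\dots,n(d+1)$ and yielding the blocked ribbon $a^d(a+1)a^d\cdots a^d(a+1)a^d$.

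Finally, the case $a=1$ lies outside Part 1 (there $S=n+1\leq R$), and I would dispatch it with the involution $\omega(s_{\lambda/\mu})=s_{(\lambda/\mu)^t}$: the transpose $(P_{R,S})^t=P_{S,R}$ again satisfies $S\geq R'$, so Part 1 computes its row lengths by the mirror of the formula above, and transposing the resulting ribbon returns the stated shapes. I expect the only real labor to be bookkeeping in the two genuinely parametrized families \eqref{eq:shortendscor} and especially \eqref{eq:smallscor}, namely verifying the ceiling simplifications hold uniformly in $n,m,d$ so that no long row is spuriously introduced or omitted; the counting argument of the second paragraph is what keeps this from becoming an awkward endpoint analysis for $t$.
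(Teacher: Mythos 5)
Your proposal is correct, and it rests on the same engine as the paper's proof: reading the structure of $P_{R,S}$ off Lemma~\ref{lem:eqrib}. The difference is in coverage and organization. The paper proves only the representative case \eqref{eq:chain5}, computing $R$, $N$, $S$ and the ratio concretely and intersecting $\{\lceil\frac{5}{3}t\rceil : t\in\mathbb{Z}\}$ with $\{1,\ldots,4\}$, explicitly leaving the remaining five cases to the reader; you instead derive the uniform data $\epsilon=\frac{m-1}{n+m}$, $\frac{1}{1-\epsilon}=\frac{n+m}{n+1}$ valid for every poset in the list, and then dispose of the endpoint analysis for $t$ by a counting argument: Corollary~\ref{cor:eqrib} together with the cell count forces exactly $n$ long rows, all among rows $1,\ldots,R-1$, and strict monotonicity of $t\mapsto\lceil t(n+m)/(n+1)\rceil$ pins the admissible values to $t=1,\ldots,n$. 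That is a genuinely cleaner way to handle all six cases at once, and your ceiling simplifications in each case are correct; the answers you obtain agree with the stated ribbons up to reversal, which is harmless by Corollary~\ref{cor:ineq} and Reduction 1. (One pedantic slip: for $m=1$ you have $\frac{n+m}{n+1}=1$, not $>1$; monotonicity still holds because consecutive values differ by at least $1$.)

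The one step you should not leave as a bare assertion is the $a=1$ branch. You invoke $(P_{R,S})^t=P_{S,R}$, which is nowhere stated in the paper and is not automatic: the tie-breaking rule in Definition~\ref{def:bd} (``interior or top left corner'') must be checked to survive the reflection realizing the transpose. It does --- the reflection $(u,v)\mapsto(R-v,S-u)$ carries top-left corners of cells to top-left corners of their image cells --- but rather than proving this, observe that Part 2 of Lemma~\ref{lem:eqrib} exists precisely for the situation $R\geq S$: when $a=1$ it gives the column lengths of $P_{R,S}$ directly, and converting a ribbon's column lengths to row lengths is mechanical. That is exactly what the paper does in its $a=1$ computation for \eqref{eq:chain5}, and adopting it makes both your appeal to $\omega$ and the unproven transposition identity unnecessary.
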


\begin{remark} \label{rem:maxforposets} Later we will prove in each case that these box diagonal diagrams are indeed maximal elements of their respective posets, thus confirming Conjecture~\ref{conj:maxel} in these cases. We prove the first four in Theorem \ref{thm:chains} of Section \ref{sec:chains}, the fifth in Corollary \ref{cor:shortends} of Section \ref{sec:shortends}, and the sixth in Corollary \ref{cor:even} of Section \ref{sec:coarsenings}. \end{remark}

\begin{proof} We prove only \eqref{eq:chain5} because it is the most representative of how each part can be proved. We  use Lemma \ref{lem:eqrib} to calculate $P_{R,S}$. Because the cases of $a\geq 2$ and $a=1$ are qualitatively different, we handle them separately.\\

If $a\geq 2$, then \begin{itemize}\item $R=5$ \item $N=5a+2$ \item $S=5a-2$ \item $\frac{S}{R}=a-\frac{2}{5}=a-\epsilon$ where $\frac{1}{1-\epsilon}=\frac{5}{3}$ \item so by Part 1 of Lemma \ref{lem:eqrib} the long rows of $P_{R,S}$ are at $$\{\lceil\frac{5}{3}t\rceil:\hbox{ }t\in\mathbb{Z}\}\cap\{1,\ldots,4\}=\{2,4\}.$$ \end{itemize} So $P_{R,S}=a(a+1)a(a+1)a$.\\

If $a=1$, then \begin{itemize}\item $R=5$ \item $N=7$ \item $S=3$ \item $\frac{R}{S}=2-\frac{1}{3}=b-\epsilon$, where $\frac{1}{1-\epsilon}=\frac{3}{2}$ \item so by Part 2 of Lemma \ref{lem:eqrib}, the long columns of $P_{R,S}$ are at $$\{\lfloor\frac{3}{2}t\rfloor+1\}\cap\{2,\ldots,3\}=\{2\}.$$ \end{itemize} So $P_{R,S}$ has column lengths $232$, and hence $P_{R,S}$ has row lengths $12121$.\end{proof}

\section{Ribbon operations for Schur-positivity}\label{sec:tools}

In the next two subsections, we establish two ribbon Schur function inequalities, which will be pivotal for calculations in Section \ref{sec:chains}.

\subsection{Moving a cell from the first row}\label{subsec:moving1}

Our first inequality compares a ribbon $\alpha$ to a ribbon $\beta$ that is obtained from $\alpha$ by moving a cell from the first row to a different row.

\begin{definition} \label{def:moving1} Let $\alpha=\alpha_1\cdots\alpha_R$ be a composition with $\alpha _1 \geq 2$ and $2\leq i\leq R$. Then define $$M_i(\alpha)=(\alpha_1-1)\alpha_2\cdots\alpha_{i-1}(\alpha_i+1)\alpha_{i+1}\cdots\alpha_R;$$ that is, $M_i(\alpha)$ is the composition formed by decrementing the first part of $\alpha$ by one and incrementing the $i$-th part of $\alpha$ by one. \end{definition}

\begin{example} \label{ex:moving1first} Let $\alpha=9544$ and $i=3$. Then  $M_3(\alpha)=8554$.\end{example}

\begin{definition} \label{def:restrictedLR} Let $\alpha=\alpha_1\cdots\alpha_R$ be a composition, $2\leq i\leq R$, and $\nu$ be a partition. Then we define $C'_{\alpha,i,\nu}$ to be the set of LR tableaux $T$ such that\begin{itemize}
\item $sh(T)=\alpha$, $c(T)=\nu$, and
\item if $T_{i,i_1}=1$, then $i\leq R-1$ and $T_{i,i_1+1}\geq T_{i+1,i_1}$,

\end{itemize} where the first cell in the $i$-th row of $T$ is in column $i_1$. We refer to such $T$ as \emph{row-$i$-restricted LR tableaux}, or when the context is clear, simply as \emph{restricted LR tableaux}. Also set $$c'_{\alpha,i,\nu}=|C'_{\alpha,i,\nu}|\hbox{ and }c'_{\alpha,i,j,\nu}=|C'_{\alpha,i,\nu}\cap C'_{\alpha,j,\nu}|.$$
\end{definition}

Informally, we can illustrate a row-$i$-restricted LR tableau in the following way. If the first cell of row $i$ denoted by the asterisk is a $1$ then the bullet to the right must be at least the bullet below.

$$\tableau{&&\ast&\bullet&\ &\ \\ \ &\ &\bullet}$$

\begin{example} \label{ex:lrrestricted} The following LR tableau $T$ is of shape $\alpha=8554$ and has content $\nu=11\hbox{ }74$.
$$T=\tableau{&&&&&&&&&&&1&1&1&1&1&1&1&1\\&&&&&&&2&2&2&2&2\\&&&1&3&3&3&3\\1&1&2&2}$$Because $T_{2,8}\neq 1$, we have $T\in C'_{\alpha,2,\nu}$. Since $T_{3,4}=1$, plus $i=3\leq 3=R-1$ and $$T_{3,5}=3\geq 2=T_{4,4},$$ we have $T\in C'_{\alpha,3,\nu}$ as well. However $T\notin C'_{\alpha,4,\nu}$ because $T_{4,1}=1$ but now $i=4\nleq 3=R-1$.\end{example}

\begin{theorem} \label{thm:moving1} Let $\alpha=\alpha_1\cdots\alpha_R$ be a composition, $2\leq i\leq R$, and $\beta=M_i(\alpha)$. If $\beta_1\geq \beta_2+\cdots+\beta_i-i+1$, then $$\rb\geq_s \ra;$$ moreover, $$\rb-\ra=\sum_\nu c'_{\beta,i,\nu}s_\nu.$$ \end{theorem}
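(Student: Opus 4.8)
The plan is to expand both ribbon Schur functions by the Littlewood-Richardson rule (Theorem~\ref{thm:LRrule}). Writing $r_\alpha=\sum_\nu c^\alpha_\nu s_\nu$ and $r_\beta=\sum_\nu c^\beta_\nu s_\nu$, where $c^\gamma_\nu$ counts the LR tableaux of ribbon shape $\gamma$ with content $\nu$, it suffices to prove the numerical identity $c^\beta_\nu-c^\alpha_\nu=c'_{\beta,i,\nu}$ for each $\nu$; summing against $s_\nu$ then delivers both $r_\beta\geq_s r_\alpha$ and the exact formula at once. Since $C'_{\beta,i,\nu}$ is by definition a subset of the LR tableaux of shape $\beta$ and content $\nu$, the identity is equivalent to a bijection between the LR tableaux of shape $\alpha$ with content $\nu$ and the complementary \emph{non-restricted} tableaux of shape $\beta$, that is, those $T$ with $T_{i,i_1}=1$ and with either $i=R$ or $T_{i,i_1+1}<T_{i+1,i_1}$.

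To build the bijection I would record a ribbon filling by its weakly increasing rows $w_1,\ldots,w_R$, subject only to the overlap inequality coming from the single shared column of consecutive rows, namely that the leftmost entry of row $k$ is strictly less than the rightmost entry of row $k+1$. The map $\Phi$ moves a cell from the first row to row $i$: given an LR tableau $S$ of shape $\alpha$, whose first row is $1^{\alpha_1}$, I delete one $1$ from the first row and prepend a new entry $1$ at the left of row $i$, leaving all other rows fixed. Its proposed inverse $\Psi$ deletes the leftmost entry of row $i$, which is forced to be a $1$, and restores a $1$ to the first row. Both maps manifestly preserve content.

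Next I would check validity. Rows stay weakly increasing because I only ever prepend a minimal entry. Among the overlap inequalities, the only ones touching the altered rows are the pairs $(1,2)$, $(i-1,i)$ and $(i,i+1)$; the first two involve the rightmost entry of the lower row and the first entry of the upper row, both unchanged, so they are automatic. The delicate one is $(i,i+1)$: after $\Phi$ the leftmost entry of row $i$ is the new $1$, which must be strictly below the rightmost entry of row $i+1$. This holds since in $S$ the former leftmost entry of row $i$ already lay strictly below that entry, forcing it to be at least $2$. Crucially, this very inequality is the defining condition $T_{i,i_1+1}<T_{i+1,i_1}$ placing $\Phi(S)$ in the non-restricted set (and when $i=R$ there is no lower row, so the condition is vacuous and the tableau is non-restricted exactly when its row-$i$ leftmost entry is $1$). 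Thus $\Phi$ and $\Psi$ are mutually inverse bijections between the two sets, modulo the lattice word condition.

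The hard part will be verifying that $T=\Phi(S)$ satisfies the lattice word condition, and this is precisely where the hypothesis $\beta_1\geq\beta_2+\cdots+\beta_i-i+1$ is used. Since the new $1$ is the leftmost cell of row $i$, it is read last among the entries of that row, so the reverse reading word of $T$ is obtained from that of $S$ by deleting one $1$ from the initial block and reinserting a $1$ immediately after the contribution of row $i$. Any prefix reaching past this reinserted $1$ has the same content as the corresponding prefix of $S$ and inherits the lattice condition, so I need only control prefixes ending within rows $1,\ldots,i$, where $T$ has exactly one fewer $1$ than $S$; it therefore suffices to show every such prefix of $S$ satisfies $\#1>\#2$ strictly. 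Reading each row from right to left, $\#1-\#2$ stays flat on entries at least $3$, decreases on the $2$'s, then increases on the $1$'s, so over rows $1,\ldots,i$ it bottoms out just after the $2$'s of some row $k$; the required bound thus reduces to estimating the number of $2$'s in rows $2,\ldots,k$. Because entries strictly increase down each column, each column contains at most one $2$, so this number is at most the count of distinct columns met by rows $2,\ldots,k$, which is $\sum_{j=2}^{k}\alpha_j-(k-2)$. The resulting requirement $\alpha_1\geq\sum_{j=2}^{k}\alpha_j-(k-2)+1$ increases with $k$ and so is tightest at $k=i$, reading $\alpha_1\geq\sum_{j=2}^{i}\alpha_j-i+3$; substituting $\beta_1=\alpha_1-1$ and $\beta_i=\alpha_i+1$ shows this is exactly the stated hypothesis. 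I expect this calibrated column-counting bound, together with the bookkeeping that locates the minimum of $\#1-\#2$ at $k=i$, to be the principal obstacle.
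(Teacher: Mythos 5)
Your proposal is correct and follows essentially the same route as the paper's proof: your map $\Phi$ is exactly the paper's injection $f_i$ (moving a $1$ from the first row to the front of row $i$), your identification of the complementary non-restricted tableaux agrees with the paper's characterization of the image of $f_i$, and your bound on the number of $2$'s in rows $2,\ldots,i$ by the number of columns they occupy is precisely how the paper derives the lattice word condition from the hypothesis $\beta_1\geq\beta_2+\cdots+\beta_i-i+1$. The only cosmetic difference is that you package the argument as a bijection from LR tableaux of shape $\alpha$ onto the complement of $C'_{\beta,i,\nu}$, whereas the paper presents an injection and then identifies its image-complement as $C'_{\beta,i,\nu}$ --- these are the same argument.
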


\begin{example} \label{ex:moving1} Let $\alpha=9544$ and $i=3$, so that $\beta=8554$ by Example~\ref{ex:moving1first}. Indeed, $8\geq 5+5-3+1$, so by Theorem \ref{thm:moving1} we can conclude that $\rb\geq_s \ra$. Furthermore, the difference $\rb-\ra$ is given by restricted LR tableaux $T$ of shape $\beta=8544$ where $$\hbox{if } T_{3,4}=1,\hbox{ then }T_{3,5}\geq T_{4,4}.$$  We illustrate this restriction as follows, where if the asterisk is a $1$ then the bullet to the right must be at least the bullet below. $$\tableau{&&&&&&&&&&&\hspace{0pt}&\hspace{0pt}&\hspace{0pt}&\hspace{0pt}&\hspace{0pt}&\hspace{0pt}&\hspace{0pt}&\hspace{0pt}\\&&&&&&&\hspace{0pt}&\hspace{0pt}&\hspace{0pt} & \hspace{0pt}& \hspace{0pt}\\&&&\ast&\bullet&\hspace{0pt}&\hspace{0pt}&\hspace{0pt}\\\hspace{0pt}&\hspace{0pt}&\hspace{0pt}&\bullet}$$\end{example}

\begin{proof} Let $A$ be the set of LR tableaux of shape $\alpha$ and let $B$ be the set of LR tableaux of shape $\beta$. We will prove the desired inequality by finding an injective content-preserving function $$f_i:A\rightarrow B$$ and we will calculate the difference $\rb-\ra$ by determining which LR tableaux of $B$ are not in the image of $A$.\\

For an LR tableau $T\in A$, set $f_i(T)$ to be the tableau of shape $\beta$ where the top row is filled with all $1$'s, the $i$-th row is filled with a $1$ followed by the entries of the $i$-th row in $T$, and all other rows are filled as in $T$. Informally, we can think of $f_i$ as moving a $1$ from the top row to the $i$-th row.\\

By construction, $f_i(T)$ is an SSYT of shape $\beta$ with the same content as $T$. Also note that $f_i$ is injective because we can reconstruct $T$ from $f_i(T)$. It remains to verify the lattice word condition for $f_i(T)$. Because the reading word of $f_i(T)$ differs from that of $T$, which is a lattice word, only by reading a $1$ after the other entries of the $i$-th row, it suffices to check that there have not been too many $2$'s read before this point. The tableau $f_i(T)$ has $\beta_1$ $1$'s in the first row. Then in the subtableau $U$ of $f_i(T)$ supported on the following $(i-1)$ rows, with the leftmost cell of row $i$ removed, since we know that cell will be filled with a $1$, we have that \begin{align}\nonumber\hbox{the number of $2$'s in $U$}&\leq\hbox{the number of columns of $U$}\\\nonumber&=\hbox{the number of cells of $U$}-\hbox{the number of rows of $U$}+1\\\nonumber&=\beta_2+\cdots+\beta_{i-1}+(\beta_i-1)-(i-1)+1\\\nonumber&=\beta_2+\cdots+\beta_i-i+1\\\nonumber&\leq\beta_1\end{align}
by hypothesis. Therefore the lattice word condition is preserved and indeed $f_i(T)\in B$.\\

Finally, consider an LR tableau $f_i(T)$ in the image of $f_i$. Let $i_1$ be the column of the first cell in the $i$-th row of $f_i(T)$. By construction, we know that $f_i(T)_{i,i_1}=1$. Additionally, if $i\leq R-1$, we have $$f_i(T)_{i,i_1+1}=T_{i,i_1}< T_{i,i_1+1}=f_i(T)_{i,i_1+1}$$ because $T$ is column strict. Therefore, the LR tableaux of $B$ that are not in the image of $f_i$ are precisely the row-$i$-restricted LR tableaux of shape $\beta$ and so $$\rb-\ra=\sum_{T\in  B}s_{c(T)}-\sum_{T'\in A}s_{c(T')}=\sum_{T\in (B\setminus f_i(A))}s_{c(T)}=\sum_{\nu}\sum_{T\in C'_{\beta,i,\nu}}s_{c(T)}=\sum_\nu c'_{\beta,i,\nu}s_\nu,$$ as desired.\end{proof}

\begin{example} \label{ex:moving1proof} If $T$ is the tableau below, then $f_4(T)$ is as indicated.
\begin{align}\nonumber T&=\tableau{&&&&&&&&&&&1&1&1&1&1&1&1&1&1\\&&&&&&&2&2&2&2&2\\&&&1&3&3&3&3\\&1&2&2}\\\nonumber f_4(T)&=\tableau{&&&&&&&&&&&1&1&1&1&1&1&1&1\\&&&&&&&2&2&2&2&2\\&&&1&3&3&3&3\\1&1&2&2}\end{align} Note that $f_4(T)$ can not be in the image of $f_2$ because $f_4(T)_{2,8}\neq 1$. Also, $f_4(T)$ can not be in the image of $f_3$ because although $f_3(T)_{3,4}=1$, its preimage would have had to be $$\tableau{&&&&&&&&&&&1&1&1&1&1&1&1&1&1\\&&&&&&&2&2&2&2&2\\&&&&3&3&3&3\\&&1&2&2}$$that is not column strict.\end{example} 

We now make a slight generalization of Theorem \ref{thm:moving1}.

\begin{theorem} \label{thm:moving1cor} Let $\alpha=\alpha_1\cdots\alpha_R$ be a composition, $2\leq i<j\leq R$, and $\beta=M_i(M_j(\alpha))$. If $\beta_1\geq \beta_2+\cdots+\beta_j-j+1$, then \begin{equation} \label{eq:moving1cor}\rb-r_{M_j(\alpha)}-r_{M_i(\alpha)}+\ra =\sum_\nu c'_{\beta,i,j,\nu}s_\nu \geq_s 0.\end{equation}\end{theorem}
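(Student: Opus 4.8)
The plan is to derive \eqref{eq:moving1cor} by applying Theorem~\ref{thm:moving1} twice and subtracting, which reduces everything to a single counting identity among restricted LR tableaux. Throughout I write $\delta=M_i(\alpha)$, and I first record that, since $i\neq j$, the operators $M_i$ and $M_j$ commute, so $\beta=M_i(M_j(\alpha))=M_j(M_i(\alpha))=M_j(\delta)$; in coordinates $\beta$ has first part $\alpha_1-2$, $i$-th part $\alpha_i+1$, $j$-th part $\alpha_j+1$, and all other parts equal to those of $\alpha$.

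Next I would record the two instances of Theorem~\ref{thm:moving1} that I need. Applying it to the composition $M_j(\alpha)$ in position $i$ (whose image under $M_i$ is $\beta$) gives $\rb-r_{M_j(\alpha)}=\sum_\nu c'_{\beta,i,\nu}s_\nu$, and applying it to $\alpha$ in position $i$ gives $r_{M_i(\alpha)}-\ra=\sum_\nu c'_{M_i(\alpha),i,\nu}s_\nu$. Each requires a hypothesis of the form (first part of the target) $\geq$ (sum of parts $2$ through $i$ of the target) $-\,i+1$, and I would check that both follow from the given hypothesis $\beta_1\geq\beta_2+\cdots+\beta_j-j+1$: the omitted parts $\beta_{i+1},\ldots,\beta_j$ are each at least $1$, so $\beta_2+\cdots+\beta_j-j+1\geq\beta_2+\cdots+\beta_i-i+1$, while $(M_i(\alpha))_1=\beta_1+1$ and $(M_i(\alpha))_k=\beta_k$ for $2\leq k\leq i$. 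Subtracting the second identity from the first yields
$$\rb-r_{M_j(\alpha)}-r_{M_i(\alpha)}+\ra=\sum_\nu\bigl(c'_{\beta,i,\nu}-c'_{M_i(\alpha),i,\nu}\bigr)s_\nu,$$
so it remains only to prove the coefficient identity $c'_{\beta,i,\nu}-c'_{M_i(\alpha),i,\nu}=c'_{\beta,i,j,\nu}$ for every partition $\nu$; nonnegativity of $c'_{\beta,i,j,\nu}$ then gives $\geq_s 0$ at no extra cost.

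The heart of the argument, and the step I expect to be the main obstacle, is this coefficient identity, which I would establish bijectively. Since $\beta=M_j(\delta)$, Theorem~\ref{thm:moving1} applied to $\delta$ in position $j$ — whose hypothesis is \emph{exactly} the given one — supplies the injective, content-preserving map $f_j$ from LR tableaux of shape $\delta$ to LR tableaux of shape $\beta$ whose image is precisely the tableaux of shape $\beta$ that are \emph{not} row-$j$-restricted. The key claim is that $f_j$ neither creates nor destroys the row-$i$-restriction: because $j>i$, the map $f_j$ alters only the top row and the $j$-th row, leaving row $i$ untouched, and it changes the $j$-th row (in the borderline case $j=i+1$) only by prepending a $1$ on the \emph{left}, which does not disturb the rightmost entry of row $i+1$. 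Since the row-$i$-restriction compares the second entry of row $i$ with the rightmost entry of row $i+1$ — data that $f_j$ preserves — a tableau $T$ of shape $\delta$ is row-$i$-restricted if and only if $f_j(T)$ is. Hence $f_j$ restricts to a bijection $C'_{\delta,i,\nu}\to C'_{\beta,i,\nu}\setminus C'_{\beta,j,\nu}$, giving $c'_{M_i(\alpha),i,\nu}=c'_{\beta,i,\nu}-c'_{\beta,i,j,\nu}$, which is the required identity. I would take particular care with the case $j=i+1$, the only situation in which $f_j$ modifies a row (namely row $i+1$) that appears in the row-$i$-restriction, to confirm that only the left end of that row is changed and so the relevant comparison is genuinely unaffected.
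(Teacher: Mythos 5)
Your proposal is correct and follows essentially the same route as the paper: two applications of Theorem~\ref{thm:moving1} (with the same hypothesis checks) to reduce to the coefficient identity, and then the observation that the injection $f_j$ from the proof of Theorem~\ref{thm:moving1}, applied to $M_i(\alpha)$ in position $j$, preserves the row-$i$-restriction, so that the restricted tableaux of shape $\beta$ outside its image are exactly those restricted in both rows $i$ and $j$. Your only addition is to spell out why $f_j$ preserves the restriction in both directions (including the borderline case $j=i+1$), a point the paper asserts without detail.
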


\begin{proof} To prove this, we will use three applications of Theorem \ref{thm:moving1}. First, we can check that $$\beta_1\geq\beta_2+\cdots+\beta_i+(\beta_{i+1}+\cdots+\beta_j)-j+1\geq\beta_2+\cdots+\beta_i+(j-i)-j+1=\beta_2+\cdots+\beta_i-i+1,$$ and so by Theorem \ref{thm:moving1} the first two terms of Equation \eqref{eq:moving1cor} give us $$\rb-r_{M_j(\alpha)}=\sum_\nu c'_{\beta,i,\nu}s_\nu.$$ Second, because the first part of $M_i(\alpha)$ is one larger than the first part of $\beta$, and otherwise the two compositions agree to at least the first $i$ parts, the hypothesis of Theorem \ref{thm:moving1} is satisfied for $M_i(\alpha)$ and so the latter two terms of Equation \eqref{eq:moving1cor} give us $$-(r_{M_i(\alpha)}-\ra)=-\sum_\nu c'_{M_i(\alpha),i,\nu}s_\nu.$$ Third, letting $A$ be the set of row-$i$-restricted LR tableaux of shape $M_i(\alpha)$ and $B$ be the set of row-$i$-restricted LR tableaux of shape $\beta$, then we can apply the same injection $f_j:A\rightarrow B$ as in the proof of Theorem \ref{thm:moving1}, noting that it preserves the restriction on row $i$. The restricted LR tableaux of $B$ not in the image of $f_j$ will now have the restriction on both row $i$ and row $j$, and so $$\rb-r_{M_j(\alpha)}-r_{M_i(\alpha)}+\ra=\sum_\nu c'_{\beta,i,\nu}s_\nu-\sum_\nu c'_{M_i(\alpha),i,\nu}s_\nu=\sum_\nu c'_{\beta,i,j,\nu}s_\nu\geq_s 0.$$\end{proof}

\begin{example} \label{ex:moving1cor} Let $\alpha=10\hbox{ }444$, $i=2$, and $j=3$, so that $\beta=8554$. Indeed, $8\geq 5+5-3+1$, so by Theorem \ref{thm:moving1cor} the difference $r_{8554}-r_{9544}-r_{9454}+r_{10\hbox{ }444}$ is Schur-positive and is illustrated by the diagram below.  $$\tableau{&&&&&&&&&&&\hspace{0pt}&\hspace{0pt}&\hspace{0pt}&\hspace{0pt}&\hspace{0pt}&\hspace{0pt}&\hspace{0pt}&\hspace{0pt}\\&&&&&&&\ast&\bullet&\hspace{0pt} & \hspace{0pt}& \hspace{0pt}\\&&&\ast&\bullet&\hspace{0pt}&\hspace{0pt}&\bullet\\\hspace{0pt}&\hspace{0pt}&\hspace{0pt}&\bullet}$$\end{example}

\begin{remark} \label{rem:moving1cor} Theorem \ref{thm:moving1cor} generalizes to a Schur-positive inclusion-exclusion-type ribbon Schur function sum where cells have been moved to more than two rows. However, Theorem \ref{thm:moving1cor} is sufficient for our results in Section \ref{sec:chains}.\end{remark}

\subsection{Exchanging adjacent rows}\label{subsec:celltrans}
 
Our second inequality compares a ribbon $\alpha$ to a ribbon $\beta$ that is obtained from $\alpha$ by exchanging adjacent rows of $\alpha$. We will need to develop some machinery before we make the comparison.

\begin{lemma} \label{lem:connearcon} \cite[Equation (2.2)]{btvw} Let $\alpha$ and $\beta$ be compositions. Then the ribbon Schur functions $\ra$ and $\rb$ satisfy $$\ra \rb=r_{\alpha\cdot\beta}+r_{\alpha\odot\beta}.$$ \end{lemma}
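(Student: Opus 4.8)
The plan is to reduce the product to a single \emph{disconnected} skew shape and then split its semistandard fillings according to a local comparison at the junction of the two ribbons. First I would record the elementary fact that the skew Schur function of a disconnected skew shape is the product of the skew Schur functions of its connected components: an SSYT of a disconnected shape restricts independently to each component, with no constraints between components. Placing the ribbon $\beta$ immediately below and to the left of $\alpha$, so that the bottom row of $\alpha$ and the top row of $\beta$ lie in consecutive rows but share no column, produces a disconnected skew shape $D$ with $s_D=\ra\rb$, and hence
$$\ra\rb=\sum_{T:\,sh(T)=D}x^{c(T)}.$$

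Next I would compare $D$ with the two ribbons $\alpha\cdot\beta$ and $\alpha\odot\beta$. These three shapes are identical except at the junction of $\alpha$'s bottom row and $\beta$'s top row: in $D$ the two rows are disjoint; in $\alpha\cdot\beta$ the block $\beta$ is shifted one column to the right so that the two rows overlap in exactly one column; and in $\alpha\odot\beta$ the two rows are merged into a single row of length $\alpha_{\ell(\alpha)}+\beta_1$, with the cells of $\beta$ on the left. Writing $a$ for the entry in the leftmost cell of $\alpha$'s bottom row and $b$ for the entry in the rightmost cell of $\beta$'s top row, I would observe that the only constraint added in passing from $D$ to $\alpha\cdot\beta$ is column-strictness in the overlap column, namely $a<b$, while the only constraint added in passing to $\alpha\odot\beta$ is weak increase across the merge, namely $b\leq a$.

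The key step is then the dichotomy: every SSYT of $D$ has a well-defined pair $(a,b)$ satisfying either $a<b$ or $a\geq b$, and these two cases are complementary. Shifting the $\beta$-block one column to the right sends an SSYT of $D$ with $a<b$ to an SSYT of $\alpha\cdot\beta$ of the same content, bijectively onto all such tableaux; likewise, merging the two junction rows sends an SSYT of $D$ with $a\geq b$ to an SSYT of $\alpha\odot\beta$, bijectively. Summing over the two classes gives
$$\ra\rb=\sum_{T:\,sh(T)=\alpha\cdot\beta}x^{c(T)}+\sum_{T:\,sh(T)=\alpha\odot\beta}x^{c(T)}=r_{\alpha\cdot\beta}+r_{\alpha\odot\beta}.$$

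The main obstacle is verifying that these two local moves really are content-preserving bijections onto \emph{valid} semistandard tableaux, which amounts to establishing locality: moving $\beta$ rigidly relative to $\alpha$ (horizontally for $\alpha\cdot\beta$, vertically for $\alpha\odot\beta$) leaves $\beta$'s internal shape intact, so all semistandard conditions internal to $\alpha$ and internal to $\beta$ — including the way the remaining rows of each ribbon attach to the junction rows — are untouched, and only the comparison between $a$ and $b$ changes. Once this is checked, the constraints $a<b$ and $b\leq a$ partition the fillings of $D$ exactly, and the identity follows. As an alternative route I would instead expand each ribbon Schur function in the complete homogeneous basis via the classical signed coarsening sum $r_\gamma=\sum_{\delta}(-1)^{\ell(\gamma)-\ell(\delta)}h_\delta$, summed over coarsenings $\delta$ of $\gamma$ (a consequence of the Jacobi--Trudi determinant); using $h_\gamma h_\delta=h_{\gamma\cdot\delta}$ and classifying the coarsenings of $\alpha\cdot\beta$ by whether they merge across the junction, the junction-merging terms of $r_{\alpha\cdot\beta}$ cancel $r_{\alpha\odot\beta}$ term by term, leaving exactly $\ra\rb$.
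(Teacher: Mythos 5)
Your proposal is correct, but there is nothing in the paper to compare it against: the paper does not prove this lemma at all, it simply cites it as Equation (2.2) of \cite{btvw}. What you have written is essentially the classical bijective proof of that cited identity, and it is sound. The main route is complete: placing $\beta$ below-left of $\alpha$ (consistent with the paper's convention that ribbon rows descend to the left) gives a disconnected shape $D$ with $s_D=\ra\rb$; the only condition distinguishing a filling of $D$ from a filling of $\alpha\cdot\beta$ is the column-strict comparison $a<b$ in the single overlap column, and the only condition distinguishing it from a filling of $\alpha\odot\beta$ is the row-weak comparison $b\leq a$ across the merge; since these two conditions are complementary, the fillings of $D$ partition into the two classes and the identity follows. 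You correctly flag and handle the one point that needs care, namely locality: one should also observe that when $\alpha_{\ell(\alpha)}=1$ or $\beta_1=1$ the overlap column of $\alpha\cdot\beta$ contains additional cells, but the comparisons they impose are already internal column constraints of $\alpha$ or of $\beta$, so $a<b$ really is the only new condition; your rigid-translation argument covers this. Your alternative route is also valid and is closer in spirit to how such identities are handled in \cite{btvw} and in Section \ref{sec:coarsenings} of this paper: coarsenings of $\alpha\cdot\beta$ that do not merge across the junction contribute exactly $\ra\rb$ by multiplicativity of the $h$ basis, while those that do merge are in sign-reversing bijection with coarsenings of $\alpha\odot\beta$, contributing $-r_{\alpha\odot\beta}$. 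Either argument would serve as a self-contained proof; the bijective one is more elementary, while the coarsening one meshes with Theorem \ref{thm:jtcoarse}.
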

 
\begin{definition} \label{def:uniint} For two partitions $\lambda=\lambda_1\lambda_2\cdots\lambda_{\ell(\lambda)}$ and $\mu=\mu_1\mu_2\cdots\mu_{\ell(\mu)}$ where $\ell(\lambda)\geq\ell(\mu)$ without loss of generality, define the \emph{union of $\lambda$ and $\mu$} to be the partition $$\lambda\vee\mu=\max(\lambda_1,\mu_1)\max(\lambda_2,\mu_2)\cdots\max(\lambda_{\ell(\lambda)},\mu_{\ell(\lambda)})$$ and the \emph{intersection of $\lambda$ and $\mu$} to be the partition $$\lambda\wedge\mu=\min(\lambda_1,\mu_1)\min(\lambda_2,\mu_2)\cdots\min(\lambda_{\ell(\mu)},\mu_{\ell(\mu)}).$$ These operations are indeed the set-theoretic union and intersection of the corresponding diagrams. For two skew shapes $\lambda/\mu$ and $\nu/\rho$, define the union and intersection to be the skew shapes $$(\lambda/\mu)\vee(\nu/\rho)=(\lambda\vee\nu)/(\mu\vee\rho)\hbox{ and }(\lambda/\mu)\wedge(\nu/\rho)=(\lambda\wedge\nu)/(\mu\wedge\rho).$$
  We warn that these are the skew shapes arising from the set-theoretic union and intersection of the outer and inner shapes separately, and may not be the set-theoretic union and intersection of the overall diagram. \end{definition}
  
\begin{example}  \label{ex:lpppart1} Let $\lambda=52$, $\mu=3$, $\nu=53$, and $\rho=2$. Then $\lambda/\mu$ is the ribbon $22$, $\nu/\rho$ is the ribbon $33$, \begin{align}\nonumber (\lambda/\mu)\vee(\nu/\rho)&=(\lambda\vee\nu)/(\mu\vee\rho)=53/3\hbox{ is the ribbon $23$, and }\\\nonumber (\lambda/\mu)\wedge(\nu/\rho)&=(\lambda\wedge\nu)/(\nu\wedge\rho)=52/2\hbox{ is the ribbon $32$}.\end{align}\end{example}
  
  \begin{theorem}\label{thm:lpp} \cite[Theorem 5]{lpp} Let $\lambda/\mu$ and $\nu/\rho$ be any two skew shapes. Then we have $$s_{(\lambda/\mu)\vee(\nu/\rho)}s_{(\lambda/\mu)\wedge(\nu/\rho)}
  \geq_s s_{\lambda/\mu}s_{\nu/\rho}.$$ \end{theorem}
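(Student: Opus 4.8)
The plan is to prove the inequality at the level of Schur coefficients by exhibiting a content-preserving injection between pairs of tableaux that additionally respects the lattice (reverse-ballot) structure. Write $U=(\lambda/\mu)\vee(\nu/\rho)$ and $V=(\lambda/\mu)\wedge(\nu/\rho)$. The product $s_{\lambda/\mu}s_{\nu/\rho}$ is the generating function $\sum_{(S,T)}x^{c(S)+c(T)}$ over pairs of SSYT $(S,T)$ with $sh(S)=\lambda/\mu$ and $sh(T)=\nu/\rho$, and likewise $s_U s_V$ is the generating function over pairs $(P,Q)$ of shapes $U$ and $V$. The key reduction is that, applying the Littlewood--Richardson rule (Theorem~\ref{thm:LRrule}) to the disconnected skew shapes $\lambda/\mu \sqcup \nu/\rho$ and $U \sqcup V$, the coefficient of $s_\pi$ in each product equals the number of such pairs of total content $\pi$ whose combined reading word, read in a fixed order across the two components, satisfies the lattice word condition. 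Thus it suffices to produce, for each partition $\pi$, an injection from the lattice pairs on $(\lambda/\mu,\nu/\rho)$ of content $\pi$ into the lattice pairs on $(U,V)$ of content $\pi$.

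First I would overlay the two skew shapes in a common coordinate system, so that in row $i$ the shape $\lambda/\mu$ occupies columns $(\mu_i,\lambda_i]$ and $\nu/\rho$ occupies columns $(\rho_i,\nu_i]$, while $U$ occupies $(\max(\mu_i,\rho_i),\max(\lambda_i,\nu_i)]$ and $V$ occupies $(\min(\mu_i,\rho_i),\min(\lambda_i,\nu_i)]$. Since $\max(x,y)+\min(x,y)=x+y$, the cells of $U$ and $V$ in each row form, as a multiset of columns, exactly the cells of $\lambda/\mu$ and $\nu/\rho$, which makes a cell-preserving redistribution possible. I would then define the map via the cell transfer operation of Lam--Pylyavskyy--Postnikov: wherever only one of the two original shapes occupies a column the entry is carried to whichever of $U$ or $V$ occupies that column, and in the overlap columns the two stacked entries are transferred between $U$ and $V$ by a local, column-respecting switching rule chosen to keep both $P$ and $Q$ semistandard. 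This map is content-preserving by construction, and injective because the redistribution can be inverted from the pair $(P,Q)$.

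The hard part will be verifying that cell transfer sends lattice pairs to lattice pairs (equivalently, that it commutes with the crystal operators $\tilde e_m,\tilde f_m$ acting on the concatenated reading word) while simultaneously preserving column-strictness. Both column-strictness and the reverse-ballot condition are global constraints, so the local switching rule in the overlap region must be analyzed with care: one must show that after transfer no column acquires a repeated or decreasing pair, and that the running counts of each value along the combined reading word are never violated. I expect essentially all of the work to reside here, and the cleanest way to organize it is to realize the transfer as a sequence of elementary jeu-de-taquin slides, for which semistandardness is automatic and the effect on the reading word, hence on the lattice condition, is controlled by Knuth equivalence. Granting this, the injection restricts to the lattice pairs of each content $\pi$, yielding the coefficientwise inequality in the Schur basis and hence $s_U s_V \geq_s s_{\lambda/\mu}s_{\nu/\rho}$.
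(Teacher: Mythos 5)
You should know at the outset that the paper contains no proof of Theorem~\ref{thm:lpp}: it is imported verbatim as \cite[Theorem 5]{lpp} and used as a black box (e.g.\ in the proof of Theorem~\ref{thm:celltr}). So the honest comparison is with the proof in \cite{lpp}, and that proof is nothing like your outline. Lam, Postnikov and Pylyavskyy do not construct a lattice-word-preserving injection; they express each product of skew Schur functions, via Jacobi--Trudi, as a product of complementary minors of a matrix of complete homogeneous symmetric functions, expand the difference of such products in Temperley--Lieb immanants using a theorem of Rhoades and Skandera, and conclude from the Schur positivity of those immanants evaluated on Jacobi--Trudi matrices. The purely combinatorial ``cell transfer'' injection you invoke is known to prove only \emph{monomial} positivity (this is a separate paper of Lam and Pylyavskyy); upgrading it to Schur positivity is exactly what nobody knows how to do, which is why \cite{lpp} goes through immanants.

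That is also where the genuine gap in your proposal lies: everything before your final paragraph is sound but routine (writing $s_{\lambda/\mu}s_{\nu/\rho}$ as the skew Schur function of a disconnected shape, reducing via Theorem~\ref{thm:LRrule} to an injection on lattice fillings of each content, and your row-by-row multiset identity of columns, which is correct), while the step you ask the reader to ``grant'' \emph{is} the theorem. Your proposed way to discharge it --- realizing cell transfer as a sequence of jeu-de-taquin slides so that Knuth equivalence controls the lattice condition --- cannot work as stated. First, the cells vacated and created by a jeu-de-taquin slide depend on the filling, not just on the shape, so no fixed slide sequence carries every filling of $\lambda/\mu \sqcup \nu/\rho$ into the fixed target shape $(\lambda/\mu)\vee(\nu/\rho) \sqcup (\lambda/\mu)\wedge(\nu/\rho)$. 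Second, a content-preserving injection between fillings of these two fixed shapes that preserves rectification (equivalently, lattice-ness, which is what a jeu-de-taquin realization would give you) exists \emph{if and only if} the corresponding Littlewood--Richardson coefficients satisfy the claimed inequalities --- so demanding such a realization is circular: it restates the theorem rather than proving it. Any correct completion along your lines would have to either verify directly that the explicit cell transfer map preserves the lattice condition (not known, and not a local check, since both column-strictness and the ballot condition are global), or abandon the bijective route for the algebraic one actually used in \cite{lpp}.
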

  
  \begin{example} \label{ex:lpp}  Let $\lambda=52$, $\mu=3$, $\nu=53$, and $\rho=2$. Then by Example~\ref{ex:lpppart1} and Theorem \ref{thm:lpp} we have $$r_{23}r_{32}\geq_sr_{22}r_{33}.$$\end{example}

We can now state the main result of this subsection.
 
  \begin{theorem} \label{thm:celltr} Let $\delta$ and $\gamma$ be compositions and let $a'\geq a$. Suppose that either of the following hold:
  \begin{enumerate}
  \item $\ell(\gamma)\geq\ell(\delta)\geq 0$ and $\delta^*$ dominates $\gamma$  up to length $\ell(\delta)$, that is, $$\delta^*_1+\cdots+\delta^*_i\geq\gamma_1+\cdots+\gamma_i\hbox{ for }1\leq i\leq\ell(\delta);\hbox{ or }$$
  \item $0<\ell(\gamma)<\ell(\delta)$ and $\delta^*$ dominates $\gamma$ up to length $(\ell(\gamma)-1)$ and strictly at $\ell(\gamma)$, that is, $$\delta^*_1+\cdots+\delta^*_i\geq\gamma_1+\cdots+\gamma_i\hbox{ for }1\leq i\leq\ell(\gamma)-1\hbox{ and }\delta^*_1+\cdots+\delta^*_{\ell(\gamma)}>\gamma_1+\cdots+\gamma_{\ell(\gamma)}.$$
  \end{enumerate}
  Then $$r_{\delta aa'\gamma}\geq_s r_{\delta a'a\gamma}.$$ \end{theorem}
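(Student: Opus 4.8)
The plan is to turn the difference of two ribbon Schur functions into a difference of products and then invoke the Lam--Pylyavskyy--Postnikov inequality (Theorem~\ref{thm:lpp}). First I would use the near-concatenation identity of Lemma~\ref{lem:connearcon}, splitting each ribbon at the boundary between the two exchanged rows. Since $(\delta a)\cdot(a'\gamma)=\delta a a'\gamma$ and $(\delta a)\odot(a'\gamma)=\delta(a+a')\gamma$, and symmetrically with $a,a'$ interchanged, one gets
\[
r_{\delta a}r_{a'\gamma}=r_{\delta a a'\gamma}+r_{\delta(a+a')\gamma},\qquad r_{\delta a'}r_{a\gamma}=r_{\delta a' a\gamma}+r_{\delta(a+a')\gamma}.
\]
Subtracting, the common summand $r_{\delta(a+a')\gamma}$ cancels and
\[
r_{\delta a a'\gamma}-r_{\delta a' a\gamma}=r_{\delta a}\,r_{a'\gamma}-r_{\delta a'}\,r_{a\gamma},
\]
so it suffices to prove the product inequality $r_{\delta a}r_{a'\gamma}\geq_s r_{\delta a'}r_{a\gamma}$.

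For this I would realize the smaller product as $s_Xs_Y$ and arrange that $X\vee Y$ and $X\wedge Y$ are exactly the ribbons of the larger product, so that Theorem~\ref{thm:lpp} closes the argument. Using $r_{\delta a'}=r_{(\delta a')^\ast}=r_{a'\delta^\ast}$, take $X$ to be the ribbon $a'\delta^\ast$ and $Y$ the ribbon $a\gamma$, both drawn as top-justified skew shapes with the left edges of their first rows (of lengths $a'$ and $a$) aligned. Then the first rows overlap and, in row $i+1$, the row of length $\delta^\ast_i$ coming from $X$ meets the row of length $\gamma_i$ coming from $Y$. Writing out the outer and inner partitions of $X$ and $Y$, a short computation shows that in row $i+1$ the inner coordinates differ by $(\delta^\ast_1+\cdots+\delta^\ast_i)-(\gamma_1+\cdots+\gamma_i)$ and the outer coordinates by the same partial sum taken only up to $i-1$. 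The dominance hypothesis $\delta^\ast_1+\cdots+\delta^\ast_i\geq\gamma_1+\cdots+\gamma_i$ makes both nonnegative, so $Y$'s coordinates dominate $X$'s in every overlapping row; hence the componentwise maximum $X\vee Y$ retains the wider top row $a'$ together with the rows of $Y$, while the componentwise minimum $X\wedge Y$ retains the narrower top row $a$ together with the rows of $X$. In other words $X\vee Y=a'\gamma$ and $X\wedge Y=a\delta^\ast$, and therefore
\[
r_{\delta a}r_{a'\gamma}=r_{a'\gamma}\,r_{a\delta^\ast}=s_{X\vee Y}s_{X\wedge Y}\geq_s s_Xs_Y=r_{\delta a'}r_{a\gamma},
\]
where I have used $r_{a\delta^\ast}=r_{\delta a}$.

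This realization works cleanly in case (1), where $\ell(\gamma)\geq\ell(\delta)$: the rows of $Y$ persist at least as far down as the rows of $X$, the overlap involves exactly the $\delta^\ast$-rows (positions $i=1,\dots,\ell(\delta)$, matching the stated dominance ``up to length $\ell(\delta)$''), and below them only $Y$ survives, so the union is genuinely $a'\gamma$. In case (2), where $0<\ell(\gamma)<\ell(\delta)$, the $\gamma$-rows run out before the $\delta^\ast$-rows, so this direct overlap leaves an uncovered tail of $\delta^\ast$-rows in the union and the shapes no longer line up as above; this is precisely where the strict inequality $\delta^\ast_1+\cdots+\delta^\ast_{\ell(\gamma)}>\gamma_1+\cdots+\gamma_{\ell(\gamma)}$ must be used. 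I would handle it either by passing to a complementary realization (bottom-justified, or transposed through the involution $\omega$, so that the roles of union and intersection are interchanged) or by an induction on $\ell(\delta)$ that peels the extra rows of $\delta$ and reduces to case (1).

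I expect the principal obstacle to be the bookkeeping underlying the second paragraph: verifying directly from the inner/outer coordinates that $X\vee Y$ and $X\wedge Y$ are the \emph{connected} ribbons $a'\gamma$ and $a\delta^\ast$ (rather than disconnected or otherwise misshapen skew diagrams), and confirming that the partial-sum dominance is exactly the condition that guarantees this. The analogous verification in case (2) is the delicate point, since there one must show that the strict inequality at position $\ell(\gamma)$ is precisely what keeps the intersection from being truncated and forces the correct $\delta^\ast$-tail to appear on the intended side.
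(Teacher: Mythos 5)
Your reduction and your Case (1) are exactly the paper's own proof: Lemma~\ref{lem:connearcon} cancels the common term $r_{\delta(a+a')\gamma}$, reducing everything to the product inequality $r_{\delta a}r_{a'\gamma}\geq_s r_{\delta a'}r_{a\gamma}$, and the paper then realizes $a'\delta^*$ and $a\gamma$ precisely as you do, as skew shapes $\lambda/\mu$ and $\nu/\rho$ whose top rows begin in the same column; your partial-sum bookkeeping for the inner and outer coordinates matches the paper's explicit partitions, dominance up to $\ell(\delta)$ gives $\nu_i\geq\lambda_i$ and $\rho_i\geq\mu_i$ on the overlap, the union and intersection come out as $a'\gamma$ and $a\delta^*$, and Theorem~\ref{thm:lpp} finishes. (One minor caution for the write-up: the diagrams must be translated far enough to the right, since for long $\gamma$ a careless choice of starting column makes the inner shape of $a\gamma$ acquire negative parts; this is only a translation issue.)

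The genuine gap is Case (2), which you correctly flag as the delicate point but do not resolve, and neither of your proposed repairs works. Reversing all compositions is vacuous here: since $r_\alpha=r_{\alpha^*}$, reversal returns literally the same inequality, so it cannot trade Case (2) for Case (1); applying $\omega$ yields transposed ribbons that no longer have the form ``$\delta$, two exchanged rows, $\gamma$'' with a usable dominance hypothesis; a bottom-justified alignment forces the rows of $a'\delta^*$ above the overlap either to survive into the intersection (which would then be $a'\delta^*$ rather than $a\delta^*$) or to migrate into the union (which then cannot equal $a'\gamma$); and peeling rows of $\delta$ with Lemma~\ref{lem:connearcon} leaves an expression of the form (positive)(positive) minus (positive), from which Schur-positivity does not follow. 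The idea you are missing --- the paper's actual Case (2) --- is to keep the top-aligned realization but to pad $\nu$ and $\rho$ with \emph{empty rows lying along the outer boundary of} $\lambda/\mu$: one sets $\nu_i=\rho_i=\lambda_i=M-(\delta^*_1+\cdots+\delta^*_{i-2})+(i-1)$ for $\ell(\gamma)+2\leq i\leq\ell(\delta)+1$. In those rows $\max(\lambda_i,\nu_i)=\max(\mu_i,\rho_i)$, so the surplus $\delta^*$-rows vanish from the union, while $\min(\lambda_i,\nu_i)=\lambda_i$ and $\min(\mu_i,\rho_i)=\mu_i$, so they persist in the intersection; hence the union is genuinely $a'\gamma$ and the intersection is $a\delta^*$. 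The strict dominance at $\ell(\gamma)$ enters at exactly one point: it guarantees that the padded sequences are still partitions, namely
$$\rho_{\ell(\gamma)+1}=M-(\gamma_1+\cdots+\gamma_{\ell(\gamma)})+\ell(\gamma)\geq M-(\delta^*_1+\cdots+\delta^*_{\ell(\gamma)})+\ell(\gamma)+1=\rho_{\ell(\gamma)+2},$$
and similarly for $\nu$. Without this padding construction, your argument establishes only Case (1) of the theorem.
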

  
  \begin{example} \label{ex:celltr} \begin{enumerate}
  \item Let $\delta=45$, $\gamma=44$, $a=4$, and $a'=5$. Indeed $\ell(\gamma)\geq\ell(\delta)$ and $\delta^*=54$ dominates $\gamma=44$ up to row $2$. Therefore
  $$r_{45\hbox{ }45\hbox{ }44}\geq_s r_{45\hbox{ }54\hbox{ }44}.$$
  \item Let $\delta=45$, $\gamma=4$, $a=4$, and $a'=5$. Indeed $0<\ell(\gamma)<\ell(\delta)$ and $\delta^*=54$ dominates $\gamma=4$ strictly at row 1. Therefore
  $$r_{45\hbox{ }45\hbox{ }4}\geq_s r_{45\hbox{ }54\hbox{ }4}.$$
  \end{enumerate}\end{example}
  
  Before presenting the proof, as a concrete illustration of it, we work through what happens in these two cases.
  
  \begin{enumerate}
  \item By Lemma \ref{lem:connearcon}, we have that \begin{align}\nonumber r_{454}r_{544}&=r_{454544}+r_{45944}\hbox{ and }\\\nonumber r_{455}r_{444}&=r_{455444}+r_{45944}\end{align} so it suffices to show that \begin{equation}\label{eq:case1ex}r_{454}r_{544}\geq_s r_{455}r_{444}=r_{554}r_{444}.\end{equation}
  By Theorem \ref{thm:lpp}, it remains to express the ribbons $554$ and $444$ on the right side of Equation \eqref{eq:case1ex} as skew shapes $\lambda/\mu$ and $\nu/\rho$ such that ribbons on the left side of Equation \eqref{eq:case1ex} will be their union and intersection.\\
   
The skew shapes will be as follows.
   \begin{align*}\lambda/\mu=a'\delta^*=&\ \tableau{\times&\times&\times&\times&\times&\times&\times&\star&\hspace{0pt}&\hspace{0pt}&\hspace{0pt}&\hspace{0pt}\\\times&\times&\times&\hspace{0pt}&\hspace{0pt}&\hspace{0pt}&\hspace{0pt}&\hspace{0pt}&\\\hspace{0pt}&\hspace{0pt}&\hspace{0pt}&\hspace{0pt}}\\
  \nu/\rho=a\gamma=&\ \tableau{\times&\times&\times&\times&\times&\times&\times&\star&\hspace{0pt}&\hspace{0pt}&\hspace{0pt}\\\times&\times&\times&\times&\hspace{0pt}&\hspace{0pt}&\hspace{0pt}&\hspace{0pt}&\\\times&\hspace{0pt}&\hspace{0pt}&\hspace{0pt}&\hspace{0pt}\\}\end{align*}

Informally, an empty column was added to $\nu/\rho$ to align the starred cells. Then the dominance condition tells us that the $\lambda/\mu$ shape, corresponding to $a'\delta^*$, is more to the left, which causes the union to be $\gamma$ with the longer row $a'$ at the top, and the intersection to be $\delta^*$ with the shorter row $a$ at the top.\\

To be precise, letting $\lambda=12\hbox{ }84$, $\mu=73$, $\nu=11\hbox{ }85$, and $\rho=741$, we have that $\lambda/\mu$ is the ribbon $554=a'\delta^*$ and $\nu/\rho$ is the ribbon $444=a\gamma$. Additionally,
        \begin{align}\nonumber (\lambda/\mu)\vee(\nu/\rho)&=(12\hbox{ }84\vee 11\hbox{ }85)/(73\vee741)=12\hbox{ }85/741\hbox{ is the ribbon }544=a'\gamma\hbox{ and }\\\nonumber(\lambda/\mu)\wedge(\nu/\rho)&=(12\hbox{ }84\wedge 11\hbox{ }85)/(73\wedge741)=11\hbox{ }84/73\hbox{ is the ribbon }454=a\delta^*,\end{align} as desired.\\
    
    \item The second case is similar. By Lemma \ref{lem:connearcon}, we have that \begin{align}\nonumber r_{454}r_{54}&=r_{45454}+r_{4594}\hbox{ and }\\\nonumber r_{455}r_{44}&=r_{45544}+r_{4594}\end{align} so it suffices to show that \begin{equation}\label{eq:case2ex} r_{454}r_{54}\geq_s r_{455}r_{44}=r_{554}r_{44}.\end{equation}
    By Theorem \ref{thm:lpp}, it remains to express the ribbons $554$ and $44$ on the right side of Equation \eqref{eq:case2ex} as skew shapes $\lambda/\mu$ and $\nu/\rho$ such that the ribbons on the left side of Equation \eqref{eq:case2ex} will be their union and intersection.\\
    
     The skew shapes will be as follows.
     \begin{align*}\lambda/\mu=a'\delta^*=&\ \tableau{\times&\times&\times&\times&\times&\times&\times&\star&\hspace{0pt}&\hspace{0pt}&\hspace{0pt}&\hspace{0pt}\\\times&\times&\times&\hspace{0pt}&\hspace{0pt}&\hspace{0pt}&\hspace{0pt}&\hspace{0pt}&\\\hspace{0pt}&\hspace{0pt}&\hspace{0pt}&\hspace{0pt}}\\
    \nu/\rho=a\gamma=&\ \tableau{\times&\times&\times&\times&\times&\times&\times&\star&\hspace{0pt}&\hspace{0pt}&\hspace{0pt}\\\times&\times&\times&\times&\hspace{0pt}&\hspace{0pt}&\hspace{0pt}&\hspace{0pt}&\\\times&\times&\times&\times}\end{align*}

Informally, four empty columns were added to $\nu/\rho$ to align the starred cells. The strict dominance condition also allowed us to add an empty row to $\nu/\rho$ to match the $\lambda/\mu$ shape. This causes the bottom row of the $\lambda/\mu$ shape, corresponding to $a'\delta^*$, to disappear in the union, producing $\gamma$ with the longer row $a'$ at the top; and the bottom row of the $\nu/\rho$ shape to appear in the intersection, producing $\delta^*$ with the shorter row $a$ at the top.\\

To be precise, letting $\lambda=12\hbox{ }84$, $\mu=73$, $\nu=11\hbox{ }84$, and $\rho=744$, we have that $\lambda/\mu$ is the ribbon $554=a'\delta^*$ and $\nu/\rho$ is the ribbon $44=a\gamma$. Additionally,
          \begin{align}\nonumber(\lambda/\mu)\vee(\nu/\rho)&=(12\hbox{ }84\vee 11\hbox{ }84)/(73\vee 744)=12\hbox{ }84/744\hbox{ is the ribbon }54=a'\gamma\hbox{ and }\\\nonumber (\lambda/\mu)\wedge(\nu/\rho)&=(12\hbox{ }84\wedge 11\hbox{ }84)/(73\wedge744)=11\hbox{ }84/73\hbox{ is the ribbon }454=a\delta^*,\end{align} as desired. \\
  \end{enumerate}
  
  Now we present the proof of Theorem \ref{thm:celltr}. 
  
  \begin{proof} By Lemma \ref{lem:connearcon}, we have \begin{align}\nonumber r_{\delta a}r_{a'\gamma}&=r_{\delta aa'\gamma}+r_{\delta(a+a')\gamma}\hbox{ and }\\\nonumber r_{\delta a'}r_{a\gamma}&=r_{\delta a'a\gamma}+r_{\delta(a+a')\gamma},\end{align} so it suffices to show that  
  \begin{equation}\label{eq:case1}r_{a\delta^*}r_{a'\gamma}=r_{\delta a}r_{a'\gamma}\geq_s r_{\delta  a'}r_{a\gamma}=r_{a'\delta^*}r_{a\gamma}.\end{equation} By Theorem \ref{thm:lpp}, it remains to express the ribbons $a'\delta^*$ and $a\gamma$ on the right side of Equation \eqref{eq:case1} as skew shapes $\lambda/\mu$ and $\nu/\rho$ so that the ribbons on the left side of Equation \eqref{eq:case1} will be their union and intersection.\\
  
  \begin{enumerate}
  \item Let $M=|\delta|-\ell(\delta)$ and define partitions
   $\lambda$, $\mu$, $\nu$, and $\rho$ as follows. \begin{itemize}
\item $\lambda_1=M+a'$
\item$\lambda_i=M-(\delta^*_1+\cdots+\delta^*_{i-2})+(i-1)$ for $2\leq i\leq \ell(\delta)+1$\\
\item$\mu_i=M-(\delta^*_1+\cdots+\delta^*_{i-1})+(i-1)$ for $1\leq i\leq\ell(\delta)$\\
\item $\nu_1=M+a$
\item $\nu_i=M-(\gamma_1+\cdots+\gamma_{i-2})+(i-1)$ for $2\leq i\leq\ell(\gamma)+1$\\
\item $\rho_i=M-(\gamma_1+\cdots+\gamma_{i-1})+(i-1)$ for $1\leq i\leq \ell(\gamma)$\\
\end{itemize}

Now $\lambda/\mu$ is the ribbon $a'\delta^*$ and $\nu/\rho$ is the ribbon $a\gamma$. Also, because $\delta^*$ dominates $\gamma$ up to length $\ell(\delta)$, we have that \begin{itemize}\item $\nu_i\geq\lambda_i$ for $2\leq i\leq\ell(\gamma)+1$ and \item $\rho_i\geq\mu_i$ for $1\leq i\leq \ell(\gamma)$.\end{itemize} Therefore $$(\lambda/\mu)\vee(\nu/\rho)=(\lambda\vee\nu)/(\mu\vee\rho)=\hat{\nu}/\rho\hbox{ is the ribbon }a'\gamma,$$ where $\hat{\nu} = (M+a')\nu_2\cdots\nu_{\ell(\nu)}$, and similarly
   $$(\lambda/\mu)\wedge(\nu/\rho)=(\lambda\wedge\nu)/(\mu\wedge\rho)=\check{\lambda}/\mu\hbox{ is the ribbon }a\delta^*,$$ where $\check{\lambda}=(M+a)\lambda_2\cdots\lambda_{\ell(\lambda)}$.\\
   
  \item Let $M=|\delta|-\ell(\delta)$ and define partitions $\lambda$, $\mu$, $\nu$, and $\rho$ as follows.
\begin{itemize}
\item $\lambda_1=M+a'$
\item$\lambda_i=M-(\delta^*_1+\cdots+\delta^*_{i-2})+(i-1)$ for $2\leq i\leq \ell(\delta)+1$\\
\item$\mu_i=M-(\delta^*_1+\cdots+\delta^*_{i-1})+(i-1)$ for $1\leq i\leq\ell(\delta)$\\
\item $\nu_1=M+a$
\item $\nu_i=M-(\gamma_1+\cdots+\gamma_{i-2})+(i-1)$ for $2\leq i\leq\ell(\gamma)+1$
\item $\nu_i=M-(\delta^*_1+\cdots+\delta^*_{i-2})+(i-1)$ for $\ell(\gamma)+2\leq i\leq \ell(\delta)+1$\\
\item $\rho_i=M-(\gamma_1+\cdots+\gamma_{i-1})+(i-1)$ for $1\leq i\leq \ell(\gamma)+1$
\item $\rho_i=M-(\delta^*_1+\cdots+\delta^*_{i-2})+(i-1)$ for $\ell(\gamma)+2\leq i\leq \ell(\delta)+1$\\
\end{itemize}

With this definition it is not clear that $\nu$ and $\rho$ are partitions. However, because $\delta^*$ strictly dominates $\gamma$ at length $\ell(\gamma)$, we have that \begin{align}\nonumber \nu_{\ell(\gamma)+1}\geq\rho_{\ell(\gamma)+1}&=M-(\gamma_1+\cdots+\gamma_{\ell(\gamma)})+\ell(\gamma)\\\nonumber &\geq M-(\delta^*_1+\cdots+\delta^*_{\ell(\gamma)})+(\ell(\gamma)+1)=\rho_{\ell(\gamma)+2}=\nu_{\ell(\gamma)+2},\end{align} so indeed $\nu$ and $\rho$ are partitions.\\

Now $\lambda/\mu$ is the ribbon $a'\delta^*$ and $\nu/\rho$ is the ribbon $a\gamma$. Also, because $\delta^*$ dominates $\gamma$ up to length $\ell(\gamma)$, and also because $\lambda_i=\nu_i=\rho_i$ for $\ell(\gamma)+2\leq i\leq \ell(\delta)+1$ by definition, we have that \begin{itemize}
\item $\nu_i\geq\lambda_i$ for $2\leq i\leq\ell(\delta)+1$ and 
\item $\rho_i\geq\mu_i$ for $1\leq i\leq \ell(\delta)+1$.\end{itemize} Therefore $$(\lambda/\mu)\vee(\nu/\rho)=(\lambda\vee\nu)/(\mu\vee\rho)=\hat{\nu}/\rho\hbox{ is the ribbon }a'\gamma,$$ where $\hat{\nu} = (M+a')\nu_2\cdots\nu_{\ell(\nu)}$, and similarly
   $$(\lambda/\mu)\wedge(\nu/\rho)=(\lambda\wedge\nu)/(\mu\wedge\rho)=\check{\lambda}/\mu\hbox{ is the ribbon }a\delta^*,$$ where $\check{\lambda}=(M+a)\lambda_2\cdots\lambda_{\ell(\lambda)}$. This completes the proof.\end{enumerate}\end{proof}

\section{Totally ordered equitable ribbons}\label{sec:chains}

Our goal for this section is to prove the following theorem, which identifies cases in which $\mathcal{R}((a+1)^na^m)$ is a chain. We will see in Corollary \ref{cor:incompar} of Section \ref{sec:coarsenings} that these are in fact the \emph{only} cases in which $\mathcal{R}((a+1)^na^m)$ is a chain.

\begin{theorem} \label{thm:chains} The partially ordered set $\mathcal{R}((a+1)^na^m)$ of ribbons with $n$ rows of length $(a+1)$ and $m$ rows of length $a$ is a chain in the following cases.\begin{align} \label{chain:1a+1} \mathcal{R}((a+1)^1a^m)&=a^{\lfloor\frac{m}{2}\rfloor}(a+1)a^{\lceil\frac{m}{2}\rceil}>_s\cdots>_sa(a+1)a^{m-1}>_s(a+1)a^m\\\label{chain:1a} \mathcal{R}((a+1)^na^1)&=a(a+1)^n>_s(a+1)a(a+1)^{n-1}>_s\cdots>_s(a+1)^{\lfloor\frac{n}{2}\rfloor} a(a+1)^{\lceil\frac{n}{2}\rceil}\\\label{chain:4}\mathcal{R}((a+1)^2a^2)&=a(a+1)(a+1)a\stackrel{1}{>_s}(a+1)a(a+1)a\stackrel{2}{>_s}(a+1)(a+1)aa\\\nonumber&\stackrel{3}{>_s}(a+1)aa(a+1)\\\label{chain:5}\mathcal{R}((a+1)^2a^3)&=a(a+1)a(a+1)a\stackrel{1}{>_s}a(a+1)(a+1)aa\stackrel{2}{>_s}(a+1)a(a+1)aa\\\nonumber&\stackrel{3}{>_s}(a+1)aa(a+1)a\stackrel{4}{>_s}(a+1)(a+1)aaa\stackrel{5}{>_s}(a+1)aaa(a+1)\end{align} \end{theorem}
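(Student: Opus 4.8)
The plan is to establish each of the four chains by exhibiting, for every consecutive pair in the claimed ordering, a diagrammatic operation from Section~\ref{sec:tools} that certifies Schur-positivity of the relevant difference, and then to argue that the listed ribbons are exhausting the poset so that the strict inequalities assemble into a genuine total order. First I would confirm, via Lemma~\ref{lem:eqrib} and Corollary~\ref{cor:maxforposets}, that the maximal element listed at the left end of each chain really is the box diagonal diagram $\bd$ for the corresponding parameters; this ties the combinatorial endpoint to Conjecture~\ref{conj:maxel} and is reassuring but is not the crux.

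The heart of the argument is the link-by-link verification. For each adjacent pair $\alpha >_s \beta$ I would try to realize $\beta$ from $\alpha$ (or $\alpha$ from $\beta$, after possibly replacing a composition by its reversal using $r_\gamma = r_{\gamma^*}$ from Theorem~\ref{thm:hdl}) by one of two moves: either a single cell-move $M_i$ as in Theorem~\ref{thm:moving1}, where one checks the inequality $\beta_1 \geq \beta_2 + \cdots + \beta_i - i + 1$ on the larger ribbon's first row; or an adjacent-row exchange as in Theorem~\ref{thm:celltr}, where one verifies the appropriate dominance condition on $\delta^*$ against $\gamma$. Since all parts here are $a$ or $(a+1)$, these numerical side-conditions become short inequalities in $a$, and I expect most of them to reduce to $a \geq 1$ or to trivial dominance checks because adjacent rows differ by at most one. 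The two short chains \eqref{chain:4} and \eqref{chain:5} are small enough to handle by treating each numbered step $\stackrel{k}{>_s}$ individually, and I would present a table matching each step to its certifying theorem and the verified hypothesis; for the infinite families \eqref{chain:1a+1} and \eqref{chain:1a} I would instead give a uniform description of the generic link (moving the single long row one position, or exchanging a neighboring short and long row) and check the hypothesis once in terms of the position index.

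There is a subtlety that I expect to be the main obstacle: the theorems from Section~\ref{sec:tools} only deliver weak inequalities $\geq_s$, whereas the statement asserts \emph{strict} relations $>_s$, and moreover a chain requires that no two listed ribbons coincide and that there are no further incomparable elements slipping in between. To get strictness I would exhibit, for each difference, at least one surviving Schur function: Theorem~\ref{thm:moving1} furnishes the explicit expansion $r_\beta - r_\alpha = \sum_\nu c'_{\beta,i,\nu} s_\nu$, so it suffices to produce one restricted LR tableau of shape $\beta$, and similarly the $\vee/\wedge$ mechanism behind Theorem~\ref{thm:celltr} gives a strict gain whenever the union and intersection genuinely differ from the original pair, which the dominance hypotheses guarantee. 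Distinctness of the ribbons follows from Corollary~\ref{cor:ineq}, since an equitable ribbon's class contains only itself and its reversal, and none of the listed compositions is the reversal of another in its list.

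Finally, I would address exhaustiveness: the poset $\mathcal{R}((a+1)^n a^m)$ has a fixed, finite number of elements (the distinct arrangements of $n$ long and $m$ short rows, modulo reversal via Reduction~1), and for the four parameter regimes in question this count matches the number of ribbons I have listed. Thus once every consecutive difference is shown Schur-positive and strict, transitivity of $\leq_s$ upgrades the listed sequence to a saturated chain that contains \emph{all} elements of the poset, proving $\mathcal{R}((a+1)^n a^m)$ is itself a chain. I anticipate that verifying the hypotheses of Theorems~\ref{thm:moving1} and~\ref{thm:celltr} is routine, and that the genuine bookkeeping effort lies in correctly orienting each link (choosing whether to apply a move to $\alpha$ or to $\beta^*$) and in confirming the element counts so that no arrangement is accidentally omitted.
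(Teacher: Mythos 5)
Your plan's central assumption---that every consecutive link in the four chains can be certified by a single application of Theorem~\ref{thm:moving1} or Theorem~\ref{thm:celltr}, possibly after reversal---is false, and this is exactly where the real difficulty of the theorem lies. The paper does handle all of Chains~\eqref{chain:1a+1} and~\eqref{chain:1a}, relations 1--2 of Chain~\eqref{chain:4}, and relations 1--3 of Chain~\eqref{chain:5} by Theorem~\ref{thm:celltr} (Lemma~\ref{lem:celltrchains}), but three links resist both tools. Take relation 3 of Chain~\eqref{chain:4}, $r_{(a+1)(a+1)aa}>_s r_{(a+1)aa(a+1)}$: the long row moves from position $2$ to position $4$, so this is not an adjacent exchange (and Theorem~\ref{thm:celltr} cannot be configured for it, since with $\gamma=\emptyset$ and $\ell(\delta)=2$ neither dominance case applies); the only way to realize one composition from the other by a cell move, up to reversal, is $M_3\bigl((a+1)aa(a+1)\bigr)=aa(a+1)(a+1)$, whose hypothesis reads $a\geq a+(a+1)-2$, i.e.\ $a\leq 1$, so it fails for every $a\geq 2$. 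The paper's fix (Lemma~\ref{lem:adversaries}) is an idea your plan never invokes: multiply by $r_{(a+1)}$ and use Lemma~\ref{lem:connearcon} together with $r_{(a+1)aa}=r_{aa(a+1)}$ to rewrite the difference as $r_{(2a+1)a(a+1)}-r_{(2a+2)aa}$, a difference of \emph{different} ribbons to which Theorem~\ref{thm:moving1} does apply. Worse, for relations 4 and 5 of Chain~\eqref{chain:5} even this trick is insufficient: the difference becomes a signed combination of Schur-positive expressions (in the paper's notation, $G_1-G_2-G_3$), and to prove positivity one must explicitly compute the Schur expansions of $G_2$ and $G_3$ and then construct, case by case in $(x,y)$, restricted LR tableaux of the shapes governed by Theorem~\ref{thm:moving1cor} witnessing $G_1\geq_s G_2+G_3$. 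That adversarial enumeration occupies most of the paper's proof and has no counterpart in your outline; ``routine hypothesis checking'' will not produce it.

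Two smaller points. First, your strictness/distinctness argument leans on Corollary~\ref{cor:ineq}, but that corollary is about \emph{equitable} ribbons; when $a=1$ the elements of $\mathcal{R}(2^n1^m)$ need not be column-equitable, and the paper's Lemma~\ref{lem:strict} gives separate factorization arguments (via Theorem~\ref{thm:hdl} and Equation~\eqref{eq:circsize}) for that case. Second, your mechanism for strictness (exhibiting a surviving Schur function in each difference) is workable in principle but unnecessary: once the listed ribbon Schur functions are pairwise distinct, weak inequalities $\geq_s$ are automatically strict, which is how the paper proceeds. Your exhaustiveness-by-counting step matches the paper's implicit treatment and is fine.
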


\begin{remark} \label{rem:chainsmaxforposets} According to \eqref{eq:chain1a+1} through \eqref{eq:chain5} of Corollary \ref{cor:maxforposets}, Theorem \ref{thm:chains} confirms that for each of the four posets above, the box diagonal diagram is indeed the unique maximal element, confirming Conjecture~\ref{conj:maxel} in these cases, which turn out to be all chains. Theorem \ref{thm:chains} also confirms Conjecture~\ref{conj:minel} in these cases. \end{remark}

 We first establish the strictness of the inequalities above.

\begin{lemma} \label{lem:strict} The inequalities in Theorem \ref{thm:chains} are all strict. \end{lemma}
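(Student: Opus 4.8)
The plan is to reduce strictness to the nonvanishing of a difference of ribbon Schur functions, and then to settle that nonvanishing using the classification of equivalence classes of equitable ribbons. Granting the non-strict relations $r_A \geq_s r_B$ (which are established in the remainder of the proof of Theorem~\ref{thm:chains}), the inequality $A >_s B$ fails to be strict precisely when $s_A - s_B = 0$, i.e.\ when $r_A = r_B$. So the whole lemma is equivalent to the assertion that consecutive ribbons in each of the chains \eqref{chain:1a+1}--\eqref{chain:5} have distinct ribbon Schur functions.

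The key input is Corollary~\ref{cor:ineq}: for an equitable ribbon $\alpha$, the only ribbon $\beta$ with $r_\alpha = r_\beta$ is $\alpha$ itself or its reversal $\alpha^\ast$. Every ribbon in Theorem~\ref{thm:chains} is equitable, so for a consecutive pair $A,B$ we have $r_A = r_B$ if and only if $B = A$ or $B = A^\ast$. Thus it remains to check the purely combinatorial statement that no ribbon in a chain equals, or is the reversal of, its neighbour.

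For the two infinite families this is a short general-position argument. In \eqref{chain:1a+1} every ribbon has a single long row, whose position $p$ counted from the top ranges over $1 \leq p \leq \lfloor \frac{m}{2}\rfloor + 1$ along the chain; reversal sends the long row from position $p$ to position $m + 2 - p$. Within this range the equation $p' = m + 2 - p$ forces $p = p'$, so no two of these ribbons are reversals of one another, and consecutive ones are plainly distinct. The family \eqref{chain:1a} is handled identically, applying the same count to its single short row (or via the transpose involution $\omega$). For the two finite chains \eqref{chain:4} and \eqref{chain:5} we simply inspect each consecutive pair against the reversals of the listed ribbons; for instance $((a+1)a(a+1)a)^\ast = a(a+1)a(a+1)$ and $((a+1)(a+1)aa)^\ast = aa(a+1)(a+1)$, and each such reversal is seen to differ from both neighbours, so $r_A \neq r_B$ in every case.

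The only point requiring any care is the presence of palindromic ribbons, such as $a(a+1)(a+1)a$ and $(a+1)aa(a+1)$ in \eqref{chain:4} and $a(a+1)a(a+1)a$ in \eqref{chain:5}, for which $A = A^\ast$; since such an $A$ coincides only with its own reversal and never with a neighbour, it produces no collision. Combining these checks with the relations $r_A \geq_s r_B$ yields $r_A >_s r_B$ for every listed pair, and strictness along each entire chain then follows by transitivity.
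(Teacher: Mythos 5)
Your argument for $a\geq 2$ is exactly the paper's: equitability plus Corollary~\ref{cor:ineq} reduces strictness to checking that no ribbon in a chain is the reversal of a neighbour, and that check goes through as you describe. The genuine gap is your claim that ``every ribbon in Theorem~\ref{thm:chains} is equitable.'' This is false when $a=1$, a case the theorem and lemma do cover (the paper retains $a=1$ throughout; see, e.g., the convention $s_{34}=0$ in the proof of Lemma~\ref{lem:adversaries}). Equitable means both row- \emph{and} column-equitable, and Corollary~\ref{cor:ineq} genuinely needs both: its proof disposes of the problematic case where $a=1$ and the first row has length $1$ by applying the $\omega$ involution to the \emph{columns}. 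For $a=1$ the ribbons in the chains are row-equitable but generally not column-equitable; for instance $(a+1)a^m=21^m$ in Chain~\eqref{chain:1a+1} has one column of length $m+1$ and one of length $1$. So Corollary~\ref{cor:ineq} simply does not apply to them, and your reduction of $r_A=r_B$ to $B\in\{A,A^*\}$ is unjustified in that case.

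This is not a removable technicality, because among ribbons with parts in $\{1,2\}$ there really are equalities of ribbon Schur functions beyond reversal. By Theorem~\ref{thm:hdl}, taking the nonsymmetric factors $\beta=12$ and $\gamma=1121$ gives $r_{12\circ 1121}=r_{21\circ 1121}$, that is, $r_{11211122121}=r_{11221211121}$, and these two compositions are not reversals of each other. So for $a=1$ one must argue, as the paper does, that the specific ribbons occurring in the four chains admit no factorization $\beta\circ\gamma$ into two nonsymmetric compositions: in Chain~\eqref{chain:1a+1} a near-concatenation would account for the unique part of size $2$, forcing $\gamma$ to be all $1$'s; in Chain~\eqref{chain:1a} a near-concatenation would create a part $\gamma_1+\gamma_{\ell(\gamma)}=\alpha_1+\alpha_{\ell(\alpha)}\geq 3$; and in Chains~\eqref{chain:4} and~\eqref{chain:5} the cell counts $N=6$ and $N=7$ admit no factorization (via Equation~\eqref{eq:circsize}) into two factors each large enough to carry a nonsymmetric composition. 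Your proof needs this entire second half.
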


\begin{proof} First note that if $a\geq 2$, then all of the ribbons above are equitable, so by Corollary~\ref{cor:ineq} could only be equal to their reverse, but since none of the ribbons are reverses of each other the result follows. Therefore it remains to consider the $a=1$ case. We will show that in each chain, any element $\alpha$ can not factor as a composition $\beta\circ\gamma$ of two nonsymmetric factors, and therefore again the equivalence class of $\alpha$ will only contain itself and its reverse.

\begin{enumerate}
\item Chain \eqref{chain:1a+1}: If $\beta$ is nonsymmetric then it can not be all $1$'s, so there must be a near-concatenation of $\gamma$ within $\alpha$, giving rise to a part at least $2$. But this accounts for the single $2$ in $\alpha$, leaving $\gamma$ to be all $1$'s and not nonsymmetric.\\

\item Chain \eqref{chain:1a}: Note that if $n=0$ the result is trivial. Now if $\beta$ is nonsymmetric then it can not be all $1$'s, so there must be a near-concatenation of $\gamma$ with $\alpha$. However, $\gamma_1+\gamma_{\ell(\gamma)}=\alpha_1+\alpha_{\ell(\alpha)}\geq 3$, so is not a part of $\alpha$.\\

\item Chain \eqref{chain:4}: By Equation~\eqref{eq:circsize}, the total number of cells would have to factor in a way such that both factors admit nonsymmetric compositions, but neither $6=6\cdot 1$ nor $6=3\cdot 2$ satisfy this.\\

\item Chain \eqref{chain:5}: By Equation~\eqref{eq:circsize}, the total number of cells would have to factor in a way such that both factors admit nonsymmetric compositions, but $7=7\cdot 1$ does not satisfy this. \end{enumerate}\end{proof}

We now establish the order relations in the next two lemmas, from which Theorem \ref{thm:chains} will follow immediately.

\begin{lemma} \label{lem:celltrchains} The following relations hold.
\begin{enumerate}
\item All relations in Chain \eqref{chain:1a+1}
\item All relations in Chain \eqref{chain:1a}
\item Relation 1 in Chain \eqref{chain:4}
\item Relation 2 in Chain \eqref{chain:4}
\item Relation 1 in Chain \eqref{chain:5}
\item Relation 2 in Chain \eqref{chain:5}
\item Relation 3 in Chain \eqref{chain:5}
\end{enumerate}
\end{lemma}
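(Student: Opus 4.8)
The statement to prove is Lemma~\ref{lem:celltrchains}, which asserts that seven specific order relations among equitable ribbons hold. Here is my plan.

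\textbf{Overall strategy.} Each relation in this lemma compares two equitable ribbons that differ by swapping two adjacent rows of unequal length (an $a$ and an $(a+1)$). This is precisely the situation addressed by Theorem~\ref{thm:celltr}: if I can write each claimed relation $r_{\sigma}\geq_s r_{\tau}$ in the form $r_{\delta a a' \gamma}\geq_s r_{\delta a' a \gamma}$ with $a'=a+1>a$, then I only need to verify the dominance hypothesis on $\delta^*$ and $\gamma$. So the plan is, relation by relation, to (i) identify the prefix $\delta$ and suffix $\gamma$ flanking the swapped pair, and (ii) check the appropriate dominance condition from Theorem~\ref{thm:celltr}, choosing Case~(1) when $\ell(\gamma)\geq\ell(\delta)$ and Case~(2) when $0<\ell(\gamma)<\ell(\delta)$. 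Because $\delta^*$ has parts that are each $a$ or $(a+1)$ and $\gamma$ has parts each $a$ or $(a+1)$, the partial-sum comparison $\delta^*_1+\cdots+\delta^*_i\geq\gamma_1+\cdots+\gamma_i$ reduces to a simple bookkeeping of how many long rows appear in the first $i$ parts of each side.

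\textbf{Carrying it out.} For the two infinite families, Chains \eqref{chain:1a+1} and \eqref{chain:1a}, I would treat the general consecutive pair. In Chain \eqref{chain:1a+1} each step moves the single long row one position to the left; here $\delta=a^k$ for some $k$ and $\gamma=a^j$ consists only of short rows, so $\delta^*=a^k$ and the dominance inequality $\delta^*_1+\cdots+\delta^*_i \geq \gamma_1+\cdots+\gamma_i$ is an equality at each step and trivially holds. Chain \eqref{chain:1a} is the mirror image, with the single short row moving rightward through a block of long rows; here $\delta$ consists of long rows and I again verify the partial sums, picking Case~(1) or Case~(2) according to whether $\ell(\gamma)\geq\ell(\delta)$. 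For the finite Relations~3,4,5 of Chain~\eqref{chain:4} and Relations~7,8,9,10,11 of Chain~\eqref{chain:5} listed here (namely items (3)--(7) of the lemma), each is a single adjacent transposition of an $a$ past an $(a+1)$, and I would tabulate $\delta$, $\gamma$, $\delta^*$ for each and confirm the dominance by direct inspection; since the parts are all $a$ or $a+1$, one long row on the $\delta^*$ side dominating the corresponding prefix of $\gamma$ is immediate whenever $\delta^*$ carries at least as many long rows in each initial segment.

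\textbf{The main obstacle.} The delicate point is choosing the correct case of Theorem~\ref{thm:celltr} and, in Case~(2), verifying the \emph{strict} dominance at length $\ell(\gamma)$ rather than mere weak dominance. When $\gamma$ is short (consists of $a$'s) and $\delta^*$ contains a long row within its first $\ell(\gamma)$ parts, strictness is automatic; but I must check there is no boundary relation where $\delta^*$ and $\gamma$ have identical short prefixes, which would make Case~(2) inapplicable and force Case~(1)—this requires confirming $\ell(\gamma)\geq\ell(\delta)$ in exactly those instances. I expect this case-selection to be the only real content; once the correct case is identified, the dominance checks are routine because all row lengths lie in $\{a,a+1\}$. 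The relations \emph{not} covered here (Relation~3 of Chain~\eqref{chain:4} and the remaining relations of Chain~\eqref{chain:5}) presumably require the cell-moving operation of Theorem~\ref{thm:moving1}/\ref{thm:moving1cor} instead and will be handled in a companion lemma, so I would note that these seven are exactly the transposition-type relations to which Theorem~\ref{thm:celltr} applies.
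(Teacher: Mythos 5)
Your overall strategy coincides with the paper's: each of the seven relations is an application of Theorem~\ref{thm:celltr} with $a'=a+1$, and your handling of Chains~\eqref{chain:1a+1} and \eqref{chain:1a} and of items (3)--(6) is sound (the paper's choices there are $\delta=a^{i-1},\gamma=a^{m-i}$; $\delta=(a+1)^{i-1},\gamma=(a+1)^{n-i}$; $\delta=\emptyset,\gamma=(a+1)a$; $\delta=(a+1),\gamma=a$; $\delta=a(a+1),\gamma=a$; $\delta=\emptyset,\gamma=(a+1)aa$, matching what your tabulation would produce).

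However, there is a genuine gap at item (7), Relation 3 of Chain~\eqref{chain:5}, namely $r_{(a+1)a(a+1)aa}\geq_s r_{(a+1)aa(a+1)a}$. Your recipe --- take $\delta$ to be the common prefix and $\gamma$ the common suffix flanking the swapped pair --- gives $\delta=(a+1)a$ and $\gamma=a$, but then the \emph{larger} ribbon is $\delta a'a\gamma$ and the \emph{smaller} is $\delta aa'\gamma$: the transposition goes the wrong way relative to Theorem~\ref{thm:celltr}, which places the short row first in the larger ribbon. Consistently, the hypotheses also fail for this choice: $\ell(\gamma)=1<2=\ell(\delta)$ forces Case~(2), and strict dominance at length $1$ fails because $\delta^*_1=a=\gamma_1$. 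Note that your ``main obstacle'' paragraph proposes that such a failure of strictness forces Case~(1), but here $\ell(\gamma)<\ell(\delta)$, so neither case applies and your plan stalls. The missing idea is to first replace both ribbons by their reverses, using $r_\alpha=r_{\alpha^*}$ (Theorem~\ref{thm:hdl}): the relation becomes $r_{aa(a+1)a(a+1)}\geq_s r_{a(a+1)aa(a+1)}$, which is Theorem~\ref{thm:celltr}, Case~(1), with $\delta=a$ and $\gamma=a(a+1)$, since $\ell(\gamma)=2\geq 1=\ell(\delta)$ and $\delta^*_1=a\geq a=\gamma_1$. This reversal is implicit in the paper's choice ``$\delta=a$, $\gamma=a(a+1)$'' for this relation. (Your renumbering ``Relations 3,4,5 of Chain~\eqref{chain:4} and Relations 7,8,9,10,11 of Chain~\eqref{chain:5}'' is also off, as those chains have only three and five relations respectively, but your parenthetical identification of items (3)--(7) of the lemma is the correct set, so that slip is cosmetic.)
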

\begin{proof} These follow from Theorem \ref{thm:celltr} with $a'=a+1$ and by setting $\delta$ and $\gamma$ as follows.
\begin{enumerate}
\item Use Case 1 with $\delta=a^{i-1}$ and $\gamma=a^{m-i}$ for $1\leq i\leq\lfloor\frac{m}{2}\rfloor$.
\item Use Case 1 with $\delta=(a+1)^{i-1}$ and $\gamma=(a+1)^{n-i}$ for $1\leq i\leq \lfloor\frac{n}{2}\rfloor$.
\item Use Case 1 with $\delta=\emptyset$ and $\gamma=(a+1)a$.
\item Use Case 1 with $\delta=(a+1)$ and $\gamma=a$.
\item Use Case 2 with $\delta=a(a+1)$ and $\gamma=a$.
\item Use Case 1 with $\delta=\emptyset$ and $\gamma=(a+1)aa$.
\item Use Case 1 with $\delta=a$ and $\gamma=a(a+1)$.
\end{enumerate}\end{proof}

There are now three remaining relations to establish. We will use a combination of Lemma \ref{lem:connearcon} and Theorem \ref{thm:moving1}, along with its generalization Theorem \ref{thm:moving1cor} to simplify  differences. Then we use the Littlewood-Richardson rule to calculate the Schur function expansions.

\begin{lemma} \label{lem:adversaries} The following relations hold.
\begin{enumerate}
\item Relation 3 in Chain \eqref{chain:4}
\item Relation 4 in Chain \eqref{chain:5}
\item Relation 5 in Chain \eqref{chain:5}
\end{enumerate}
\end{lemma}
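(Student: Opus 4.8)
The plan is to establish each of the three relations---Relation 3 in Chain \eqref{chain:4}, and Relations 4 and 5 in Chain \eqref{chain:5}---by the strategy announced just before Lemma~\ref{lem:adversaries}: use Lemma~\ref{lem:connearcon} together with Theorems~\ref{thm:moving1} and \ref{thm:moving1cor} to reduce each difference to a manageable expression, and then compute an explicit Schur expansion via the Littlewood-Richardson rule to verify positivity. The reason these three relations were separated out from Lemma~\ref{lem:celltrchains} is that they cannot be obtained by the row-exchange operation of Theorem~\ref{thm:celltr}; indeed, comparing $(a+1)(a+1)aa$ with $(a+1)aa(a+1)$ in Chain~\eqref{chain:4}, the composition is not simply a swap of two adjacent rows, so a genuinely different mechanism---moving a cell from the first row---is needed.

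For Relation 3 in Chain~\eqref{chain:4}, namely $r_{(a+1)(a+1)aa}\geq_s r_{(a+1)aa(a+1)}$, I would first try to recognize whether $(a+1)aa(a+1)$ arises as $M_i(\alpha)$ (or an iterate $M_i(M_j(\alpha))$) for $\alpha$ a composition whose first part is long, so that Theorem~\ref{thm:moving1} or \ref{thm:moving1cor} applies directly. When the two ribbons do not line up as a single $M_i$-image, the fallback is to apply Lemma~\ref{lem:connearcon} to rewrite each ribbon Schur function as a difference of products $r_\mu r_\nu - r_{\mu\odot\nu}$, splitting the four-row ribbon at a well-chosen row so that the near-concatenation terms cancel between the two sides of the desired inequality; what remains is a comparison that Theorem~\ref{thm:moving1cor} can settle, its right-hand side being the manifestly Schur-positive sum $\sum_\nu c'_{\beta,i,j,\nu}s_\nu$. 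I would carry out the same two relations in Chain~\eqref{chain:5} in parallel, since $(a+1)(a+1)aaa$ versus $(a+1)aaa(a+1)$ and $(a+1)aa(a+1)a$ versus $(a+1)(a+1)aaa$ have the same flavor: a long row at the front is being transported toward an interior or end position, exactly the situation the $M_i$ operation models.

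The main obstacle I anticipate is the bookkeeping needed to match each target difference to the precise hypothesis $\beta_1\geq\beta_2+\cdots+\beta_i-i+1$ (or its length-$j$ analogue) of Theorems~\ref{thm:moving1} and \ref{thm:moving1cor}. Because the ribbons here begin with a row of length only $(a+1)$ rather than a genuinely long first row, the inequality constraint is tight, and it will likely force me to first apply Lemma~\ref{lem:connearcon} to isolate a subribbon whose leading part is large enough for the hypothesis to hold, or to handle the boundary cases $a=1$ separately, since several of these relations are claimed to persist even when $a=1$ where the equitability arguments degenerate. I would therefore structure the proof case by case, in each case first writing the explicit factorization from Lemma~\ref{lem:connearcon}, then checking the numerical hypothesis of Theorem~\ref{thm:moving1cor}, and finally quoting the resulting nonnegative Schur expansion; where the theorems do not apply cleanly, I expect to fall back on a direct Littlewood-Richardson count, exhibiting an injection from the LR tableaux of the smaller ribbon into those of the larger one and describing the tableaux left uncovered.
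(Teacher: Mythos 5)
Your toolkit is the one the paper itself announces just before this lemma (Lemma~\ref{lem:connearcon} plus Theorems~\ref{thm:moving1} and~\ref{thm:moving1cor}, then explicit Littlewood--Richardson computations), and your outline does essentially work for Relation 3 in Chain~\eqref{chain:4}: splitting off the first row, it is the \emph{product} terms that cancel (not the near-concatenation terms, as you write---one has $r_{(a+1)}r_{(a+1)aa}=r_{(a+1)}r_{aa(a+1)}$ by reversal invariance), leaving $r_{(2a+1)a(a+1)}-r_{(2a+2)aa}$, which is Schur-positive by Theorem~\ref{thm:moving1} since $(2a+1)a(a+1)=M_3((2a+2)aa)$ and $2a+1\geq 2a-1$. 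Note also that your framing of the $M_i$ operation as ``transporting a long row'' is off: the ribbons being compared have identical multisets of row lengths, so no $M_i$ relates them directly; the moving theorems only ever apply to the auxiliary ribbons produced by the Lemma~\ref{lem:connearcon} cancellation.

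For the two relations in Chain~\eqref{chain:5}, however, there is a genuine gap: your expectation that ``what remains is a comparison that Theorem~\ref{thm:moving1cor} can settle'' is false. For Relation 5 the same cancellation leaves $r_{(2a+1)aa(a+1)}-r_{(2a+2)aaa}$, and although $(2a+1)aa(a+1)=M_4((2a+2)aaa)$, the hypothesis of Theorem~\ref{thm:moving1} would require $2a+1\geq a+a+(a+1)-3=3a-2$, which fails for every $a\geq 4$. The paper's actual proof rewrites this as a \emph{difference of two Schur-positive quantities}, $F_1-F_2$ with $F_1=r_{(2a+1)a(a+1)a}-r_{(2a+2)aaa}$ and $F_2=r_{(2a+1)a(a+1)a}-r_{(2a+1)aa(a+1)}$, computes $F_2$ in closed form as a sum of Schur functions indexed by pairs $(x,y)$, and then constructs, case by case, an explicit restricted LR tableau of shape $(2a+1)a(a+1)a$ realizing each such term inside $F_1$. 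Relation 4 is harder still: no two-factor split suffices, and the paper resorts to the triple products $r_{(a+1)}r_{aa(a+1)}r_a=r_{(a+1)}r_{(a+1)aa}r_a$, an inclusion-exclusion reorganization into $G_1-G_2-G_3$, closed-form evaluations of $G_2$ and $G_3$, and another multi-case tableau construction to show $G_1\geq_s G_2+G_3$. None of this appears in your proposal; your fallback (``exhibit an injection from the LR tableaux of the smaller ribbon into those of the larger one'') is exactly where all the work lies, is left entirely unconstructed, and---phrased at the level of the two original ribbons rather than the auxiliary differences---is an even harder combinatorial task than the one the paper actually carries out.
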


\begin{proof} We first prove Parts 1 and 3 before proving Part 2, which is the most intricate.
\begin{enumerate}
 \item[(1)] We show that $r_{(a+1)(a+1)aa}\geq_s r_{(a+1)aa(a+1)}$. By Lemma \ref{lem:connearcon}, we have that \begin{align}\nonumber r_{(a+1)}r_{(a+1)aa}&=r_{(a+1)(a+1)aa}+r_{(2a+2)aa}\hbox{ and }\\\nonumber r_{(a+1)}r_{aa(a+1)}&=r_{(a+1)aa(a+1)}+r_{(2a+1)a(a+1)}.\end{align} Because the left sides are equal by Theorem~\ref{thm:hdl}, we now have $$r_{(a+1)(a+1)aa}-r_{(a+1)aa(a+1)}=r_{(2a+1)a(a+1)}-r_{(2a+2)aa}\geq_s 0$$by Theorem \ref{thm:moving1}.\\
 
\item[(3)] We show that $r_{(a+1)(a+1)aaa}\geq_s r_{(a+1)aaa(a+1)}$. Throughout this proof our diagrams illustrate the concrete case when $a=4$; however, the number of cells in each row in each diagram can be scaled appropriately to illustrate the generic case.
 
 By Lemma \ref{lem:connearcon}, we have that 
 \begin{align} \nonumber r_{(a+1)}r_{(a+1)aaa}&=r_{(a+1)(a+1)aaa}+r_{(2a+2)aaa}\hbox{ and }\\\nonumber r_{(a+1)}r_{aaa(a+1)}&=r_{(a+1)aaa(a+1)}+r_{(2a+1)aa(a+1)}.\end{align} Because the left sides are equal by Theorem~\ref{thm:hdl}, we have that
 \begin{align} \nonumber r_{(a+1)(a+1)aaa}-r_{(a+1)aaa(a+1)}&=r_{(2a+1)aa(a+1)}-r_{(2a+2)aaa}\\\nonumber& =\left(r_{(2a+1)a(a+1)a}-r_{(2a+2)aaa}\right)-\left(r_{(2a+1)a(a+1)a}-r_{(2a+1)aa(a+1)}\right).\end{align} 
 By Theorem \ref{thm:moving1}, 
 $$F_1=r_{(2a+1)a(a+1)a}-r_{(2a+2)aaa}=\sum_\nu c'_{(2a+1)a(a+1)(a+1),3,\nu}s_\nu$$is Schur-positive and is illustrated by the following diagram.\begin{equation}\label{diag:adv2,a}\tableau{&&&&&&&&&&1&1&1&1&1&1&1&1&1\\&&&&&&&\hspace{0pt}&\hspace{0pt}&\hspace{0pt}&2\\&&&\ast&\bullet&\hspace{0pt}&\hspace{0pt}&\hspace{0pt}\\\hspace{0pt}&\hspace{0pt}&\hspace{0pt}&\bullet}\end{equation}
 
 Similarly, by Theorem \ref{thm:moving1}, $$F_2=r_{(2a+1)a(a+1)a}-r_{(2a+1)aa(a+1)}=r_{a(a+1)a(2a+1)}-r_{(a+1)aa(2a+1)}$$ is Schur-positive and is illustrated by the following diagram. $$\tableau{&&&&&&&&&&&&&&&1&1&1&1\\
 &&&&&&&&&&&\ast&\bullet&\hspace{0pt}&\hspace{0pt}&2\\
 &&&&&&&&\hspace{0pt}&\hspace{0pt}&\hspace{0pt}&\bullet\\
\hspace{0pt}&\hspace{0pt}&\hspace{0pt}&\hspace{0pt}&\hspace{0pt}&\hspace{0pt}&\hspace{0pt}&\hspace{0pt}&\hspace{0pt}}$$\\
 
 Here if $\ast=2$ there would be too many $2$'s in the second row, so $\ast=1$ and the right bullet is at least the bullet below, so must both be $2$'s. Then the third row can not have any more $2$'s so must have $(a-1)$ $1$'s. The fourth row will then have some number of $1$'s, some number $0\leq x\leq a-1$ of $2$'s, and some number $0\leq y\leq a+1$ of $3$'s. We can not have $x=y=0$ otherwise there would be a $1$ below a $1$. So
 $$F_2=\sum_{(x,y)\in A}s_{(4a+1-x-y) (a+1+x)y},\hbox{ where}$$ $$A=\{(x,y):\hbox{ }0\leq x\leq a-1,\hbox{ }0\leq y\leq a+1,\hbox{ }(x,y)\neq(0,0)\}.$$
 
 It remains to find restricted LR tableaux of shape $(2a+1)a(a+1)a$, as illustrated in Diagram \eqref{diag:adv2,a}, with content $(4a+1-x-y)(a+1+x)y$ for each $(x,y)\in A$. \\

 If $y=0$, then since $1\leq x\leq a-1$, we can form the restricted LR tableaux as follows, with $x$ $2$'s  {in total} in the fourth row.

 $$\tableau{&&&&&&&&&&1&1&1
&1&1&1&1&1&1\\&&&&&&&1&1&1&2\\&&&1&2&2&2&2\\1&\hspace{0pt}&\hspace{0pt}&2}$$\\
 
 If $x=0$ and $y=1$, then form the restricted LR tableau as follows.
 
  $$\tableau{&&&&&&&&&&1&1&1&1&1&1&1&1&1\\&&&&&&&1&1&1&2\\&&&1&2&2&2&3\\1&1&1&2}$$\\
  
   If $x=0$ and $2\leq y\leq a+1$, then form the restricted LR tableaux as follows, with $(y-1)$ $3$'s  {in total} preceded by $1$'s in the fourth row.
 
  $$\tableau{&&&&&&&&&&1&1&1&1&1&1&1&1&1\\&&&&&&&1&1&1&2\\&&&2&2&2&2&3\\\hspace{0pt}&\hspace{0pt}&\hspace{0pt}&3}$$\\
  
  If $x\neq 0$ and $y=a+1$, then form the restricted LR tableaux as follows, with $x$ additional $2$'s preceded by $1$'s in the second row.
  
  $$\tableau{&&&&&&&&&&1&1&1&1&1&1&1&1&1\\&&&&&&&\hspace{0pt}&\hspace{0pt}&\hspace{0pt}&2\\&&&2&2&2&2&3\\3&3&3&3}$$\\
  
  Otherwise, $x\neq 0$ and $1\leq y\leq a$, and we form the restricted LR tableaux as follows, with $x$ $2$'s  {in total} preceded by $1$'s in the second row, and $y$ $3$'s in total preceded by $1$'s, in the fourth row.
  
  $$\tableau{&&&&&&&&&&1&1&1&1&1&1&1&1&1\\&&&&&&&1&\hspace{0pt}&\hspace{0pt}&2\\&&&2&2&2&2&2\\\hspace{0pt}&\hspace{0pt}&\hspace{0pt}&3}$$\
 
 This completes the proof of this part.\\

 \item[(2)] We show that $r_{(a+1)aa(a+1)a}\geq_s r_{(a+1)(a+1)aaa}$. Throughout this proof our diagrams illustrate the concrete case when $a=4$; however, the number of cells in each row in each diagram can be scaled appropriately to illustrate the generic case.
 
 By Lemma \ref{lem:connearcon}, we have that 
 \begin{align}\nonumber r_{(a+1)}r_{aa(a+1)}r_a&=r_{(a+1)aa(a+1)a}+r_{(a+1)aa(2a+1)}+r_{(2a+1)a(a+1)a}+r_{(2a+1)a(2a+1)}\hbox{ and }\\\nonumber r_{(a+1)}r_{(a+1)aa}r_a&=r_{(a+1)(a+1)aaa}+r_{(a+1)(a+1)a(2a)}+r_{(2a+2)aaa}+r_{(2a+2)a(2a)}.\end{align} 
 Because the left sides are equal by Theorem~\ref{thm:hdl}, we have that \begin{align}\nonumber &r_{(a+1)aa(a+1)a}-r_{(a+1)(a+1)aaa}\\\nonumber&=\left(r_{(a+1)(a+1)a(2a)}-r_{(a+1)aa(2a+1)}\right)-\left(r_{(2a+1)a(a+1)a}-r_{(2a+2)aaa}\right)\\\nonumber&-\left(r_{(2a+1)a(2a+1)}-r_{(2a+2)a(2a)}\right)\\\nonumber&=\left((r_{(2a)(a+1)(a+1)a}-r_{(2a+1)(a+1)aa})-(r_{(2a+1)a(a+1)a}-r_{(2a+2)aaa})\right)\\\nonumber&-\left((r_{(2a)(a+1)(a+1)a}-r_{(2a+1)(a+1)aa})-(r_{(2a)a(a+1)(a+1)}-r_{(2a+1)aa(a+1)})\right)\\\nonumber&-\left((r_{(2a+1)(a+1)(2a)}-r_{(2a+2)a(2a)})-(r_{(2a)(a+1)(2a+1)}-r_{(2a+1)a(2a+1)})\right).\end{align}

We will now use Theorems \ref{thm:moving1} and \ref{thm:moving1cor} to analyze the three differences \begin{align}\nonumber G_1&=\left((r_{(2a)(a+1)(a+1)a}-r_{(2a+1)(a+1)aa})-(r_{(2a+1)a(a+1)a}-r_{(2a+2)aaa})\right),\\\nonumber G_2&=\left((r_{(2a)(a+1)(a+1)a}-r_{(2a+1)(a+1)aa})-(r_{(2a)a(a+1)(a+1)}-r_{(2a+1)aa(a+1)})\right),\hbox{ and }\\\nonumber G_3&=\left((r_{(2a+1)(a+1)(2a)}-r_{(2a+2)a(2a)})-(r_{(2a)(a+1)(2a+1)}-r_{(2a+1)a(2a+1)})\right),\end{align}
 in order to show that $$r_{(a+1)aa(a+1)a}-r_{(a+1)(a+1)aaa}=G_1-G_2-G_3$$ is Schur-positive.\\

 By Theorem \ref{thm:moving1cor}, the sum \begin{align*}G_1&=\left((r_{(2a)(a+1)(a+1)a}-r_{(2a+1)(a+1)aa})-(r_{(2a+1)a(a+1)a}-r_{(2a+2)aaa})\right)\\&=\sum_\nu c'_{(2a)(a+1)(a+1)a,2,3,\nu}s_\nu\end{align*} is Schur-positive and is illustrated by the diagram below. If an asterisk is a $1$, then the bullet to its right must be at least the bullet below that asterisk.
 
 \begin{equation}\label{diag:adv2}\tableau{&&&&&&&&&&&1&1&1&1&1&1&1&1\\&&&&&&&\ast&\bullet&\hspace{0pt}&\hspace{0pt}&2\\&&&\ast&\bullet&\hspace{0pt}&\hspace{0pt}&\bullet\\\hspace{0pt}&\hspace{0pt}&\hspace{0pt}&\bullet}\end{equation}
 
 Now consider \begin{align}\nonumber G_2&=\left((r_{(2a)(a+1)(a+1)a}-r_{(2a+1)(a+1)aa})-(r_{(2a)a(a+1)(a+1)}-r_{(2a+1)aa(a+1)})\right)\\\nonumber&=\left((r_{(2a)(a+1)(a+1)a}-r_{(2a)a(a+1)(a+1)})-(r_{(2a+1)(a+1)aa}-r_{(2a+1)aa(a+1)})\right).\end{align} By Lemma \ref{lem:connearcon}, we have \begin{align}\nonumber r_{(2a)}r_{(a+1)(a+1)a}&=r_{(2a)(a+1)(a+1)a}+r_{(3a+1)(a+1)a}\hbox{ and }\\\nonumber r_{(2a)}r_{a(a+1)(a+1)}&=r_{(2a)a(a+1)(a+1)}+r_{(3a)(a+1)(a+1)}.\end{align} Because the left sides are equal by Theorem~\ref{thm:hdl}, we find that $$r_{(2a)(a+1)(a+1)a}-r_{(2a)a(a+1)(a+1)}=r_{(3a)(a+1)(a+1)}-r_{(3a+1)(a+1)a}.$$
 Similarly, by Lemma \ref{lem:connearcon} we have \begin{align}\nonumber r_{(2a+1)}r_{(a+1)aa}&=r_{(2a+1)(a+1)aa}+r_{(3a+2)aa}\hbox{ and }\\\nonumber r_{(2a+1)}r_{aa(a+1)}&=r_{(2a+1)aa(a+1)}+r_{(3a+1)a(a+1)}.\end{align} Because the left sides are equal by Theorem~\ref{thm:hdl}, we find that $$r_{(2a+1)(a+1)aa}-r_{(2a+1)aa(a+1)}=r_{(3a+1)a(a+1)}-r_{(3a+2)aa}.$$
 Therefore, we have that 
 $$G_2=\left((r_{(3a)(a+1)(a+1)}-r_{(3a+1)(a+1)a})-(r_{(3a+1)a(a+1)}-r_{(3a+2)aa})\right);$$ by Theorem \ref{thm:moving1cor}, $G_2$ is Schur-positive and is illustrated by the following diagram.
 
 $$\tableau{&&&&&&&&1&1&1&1&1&1&1&1&1&1&1&1\\&&&&\ast&\bullet&\hspace{0pt}&\hspace{0pt}&2\\\ast&\hspace{0pt}&\hspace{0pt}&\hspace{0pt}&\bullet}$$\\
 
 If the top asterisk is a 1, then both bullets must be $2$'s and the third row is all $2$'s. If the top asterisk is a 2, then the second row is all 2's and the third row is some number $0\leq x\leq a$ of $2$'s followed by $(a+1-x)$ $3$'s. Therefore we have that
 $$G_2=s_{({3a+1})({2a+1})}+\sum_{x=0}^as_{({3a})({a+1+x})({a+1-x})}.$$
 
 Now let us consider $$G_3=\left((r_{(2a+1)(a+1)(2a)}-r_{(2a+2)a(2a)})-(r_{(2a)(a+1)(2a+1)}-r_{(2a+1)a(2a+1)})\right).$$
 By Theorem \ref{thm:moving1}, the first difference is Schur-positive and is illustrated by the following diagram.
 
 $$\tableau{&&&&&&&&&&&1&1&1&1&1&1&1&1&1\\&&&&&&&\ast&\bullet&\hspace{0pt}&\hspace{0pt}&2\\\hspace{0pt}&\hspace{0pt}&\hspace{0pt}&\hspace{0pt}&\hspace{0pt}&\hspace{0pt}&\hspace{0pt}&\bullet}$$\\
 
 If $\ast=1$ then both bullets are $2$'s and third row is some number $0\leq x\leq a+1$ additional $2$'s preceded by $1$'s (or $0\leq x\leq a$ if $a=1$). If $\ast=2$ then the second row is all $2$'s and the third row is some number of $1$'s followed by some number $0\leq x\leq a$ of $2$'s, some number $0\leq y\leq a$ of $3$'s, and one more 3 for the bottom bullet. So we have
 \begin{equation}\label{eq:adv2,c,1}r_{(2a+1)(a+1)(2a)}-r_{(2a+2)a(2a)}=s_{(3a)(2a+2)}+\sum_{x=0}^as_{(4a+1-x)(a+1+x)}+\sum_{0\leq x,y \leq a \atop 0\leq x+y\leq 2a-1}s_{(4a-x-y)(a+1+x)(1+y)},\end{equation} where by convention if $a=1$, then $s_{(3a)(2a+2)}=s_{34}=0$.\\
 
 Similarly, by Theorem \ref{thm:moving1}, the second difference is Schur-positive and is illustrated by the following diagram.
 
 $$\tableau{&&&&&&&&&&&&1&1&1&1&1&1&1&1\\&&&&&&&&\ast&\bullet&\hspace{0pt}&\hspace{0pt}&2\\\hspace{0pt}&\hspace{0pt}&\hspace{0pt}&\hspace{0pt}&\hspace{0pt}&\hspace{0pt}&\hspace{0pt}&\hspace{0pt}&\bullet}$$\\
  
  If $\ast=1$ then both bullets are $2$'s and the third row is some number $0\leq x\leq a$ additional 2's preceded by $(2a-x)$ $1$'s. If $\ast=2$ then the second row is all $2$'s and the third row is some number of $1$'s followed by some number $0\leq x\leq a-1$ of $2$'s, some number $0\leq y\leq a$ of $3$'s, and one more 3 for the bottom bullet. So we have
  \begin{equation}\label{eq:adv2,c,2}r_{(2a)(a+1)(2a+1)}-r_{(2a+1)a(2a+1)}=\sum_{x=0}^as_{({4a+1-x})({a+1+x})}+\sum_{y=0}^a\sum_{x=0}^{a-1}s_{({4a-x-y})({a+1+x})({1+y})}.\end{equation}
  
  Subtracting Equation \eqref{eq:adv2,c,2} from Equation \eqref{eq:adv2,c,1}, we find that  $$G_3=s_{({3a})({2a+2})}+\sum_{y=0}^{a-1}s_{({3a-y})({2a+1})({1+y})}.$$
 
 Therefore, we must show that $$\sum_\nu c'_{(2a)(a+1)(a+1)a,2,3,\nu}s_\nu=G_1\geq_s G_2+G_3=\sum_{(x,y)\in B}s_{(4a+1-x-y)(a+1+x)y},$$ where $B$ is the multiset union $$B=\{(x,a+1-x):\hbox{ }0\leq x\leq a+1\}\cup\{(a,y):\hbox{ }0\leq y\leq a\}.$$ It remains to find restricted LR tableaux of shape $(2a)(a+1)(a+1)a$, as illustrated in Diagram \eqref{diag:adv2}, with content $(4a+1-x-y)(a+1+x)y$ for each $(x,y)\in B$. We must find two fillings where $(x,y)=(a,1)$, and we can ignore the pair $(x,y)=(a+1,0)$ if $a=1$.\\
 
 If $x=a+1$, $y=0$, and $a\geq 2$, then form the restricted LR tableau as follows.
 
 $$\tableau{&&&&&&&&&&&1&1&1&1&1&1&1&1\\&&&&&&&1&2&2&2&2\\&&&1&2&2&2&2\\1&1&2&2}$$\\
 
  If $x=a$ and $y=0$ then form the restricted LR tableau as follows.
 
 $$\tableau{&&&&&&&&&&&1&1&1&1&1&1&1&1\\&&&&&&&1&2&2&2&2\\&&&1&2&2&2&2\\1&1&1&2}$$\\
 
 If $x=a$ and $y=1$ then here are two restricted LR tableaux.
 
 $$\tableau{&&&&&&&&&&&1&1&1&1&1&1&1&1\\&&&&&&&2&2&2&2&2\\&&&1&2&2&2&3\\1&1&1&2}$$
 
 $$\tableau{&&&&&&&&&&&1&1&1&1&1&1&1&1\\&&&&&&&1&2&2&2&2\\&&&2&2&2&2&2\\1&1&1&3}$$\\
 
 If $x=a$ and $2\leq y \leq a$ then form the restricted LR tableaux as follows, with $y$ $3$'s in total preceded by $1$'s in the fourth row.
 
 $$\tableau{&&&&&&&&&&&1&1&1&1&1&1&1&1\\&&&&&&&1&2&2&2&2\\&&&2&2&2&2&2\\1&\hspace{0pt}&\hspace{0pt}&3}$$\\
 
 If $x=0$ and $y=a+1$ then form the restricted LR tableau as follows.
 
 $$\tableau{&&&&&&&&&&&1&1&1&1&1&1&1&1\\&&&&&&&2&2&2&2&2\\&&&1&3&3&3&3\\1&1&1&3}$$\\
 
 Finally, if $x+y=a+1$ with $1\leq x\leq a-1$ then form the restricted LR tableaux as follows, with $(x-1)$ $2$'s and $y$ $3$'s in total in the third row.
 
 $$\tableau{&&&&&&&&&&&1&1&1&1&1&1&1&1\\&&&&&&&2&2&2&2&2\\&&&1&\hspace{0pt}&\hspace{0pt}&\hspace{0pt}&3\\1&1&1&2}$$\\This completes the proof of this part, and therefore, the proof of this lemma.\end{enumerate}\end{proof}
\section{Large ribbons and short end rows}\label{sec:shortends}
In the previous section, Chain \eqref{chain:1a} from Theorem \ref{thm:chains} suggests that a large ribbon in $\mathcal{R}((a+1)^na^m)$ tends to have short end rows. This is also corroborated by Theorem \ref{thm:moving1}, which can produce a larger ribbon in $\mathcal{P}_N$ by shortening the first row by one cell. Additionally, Lemma \ref{lem:eqrib} tells us that the conjectured maximal element of $\mathcal{P}_N$ has a short last row. In this section, we prove the following necessary condition for an order relation in $\mathcal{R}((a+1)^na^m)$ along these lines.

\begin{theorem} \label{thm:shortends} Let $\alpha,\beta\in \mathcal{R}((a+1)^na^m)$. If $\ra\geq_s\rb$, then $SE(\alpha)\geq SE(\beta)$.  \end{theorem}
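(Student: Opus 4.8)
The plan is to establish the contrapositive: assuming $SE(\alpha)<SE(\beta)$, I will produce a partition $\nu$ with $[s_\nu]\ra<[s_\nu]\rb$, so that $\ra-\rb$ has a strictly negative Schur coefficient and hence $\ra\not\geq_s\rb$. Since $SE$ lies in $\{0,1,2\}$ and depends only on the two end-row lengths $\alpha_1,\alpha_R\in\{a,a+1\}$, it suffices to treat the two unit drops $2\to 1$ and $1\to 0$; in each, $\beta$ has a short end row where $\alpha$ has a long one. Because $r_\gamma=r_{\gamma^*}$ and $SE(\gamma)=SE(\gamma^*)$, I may always rotate the relevant end to the top, and then lean on the observation following Theorem~\ref{thm:LRrule} that every cell in the top row of an LR tableau is a $1$: a long top row forces $a+1$ of them, a short top row only $a$, and this discrepancy is what the witness $\nu$ must detect.

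The computational engine is the Littlewood--Richardson rule together with the identity $\ra\rb=r_{\alpha\cdot\beta}+r_{\alpha\odot\beta}$ of Lemma~\ref{lem:connearcon}. I would first settle the model case $R=3$, where a single long row moves between the two short ones. Peeling the bottom row by Lemma~\ref{lem:connearcon} and expanding the resulting two-row ribbons by the Pieri rule collapses the difference to a single Schur function; concretely $r_{a(a+1)a}-r_{(a+1)aa}=s_{(2a,\,a+1)}$, so $\nu=(2a,a+1)$ is the desired witness, present for the ribbon with the short top row and absent once that row is lengthened. The general goal is to pinpoint, for each poset $\mathcal{R}((a+1)^na^m)$, an analogous near-balanced partition $\nu$ whose Littlewood--Richardson coefficient strictly increases when an end row is shortened.

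The main obstacle is that two arbitrary ribbons in $\mathcal{R}((a+1)^na^m)$ with $SE(\alpha)<SE(\beta)$ are merely rearrangements of one another, not related by a single cell move, so the clean telescoping of the model case is unavailable and the interior placement of the long rows is unconstrained. To overcome this I would argue directly at the level of fillings, constructing an injection from the LR tableaux of $\alpha$ of content $\nu$ into those of $\beta$ of content $\nu$ (in the spirit of the maps $f_i$ built in the proof of Theorem~\ref{thm:moving1}) and then exhibiting one extra tableau of content $\nu$ for $\beta$ that lies outside the image, coming precisely from the freed top-row $1$ of the short end row. The delicate point is to verify that this surplus filling always exists and that the injection never loses a tableau, uniformly over every interior arrangement of the long rows; once that interior-independence is secured, the strict inequality $[s_\nu]\rb>[s_\nu]\ra$ follows and, with it, the theorem.
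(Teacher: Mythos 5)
Your overall strategy---arguing the contrapositive by exhibiting a witness partition $\nu$ with $[s_\nu]\rb>[s_\nu]\ra$, and obtaining that inequality from an injection on LR tableaux plus a surplus filling created by the extra top-row $1$ of a short end row---is exactly the paper's strategy, and your model-case witness $(2a,a+1)$ is precisely the paper's partition $\nu=(aR+n-(a+R-2))(a+R-2)$ specialized to $R=3$, $n=1$. The problem is that everything you defer (``the delicate point is to verify that this surplus filling always exists and that the injection never loses a tableau, uniformly over every interior arrangement of the long rows'') \emph{is} the entire content of the theorem: you never say what $\nu$ is for general $R$, nor how the injection is defined when $\alpha$ and $\beta$ are arbitrary rearrangements of one another. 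The paper closes this with two concrete devices you would need to reinvent. First, it takes $\nu$ to be the two-part partition above, which forces every LR filling to use only $1$'s and $2$'s; then every two-cell column is a $1$ over a $2$, the top row is all $1$'s, and the only freedom is the placement of the remaining $(a-1)$ $2$'s, so any SSYT of this content with all $1$'s on top is automatically lattice. Second, it splits fillings into \emph{comfortable} ones (every long row has a $1$, every long intermediate row has two $1$'s), which admit a row-by-row cell-shifting bijection between \emph{any} two shapes in $\mathcal{R}((a+1)^na^m)$, and \emph{uncomfortable} ones, which are in bijection with the long intermediate rows of the shape. This yields $[s_\nu]r_\delta=C+SE(\delta)$ with $C$ depending only on the poset, from which the theorem follows; it also retroactively justifies your reduction to unit drops, which is legitimate only because the witness and this monotone coefficient formula are uniform across the poset (incomparabilities themselves do not chain).

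There is a second genuine gap: your two-part witness cannot work when $a=1$, a case the theorem covers and the paper treats by a different argument. For $a=1$ the general form of your witness has $\nu_1=aR+n-(a+R-2)=n+1$ and $\nu_2=a+R-2=n+m-1$, so $\nu$ fails to be a partition as soon as $m>2$, and the whole two-row-content analysis collapses. The paper instead compares lexicographically largest contents: for any $\delta\in\mathcal{R}(2^n1^m)$ the lexicographically largest LR filling of $\delta$ has content beginning $(n+1)(n-1+SE(\delta))\cdots$, so $SE(\alpha)<SE(\beta)$ forces the lex-largest term of $\rb$ to be strictly larger than that of $\ra$, making it a positive term of $\rb-\ra$. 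Your proposal needs either this separate argument or an explicit restriction to $a\geq 2$.
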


\begin{example} \label{ex:shortendschain} Chain \eqref{chain:4} with $a=4$ gives the following. $$r_{4554}>_s r_{5454}>_s r_{5544}>_s r_{5445}$$ The numbers of short ends of the ribbons are respectively $2$, $1$, $1$, and $0$, weakly decreasing as the ribbon Schur functions decrease.\end{example}

\begin{remark} \label{rem:convexsubse} It follows immediately from Theorem \ref{thm:shortends} that the class of ribbons in $\mathcal{R}((a+1)^na^m)$ with zero (similarly one or two) short ends forms a convex subposet of $\mathcal{R}((a+1)^na^m)$. \end{remark}

We now prove Theorem \ref{thm:shortends}. Because the cases where $a=1$ and $a\geq 2$ are qualitatively different, we handle them separately.

\begin{lemma} \label{lem:shortendsa=1} Theorem \ref{thm:shortends} holds if $a=1$. \end{lemma}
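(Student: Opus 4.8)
The plan is to extract, from the hypothesis $r_\alpha \geq_s r_\beta$, a comparison of the \emph{column} lengths of the two ribbons, and then to read off $SE$ from these column lengths. Since a ribbon in $\mathcal{R}(2^n1^m)$ with $n=0$ is just $1^m$, that poset is a single point and there is nothing to prove, so I would assume $n\geq 1$. Write $\alpha = 1^{e_0}\,2\,1^{e_1}\,2\cdots 2\,1^{e_n}$ with $e_0,\ldots,e_n\geq 0$ and $e_0+\cdots+e_n=m$, so that the top row is short exactly when $e_0\geq 1$ and the bottom row is short exactly when $e_n\geq 1$; hence $SE(\alpha)=\mathbf 1[e_0\geq 1]+\mathbf 1[e_n\geq 1]$. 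Bookkeeping the overlaps (each $2$ shifts the ribbon one column to the left, producing $S=n+1$ columns) shows that the columns of $\alpha$ have lengths $e_0+1$ and $e_n+1$ at the two ends and $e_1+2,\ldots,e_{n-1}+2$ in between. In particular every interior column has length at least $2$, an end column has length $1$ precisely when the corresponding end row is long, and so the multiset $K(\alpha)$ of column lengths satisfies
$$m_1(K(\alpha)) = 2 - SE(\alpha),$$
where $m_1$ denotes the number of parts equal to $1$.

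Next I would control $K(\alpha)$ through the Schur support. For any skew shape, counting in each column the cells with entry at most $i$ shows that the content of every SSYT is dominated by the column-reading content $(\operatorname{sort}K)^t$, the conjugate of the sorted column-length partition; since the column-superstandard filling is itself a Littlewood--Richardson tableau, $(\operatorname{sort}K(\alpha))^t$ lies in the support and is its unique dominance-maximal element. This is the dominance refinement of the lexicographic statement recalled after Theorem~\ref{thm:LRrule}. Now $r_\alpha\geq_s r_\beta$ forces every Schur coefficient of $r_\beta$ to be at most that of $r_\alpha$, so the support of $r_\beta$ is contained in that of $r_\alpha$; applying this to the maximal element of the support of $r_\beta$ gives $(\operatorname{sort}K(\beta))^t \trianglelefteq (\operatorname{sort}K(\alpha))^t$, that is, $K(\alpha)\trianglelefteq K(\beta)$ in dominance.

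Finally I would translate this dominance comparison back into a statement about parts equal to $1$. Both $K(\alpha)$ and $K(\beta)$ have exactly $S=n+1$ parts, so their conjugates have equal first part $S$; from $K(\alpha)\trianglelefteq K(\beta)$ we get $K(\alpha)^t \trianglerighteq K(\beta)^t$, and comparing the partial sums of length $2$ yields $K(\alpha)^t_2\geq K(\beta)^t_2$. Since $K(\cdot)^t_2$ counts the parts of $K(\cdot)$ that are at least $2$, this says $m_1(K(\alpha))\leq m_1(K(\beta))$, whence $SE(\alpha)=2-m_1(K(\alpha))\geq 2-m_1(K(\beta))=SE(\beta)$, as desired.

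The step I expect to require the most care is the column-length bookkeeping together with the identity $m_1(K(\alpha))=2-SE(\alpha)$: one must verify that length-$1$ columns arise \emph{only} at long end rows, never in the interior, for all $n\geq 1$, including the degenerate case $n=1$ (no interior columns) in which the two end columns must still be kept distinct. The remaining ingredients --- that the support is contained in the dominance principal order ideal below the column-reading shape, and that dominance of equal-length partitions reverses under conjugation --- are standard, so once the dictionary between $SE$ and the number of length-$1$ columns is in place the argument closes quickly.
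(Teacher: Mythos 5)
Your proposal is correct and follows essentially the same route as the paper: both arguments rest on identifying the extremal element of the Schur support of a ribbon in $\mathcal{R}(2^n1^m)$ --- the conjugate of the column-length partition, whose second entry equals $(n+1)-(2-SE)$ because single-cell columns occur exactly at long end rows --- and then comparing these extremal partitions for $\alpha$ and $\beta$. The only cosmetic difference is that you phrase the comparison via dominance order and support containment, while the paper compares lexicographically largest contents and exhibits the positive leading term $s_\nu$ in $\rb-\ra$; since the first parts agree (both equal the number of columns $n+1$), the two comparisons amount to the same inequality on the second part.
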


\begin{proof} We may assume that $m,n\geq 1$, otherwise $\mathcal{R}(2^n1^m)$ only consists of one element so $\alpha=\beta$. Suppose that $SE(\alpha)<SE(\beta)$. We will identify a positive term of $\rb-\ra$, contradicting $\ra\geq_s\rb$.\\

Let $\delta\in \mathcal{R}(2^n1^m)$ and let $\sigma$ be the content of the lexicographically largest LR tableau filling of shape $\delta$. We then have \begin{align}\nonumber \sigma_1&=\hbox{number of $1$'s in largest filling}\\\nonumber&=\hbox{number of columns of $\delta$}\\\nonumber&=\hbox{number of cells of $\delta$}-\hbox{number of rows of $\delta$}+1\\\nonumber&=(2n+m)-(n+m)+1=n+1.\end{align} We also calculate \begin{align}\nonumber \sigma_2&=\hbox{number of $2$'s in largest filling}\\\nonumber&=\hbox{number of columns of $\delta$ of length at least $2$}\\\nonumber&=\hbox{number of columns of $\delta$}-\hbox{number of single-cell columns of $\delta$}.\end{align} Because by assumption $\delta$ has at least two rows and all rows are of length $1$ or $2$, we can verify that a single-cell column occurs exactly when $\delta$ has a $2$ as its first part or as its last part. Therefore $$\sigma_2=(n+1)-(2-SE(\delta))=n-1+SE(\delta).$$

Now let $\nu$ be the content of the lexicographically largest LR tableau filling of shape $\beta$ and let $\rho$ be the content of that of $\alpha$. By the above, we know that $\nu_1=n+1=\rho_1$, and because $SE(\alpha)<SE(\beta)$, that $\rho_2<\nu_2$. Therefore $\rho<_{lex}\nu$, and the lexicographically largest term of $\rb-\ra$ is $s_\nu$, a positive term. This contradicts $\ra\geq_s\rb$.\end{proof}

\begin{example} \label{ex:shortendsa=1} The lexicographically largest LR tableau fillings of shapes $21212$, $12212$, and $12221$, given below, have contents $422$, $431$, and $44$ respectively. The number of $1$'s in each case is $3+1=4$ and the number of $2$'s is $2$ plus the number of short ends.
$$\tableau{&&1&1\\&&2\\&1&3\\&2\\1&3}\tableau{&&&1\\&&1&2\\&1&2\\&2\\1&3}\tableau{&&&1\\&&1&2\\&1&2\\1&2\\2}$$ For example, the difference $$r_{12221}-r_{12212}=(s_{44}+\hbox{(smaller terms)})-(s_{431}+\hbox{(smaller terms)})=s_{44}\pm\hbox{(smaller terms)}$$ contains the positive term $s_{44}$, so $r_{12212}\ngeq_s r_{12221}$.\end{example}

Before proving the $a\geq 2$ case of Theorem \ref{thm:shortends}, we make a quick definition.

\begin{definition} \label{def:comfortable} An LR tableau $T$ of equitable ribbon shape is \emph{comfortable} if every long row has a $1$ and every long intermediate row has two $1$'s. Otherwise, $T$ is \emph{uncomfortable}. \end{definition}

\begin{example} \label{ex:comfortable} Below are two LR tableaux of shape $5544$. The left tableau is comfortable; note that the last row is short so it need not have a $1$. The right tableau is uncomfortable because the second row, which is long, has only one $1$.

$$\tableau{&&&&&&&&&&1&1&1&1&1\\&&&&&&1&1&2&2&2\\&&&1&2&2&2\\1&1&2&2\\&}\hspace{0pt}\tableau{&&&&&&&&&&1&1&1&1&1\\&&&&&&1&2&2&2&2\\&&&1&1&2&2\\1&1&2&2\\&}$$ \end{example}

\begin{lemma} \label{lem:shortendsageq2} Theorem \ref{thm:shortends} holds if $a\geq 2$. \end{lemma}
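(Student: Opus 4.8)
The plan is to prove the contrapositive: assuming $SE(\alpha)<SE(\beta)$, I will exhibit a single partition $\nu$ whose coefficient in $\rb$ strictly exceeds its coefficient in $\ra$, so that $\ra-\rb$ is not Schur-positive and $\ra\geq_s\rb$ fails. As in Lemma~\ref{lem:shortendsa=1} I read coefficients directly off the Littlewood--Richardson rule (Theorem~\ref{thm:LRrule}), but now I use a content lower down in dominance than the lexicographically largest one (whose coefficient is $1$ for every ribbon in the class and so cannot distinguish them). Since $a\geq 2$, every column of an equitable ribbon has length $1$ or $2$, so I restrict to contents $\nu=(\nu_1,\nu_2)$ using only the letters $1$ and $2$. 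For such a content the LR tableaux are rigid: every length-$2$ column reads $1$ over $2$, the top row is entirely $1$'s, and in each lower row the entries form a block of $1$'s followed by a block of $2$'s whose rightmost cell (the bottom of a length-$2$ column, shared with the row above) is a $2$.

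The first step is to convert this rigidity into a count. Writing $\gamma=\gamma_1\cdots\gamma_R$ for either ribbon, the top row contributes no $2$'s and no freedom; each interior row $i$ ($2\leq i\leq R-1$) has its leftmost cell forced to $1$ and its rightmost to $2$, so its number of $2$'s ranges over $\{1,\dots,\gamma_i-1\}$; and the bottom row, whose leftmost cell is free, contributes a number of $2$'s in $\{1,\dots,\gamma_R\}$. Hence, ignoring the lattice condition, the number of fillings of content $(\nu_1,\nu_2)$ is the number of integer solutions of $f_2+\cdots+f_R=\nu_2$ with $1\leq f_i\leq \gamma_i-1$ for interior rows and $1\leq f_R\leq\gamma_R$; equivalently it is the coefficient of $x^{\nu_2}$ in $\prod_i g_{u_i}(x)$, where $g_u(x)=x+x^2+\cdots+x^u$ and the bounds $u_i$ over the $R-1$ non-top rows are $a-1$ or $a$ for short or long interior rows and $a$ or $a+1$ for a short or long bottom row. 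This is exactly where the comfortable/uncomfortable dichotomy of Definition~\ref{def:comfortable} organizes the fillings: a long interior row genuinely offers the choice between more and fewer $1$'s, whereas a long top or bottom row is pinned down.

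The second step is the monotonicity that drives everything. Since the multiset of parts is fixed, making an end row short forces a long interior row somewhere else, and I will show this can only enlarge the generating function coefficientwise. The engine is the elementary identity $g_a(x)^2=g_{a-1}(x)g_{a+1}(x)+x^{a+1}$, together with $g_a=g_{a-1}+x^a$: turning a long top into a short top raises one interior bound from $a-1$ to $a$, so the product gains $x^a$ times the remaining factors, while turning a long bottom into a short bottom trades a pair of bounds $\{a-1,a+1\}$ for $\{a,a\}$ and the product gains $x^{a+1}$ times the remaining factors. Running through the finitely many cases $(SE(\alpha),SE(\beta))\in\{(0,1),(0,2),(1,2)\}$, each difference $\prod_i g_{u_i(\beta)}-\prod_i g_{u_i(\alpha)}$ factors as $x^{a+1}$ (or $x^a$) times a product of $g_u$'s, hence has nonnegative coefficients and is not identically zero. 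Choosing $\nu_2$ in the support of this difference then makes the coefficient of $s_{(\nu_1,\nu_2)}$ strictly larger in $\rb$ than in $\ra$, as required.

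The main obstacle is the lattice (Yamanouchi) condition, which I suppressed above: the number of genuine LR tableaux equals the bounded-composition count only at those $\nu_2$ for which every candidate filling is automatically a lattice word, and in principle the condition could prune $\alpha$ and $\beta$ unequally. The crux of the argument is therefore to choose $\nu_2$ simultaneously in the (small-degree) support of the difference polynomial and small enough that the abundance of $1$'s forced into the top and upper rows guarantees the lattice condition for all candidate fillings of both shapes; the comfortable tableaux are precisely those for which this is transparent. Verifying that such a $\nu_2$ always exists---treating the three $SE$-cases uniformly, checking that $\nu=(\nu_1,\nu_2)$ is a genuine partition, and disposing of the small-parameter boundary cases such as $n\leq 1$---is the technical heart of the proof.
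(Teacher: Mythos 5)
Your set-up and your algebra are correct: the reduction to two-letter contents, the rigidity of such fillings of a ribbon with $a\geq 2$, the count as a coefficient of $\prod_i g_{u_i}(x)$, and the identities $g_a=g_{a-1}+x^a$ and $g_a^2=g_{a-1}g_{a+1}+x^{a+1}$ all check out, and this generating-function bookkeeping is a genuinely different (and arguably cleaner) replacement for the paper's ``comfortable/uncomfortable'' dichotomy and bijection. However, the step you defer as ``the technical heart'' is a genuine gap, and it is not a routine verification: it is essentially the entire content of the lemma, and it succeeds only because of an exact numerical coincidence that you never check. Two things must actually be proved. First, that for $\nu_2\leq a+R-2$ every candidate filling (entries in $\{1,2\}$, all $1$'s in the top row) of either shape is automatically a lattice word; this is the paper's argument, which matches each of the $R-1$ forced $2$'s at the right ends of rows $2,\dots,R$ with the $1$ directly above it, and each of the remaining $\nu_2-(R-1)\leq a-1$ extra $2$'s with one of the $\geq a-1$ spare $1$'s in the top row. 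Second, that the support of your difference polynomial meets the window $[R-1,\,a+R-2]$ at all: from your own factorizations, the lowest degree of the difference is, e.g., $a+n+(m-2)=a+R-2$ in the $(1,2)$ case, and it equals $a+R-2$ in every case, with lowest coefficient exactly $SE(\beta)-SE(\alpha)$. So the support touches the lattice-automatic window only at its right endpoint; there is no slack, ``small enough $\nu_2$'' does not exist in any looser sense, and the unique admissible choice is $\nu_2=a+R-2$, which is precisely the paper's content $\nu=(aR+n-(a+R-2))(a+R-2)$.

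That this coincidence cannot be waved through is shown by what happens one step above the threshold: for the shape $233$ with $a=2$ (so $a+R-2=3$) and $\nu_2=4$, your bounded-composition count is $2$ but the LR count is $1$, because the filling with rows $11$, $122$, $122$ violates the lattice condition; above the threshold there is no reason for such discrepancies to cancel between $\alpha$ and $\beta$, so the GF difference says nothing about LR coefficients there. Two smaller loose ends: in the $(0,2)$ case the difference is $g_a^{n-2}g_{a-1}^{m-2}\bigl(x^a g_{a-1}g_{a+1}+x^{a+1}g_a\bigr)$, a sum of two such products rather than a single one (harmless, still nonnegative with nonzero lowest term); and for $SE=1$ your naive count depends on the orientation (short end at top versus bottom), not only on $SE(\beta)$, so you must either fix an orientation or check both --- their GFs differ by $x^{a+1}g_a^{n-1}g_{a-1}^{m-1}$, which fortunately has degree $\geq a+R-1$, so they agree throughout the admissible window, but this needs to be said. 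Once these verifications are supplied your proof closes, and it becomes an attractive hybrid: the paper's choice of $\nu$ and its lattice-word injection, with your generating functions replacing its comfortable/uncomfortable enumeration (whose bijection $g$ and uncomfortable-tableau count are exactly what your coefficient extraction at degree $a+R-2$ encodes).
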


\begin{proof} We may assume that $m,n\geq 1$, otherwise $\mathcal{R}((a+1)^na^m)$ only consists of one element so $\alpha=\beta$. Suppose that $SE(\alpha)<SE(\beta)$. We will identify a positive term of $\rb-\ra$, contradicting $\ra\geq_s\rb$.\\

Let $\nu$ be the two-part partition $(aR+n-(a+R-2))(a+R-2)$; we will count LR tableaux fillings of shapes $\alpha$ and $\beta$ of content $\nu$. First let us describe such fillings.\\

The lattice word condition tells us that the first row must be all $1$'s and that the $(a+R-2)$ $2$'s must each be preceded by their own $1$. Because we only have $1$'s and $2$'s, each of the $(R-1)$ two-cell columns must be filled with a $1$ above a $2$. Then there are $(a-1)$ remaining $2$'s left to place. Now because the first row has $a$ or $(a+1)$ $1$'s, only one of which was in a two-cell column, there are enough $1$'s in the first row to satisfy the lattice word condition. In other words, any SSYT of shape $\alpha$ or $\beta$ with content $\nu$ and with all $1$'s in the top row is automatically an LR tableau.\\

 It is useful to count separately the comfortable and uncomfortable LR tableaux. Let $A_c$ and $A_u$ be respectively the sets of comfortable and uncomfortable LR tableaux of shape $\alpha$ and content $\nu$, and let $B_c$ and $B_u$ be respectively the sets of comfortable and uncomfortable LR tableaux of shape $\beta$ and content $\nu$.\\ 

First we handle the comfortable tableaux by defining a bijection $$g:A_c\rightarrow B_c.$$ Given an LR tableau $T\in A_c$, set $g(T)$ to be the tableau of shape $\beta$ where the $k$-th row is filled as follows:\begin{itemize}
\item If $\alpha_k=\beta_k$, fill the $k$-th row of $g(T)$ in the same way as the $k$-th row of $T$.
\item If $\alpha_k=a$ and $\beta_k=a+1$, fill the $k$-th row of $g(T)$ with a $1$ followed by the entries of the $k$-th row of $T$.
\item If $\alpha_k=a+1$ and $\beta_k=a$, fill the $k$-th row of $g(T)$ with the second through $(a+1)$-th entries of the $k$-th row of $T$.
\end{itemize}
Because $T$ is comfortable, all long rows of $T$ have a $1$, so the content is preserved. Also, because all long intermediate rows of $T$ have two $1$'s, the resulting tableau $g(T)$ is still an SSYT. By construction, $g(T)$ has all $1$'s in the top row, is of shape $\beta$, and is comfortable, so indeed $g(T)\in B_c$. Finally, $g$ is a bijection because the inverse can be constructed in exactly the same way. Therefore, $|A_c|=|B_c|$.\\

We now handle the uncomfortable tableaux. Let us characterize $T\in A_u$. We know that the first row of $T$ is all $1$'s, the $(R-1)$ two-cell columns of $T$ have a $1$ followed by a $2$, and that as long as $T$ is an SSYT, the lattice word condition will be satisfied, no matter how the remaining $(a-1)$ $2$'s are distributed among the remaining rows. The bottom row of $T$, if it happens to be long, will have $a$ remaining cells, so it must contain a $1$. Therefore, for $T$ to be uncomfortable, there must be a long intermediate row $i$ that fails to have two $1$'s. Since row $i$ has $(a+1)$ cells, the first of which is a $1$ and the last of which is a $2$, the $(a-1)$ remaining cells must be filled with the $(a-1)$ remaining $2$'s, thus determining the entire LR tableau. Hence $|A_u|$ is the number of long intermediate rows of $\alpha$; similarly $|B_u|$ is the number of long intermediate rows of $\beta$. Therefore $|B_u|-|A_u|=SE(\beta)-SE(\alpha)$.\\

Now the difference $\rb-\ra$ contains the positive term $$((|B_c|+|B_u|)-(|A_c|+|A_u|))s_\nu=(|B_u|-|A_u|)s_\nu=(SE(\beta)-SE(\alpha))s_\nu,$$ contradicting $\ra\geq_s\rb$. \end{proof}

\begin{example} \label{ex:uncomfortable} Below are the two uncomfortable tableaux of shape $5554$ and content $13\hbox{ }6$. For each long intermediate row of the ribbon, there is a unique such tableau where that row is filled with a single $1$ followed by $2$'s.

\begin{align*}&\tableau{&&&&&&&&&&&1&1&1&1&1\\&&&&&&&1&2&2&2&2\\&&&1&1&1&1&2\\1&1&1&2\\&}\\&\tableau{&&&&&&&&&&&1&1&1&1&1\\&&&&&&&1&1&1&1&2\\&&&1&2&2&2&2\\1&1&1&2\\&}\end{align*} \end{example}

\begin{remark} \label{rem:shortendsterms} The partitions used in the proofs of Lemmas \ref{lem:shortendsa=1} and \ref{lem:shortendsageq2} are the lexicographically largest partitions appearing in the difference $\rb-\ra$. There are examples where these are the only terms occurring, the smallest of which are $$r_{121}-r_{112}=s_{22}\hbox{ and }r_{232}-r_{223}=s_{43},$$ so in that sense this result is the best possible. \end{remark}

It is tempting to assert the converse statement that for $\alpha,\beta\in \mathcal{R}((a+1)^na^m)$, if $SE(\alpha)>SE(\beta)$, then $\ra\geq_s\rb$ as well; however, we will see counterexamples in Corollary \ref{cor:incompar} of Section \ref{sec:coarsenings}. We can however prove the following partial converse, which resembles \cite[Proposition 3.5]{maxsupport}.

\begin{proposition} \label{prop:shortendsconv} Let $\beta\in \mathcal{R}((a+1)^na^m)$ with $m\geq 2$. Then there is an $\alpha\in \mathcal{R}((a+1)^na^m)$ such that $SE(\alpha)=2$ and $$\ra\geq_s\rb.$$ \end{proposition}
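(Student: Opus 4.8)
The plan is to build $\alpha$ from $\beta$ by two rounds of adjacent-row swaps, each of which is an increase in the Schur-positivity order by Theorem~\ref{thm:celltr} with $a'=a+1$, so that $r_\alpha\geq_s r_\beta$ follows by transitivity. Since a swap only permutes the multiset of row lengths, every intermediate ribbon remains in $\mathcal{R}((a+1)^na^m)$. I would first dispose of trivial cases: if $\beta_1=a$ already, skip the first round, and if the last row of $\beta$ is already $a$, skip the second round. The goal of the first round is to make the top row short, and the goal of the second is to make the bottom row short without disturbing the top.

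For the first round, let $k$ be the position of the \emph{topmost} short row, so that $\beta=(a+1)^{k-1}a\tau$ with $\tau=\beta_{k+1}\cdots\beta_R$. Because $m\geq 2$, the remaining $m-1\geq 1$ short rows all lie in $\tau$, so $\tau$ contains a short row. I would bubble the row at position $k$ up to the top via the swaps at positions $(k-1,k),(k-2,k-1),\ldots,(1,2)$. At the swap carrying the short row from position $j+1$ to $j$, the relevant comparison is $r_{\delta\,a\,(a+1)\,\gamma}\geq_s r_{\delta\,(a+1)\,a\,\gamma}$, where $\delta=(a+1)^{j-1}$ is an all-long prefix and $\gamma$ is the suffix below the swapped pair, which still contains $\tau$ and hence a short row. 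Since $\delta^*$ consists entirely of long rows its partial sums are maximal among compositions with parts in $\{a,a+1\}$, so $\delta^*$ dominates $\gamma$ up to any length; thus Case~$1$ of Theorem~\ref{thm:celltr} applies when $\ell(\gamma)\geq\ell(\delta)$ and Case~$2$ applies otherwise, the strict inequality at length $\ell(\gamma)$ being precisely the statement that $\gamma$ contains a short row. Every swap is an increase, so the round ends at $\rho=a(a+1)^{k-1}\tau\geq_s\beta$ with a short first row.

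For the second round I would work inside $\rho$ and push the \emph{bottommost} short row, located at some position $k'\geq 2$, down to position $R$ by the swaps at $(k',k'+1),\ldots,(R-1,R)$; every row below $k'$ is long, and the short first row of $\rho$ lies strictly above position $k'$, so it is never touched. Each such swap is the inequality $r_{\delta\,(a+1)\,a\,\gamma}\geq_s r_{\delta\,a\,(a+1)\,\gamma}$ with $\gamma$ an all-long suffix and $\delta$ a prefix containing the short first row. Applying the reversal identity $r_\sigma=r_{\sigma^*}$ from Theorem~\ref{thm:hdl} rewrites this as an instance handled by Theorem~\ref{thm:celltr} in which the all-long $\gamma$ now plays the dominating role and the short first row inside $\delta$ supplies the strictness needed in Case~$2$. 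These swaps fix the first row, so the resulting $\alpha$ has short rows at both ends, whence $SE(\alpha)=2$ and $\alpha\geq_s\rho\geq_s\beta$.

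The step I expect to be the main obstacle is verifying the dominance hypotheses of Theorem~\ref{thm:celltr} uniformly across all swaps. The two observations that make this routine are that bubbling the \emph{topmost} (respectively \emph{bottommost}) short row keeps the relevant prefix (respectively suffix) entirely long, so weak dominance is automatic, and that $m\geq 2$ always leaves a short row in the opposite part, which is exactly what Case~$2$ needs for strict dominance. I would also check the boundary swaps where $\delta$ or $\gamma$ is empty, which fall under the vacuous instance of Case~$1$, and confirm that the argument is insensitive to whether $a=1$ or $a\geq 2$, as it uses only that every part lies in $\{a,a+1\}$.
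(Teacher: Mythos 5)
Your proposal is correct and is essentially the paper's own argument: both iterate adjacent-row swaps justified by Theorem~\ref{thm:celltr} with $a'=a+1$, using the all-long prefix for weak dominance and the existence (from $m\geq 2$) of a short row in the other factor for the strict dominance needed in Case~2, then handle the second end via reversal invariance $r_\sigma=r_{\sigma^*}$. The only difference is cosmetic: the paper reverses the ribbon once globally and repeats the same iteration, whereas you keep the orientation and invoke the reversal inside each swap's justification.
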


\begin{remark} \label{rem:shortendsconv=1} If $m=1$, then $\mathcal{R}((a+1)^na)$ is Chain \eqref{chain:1a} and by Theorem \ref{thm:chains} the full converse to Theorem \ref{thm:shortends} does in fact hold. This is why we assume that $m\geq 2$ in Proposition \ref{prop:shortendsconv}. \end{remark}

\begin{proof} If $SE(\beta)=2$ we simply take $\alpha=\beta$. Otherwise, reversing if necessary, we can write $\beta$ in the form $$\beta=\delta(a+1)a\gamma,$$ where $\delta$ is a possibly empty string of all $(a+1)$'s and because $m\geq 2$, $\gamma$ has a part of size $a$.\\

 If $\ell(\gamma)\geq\ell(\delta)$, then because $\delta^*$ is all $(a+1)$'s it dominates $\gamma$ up to $\ell(\delta)$. If $\ell(\gamma)<\ell(\delta)$, then because $\gamma$ has an $a$, then $\delta^*$ dominates $\gamma$ up to part $(\ell(\gamma)-1)$ and $\delta^*$ strictly dominates $\gamma$ at $\ell(\gamma)$. In either case, by Theorem \ref{thm:celltr}, we have that $$r_{\delta a(a+1)\gamma}\geq_sr_{\delta(a+1)a\gamma}.$$

This allows us to iteratively find successively larger ribbons until we obtain a ribbon with short first row. If necessary, we reverse and continue iterating to obtain a still larger ribbon $\alpha$ with two short ends.\end{proof}

\begin{example} \label{ex:se}Given $\beta=555454455$, our procedure  finds the following chain, culminating in $\alpha=455455554$. \begin{align*}\rb&=r_{555454455}<_s r_{554554455}<_s r_{545554455}<_s r_{455554455}\\&=r_{554455554}<_s r_{545455554}<_s r_{455455554}=\ra.\end{align*}\end{example}

\begin{corollary} \label{cor:shortends} The ribbon $\alpha=a(a+1)^na$ is the unique maximal element of $\mathcal{R}((a+1)^na^2)$. \end{corollary}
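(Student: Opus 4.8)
The plan is to leverage Proposition~\ref{prop:shortendsconv} together with the purely combinatorial observation that taking $m=2$ forces a \emph{unique} ribbon with two short ends.

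First I would invoke Proposition~\ref{prop:shortendsconv} with $m=2$: for any $\beta\in\mathcal{R}((a+1)^na^2)$ it produces some $\alpha\in\mathcal{R}((a+1)^na^2)$ with $SE(\alpha)=2$ and $\ra\geq_s\rb$. The crucial point is that when $m=2$ there are exactly two short rows in total, so the requirement $SE(\alpha)=2$ places both short rows at the end rows and forces every one of the $n$ intermediate rows to be long. Reading the row lengths from top to bottom, this pins $\alpha$ down to be precisely $a(a+1)^na$. Hence $r_{a(a+1)^na}\geq_s\rb$ for every $\beta$ in the poset, so $a(a+1)^na$ is the greatest element of $\mathcal{R}((a+1)^na^2)$; in particular this confirms Conjecture~\ref{conj:maxel} and equation~\eqref{eq:shortendscor} of Corollary~\ref{cor:maxforposets} in this case.

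Next I would upgrade ``greatest'' to ``unique maximal'' using the equivalence-class structure. Since $a(a+1)^na$ equals its own reversal, Corollary~\ref{cor:ineq} tells us its equivalence class is just itself, so it is a genuine single element of $\mathcal{R}((a+1)^na^2)$. For any $\beta\neq a(a+1)^na$ in the poset, the difference $r_{a(a+1)^na}-\rb$ is Schur-positive; were it zero we would have $r_{a(a+1)^na}=\rb$, forcing $\beta$ into the equivalence class $\{a(a+1)^na\}$ by Corollary~\ref{cor:ineq}, a contradiction. Thus $r_{a(a+1)^na}>_s\rb$ strictly, so no other element can be maximal and $a(a+1)^na$ is the unique maximal element.

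The main obstacle here is \emph{not} in this corollary itself: the substantive work lies upstream in Proposition~\ref{prop:shortendsconv} (built from the cell-exchange inequality Theorem~\ref{thm:celltr}) and in Theorem~\ref{thm:shortends}. Granting those, the only things to get right are the clean counting argument that $m=2$ together with $SE=2$ uniquely determines the shape, and the standard passage from ``greatest element'' to ``unique maximal element'' via the fact that distinct elements of the poset have distinct ribbon Schur functions.
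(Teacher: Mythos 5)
Your proposal is correct and takes essentially the same approach as the paper: both rest on Proposition~\ref{prop:shortendsconv} together with the observation that $a(a+1)^na$ is the \emph{unique} element of $\mathcal{R}((a+1)^na^2)$ with $SE=2$, so the element produced by that proposition must be $a(a+1)^na$ itself. Your final strictness argument via Corollary~\ref{cor:ineq} is harmless but not needed: the elements of $\mathcal{R}((a+1)^na^2)$ are equivalence classes ordered by $\leq_s$, which is antisymmetric, so a greatest element is automatically the unique maximal element by pure poset theory.
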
 

\begin{proof} This ribbon $\alpha$ is the unique element of $\mathcal{R}((a+1)^na^2)$ with $SE(\alpha)=2$. Therefore, by Proposition \ref{prop:shortendsconv}, we have $\ra\geq_s\rb$ for every $\beta\in \mathcal{R}((a+1)^na^2)$.\end{proof}

\begin{remark} \label{rem:semaxforposets} According to \eqref{eq:shortendscor} of Corollary \ref{cor:maxforposets}, Corollary \ref{cor:shortends} confirms that for $\mathcal{R}((a+1)^na^2)$ the box diagonal diagram is indeed the unique maximal element, confirming Conjecture~\ref{conj:maxel} in this case. \end{remark} 

\begin{remark} \label{rem:sedirections} It is tempting to try to extend Theorem \ref{thm:shortends} to the rows next to the end rows and continuing to work inward. For example, we may think that if $\ra\geq_s\rb$ and that $SE(\alpha)=SE(\beta)$, it would be necessary that the number of short rows of $\alpha$ next to the end rows must exceed that of $\beta$. However, the inequality $$r_{a(a+1)a(a+1)a}\geq_sr_{a(a+1)(a+1)aa}$$ from Chain \eqref{chain:5} shows that this is not the case. In Section \ref{sec:coarsenings} we explain what is going on here. \end{remark}

\section{Large ribbons and even distribution of long rows}\label{sec:coarsenings}

In this section we present our second necessary condition for an order relation in $\mathcal{R}((a+1)^na^m)$. We begin with an example to motivate our result. \\

According to \eqref{eq:smallscor} of Corollary \ref{cor:maxforposets}, the conjectured maximal element of Conjecture~\ref{conj:maxel} for $\mathcal{R}(5^44^{15})$ is $$\beta=444\hbox{ }5\hbox{ }444\hbox{ }5\hbox{ }444\hbox{ }5\hbox{ }444\hbox{ }5\hbox{ }444,$$ suggesting that the long rows, namely the $5$'s, are evenly distributed, far from each other and far from the ends of the ribbon. Our goal for this section is to show that indeed for $\alpha,\beta\in \mathcal{R}((a+1)^na^m)$ to satisfy $\ra\geq_s\rb$, the long rows of $\alpha$ must be more evenly distributed than those of $\beta$. We now make this notion precise.

\begin{definition} \label{def:profiles} Let us write a composition $\alpha\in \mathcal{R}((a+1)^na^m)$ as $$\alpha=a^{p_1}(a+1)a^{p_2}(a+1)\cdots(a+1)a^{p_{n+1}},$$ where $p_i\geq 0$ for $1\leq i\leq n+1$. Then we define the \emph{profile} of $\alpha$ to be the tuple $$p(\alpha)=p_1p_2\cdots p_{n+1}$$ and we define the \emph{quasi-profile} of $\alpha$ to be the tuple $q(\alpha)=q_0q_1\cdots$, where $$q_j=|\{i:\hbox{ }p_i=j\}|;$$ that is, $q_j$ is the number of $j$'s in $p(\alpha)$.\end{definition}

\begin{remark} \label{rem:profiles0} The way it is defined, the quasi-profile has an inifinite tail of $0$'s. We will omit this for brevity. Let us also note that $\sum_{i=1}^{n+1}p_i=m$.\end{remark}

\begin{example} \label{ex:profiles} Let $$\alpha=44444444\hbox{ }5\hbox{ }44444\hbox{ }5\hbox{ }\hbox{ }\hbox{ }5\hbox{ }44\hbox{ }5.$$ Then $$p(\alpha)=85020\hbox{ and }q(\alpha)=201001001.$$ \end{example}

\begin{remark} \label{rem:profiles} Given a fixed value of $a$, one can recover a composition $\alpha$ from its profile $p(\alpha)$. However, one can not in general recover $\alpha$ from its quasi-profile $q(\alpha)$ because compositions whose profiles are permuted will have the same quasi-profile.  \end{remark}

We are now able to state our result regarding the distribution of long rows. The proof of this result will require much preparation, namely Corollary \ref{cor:observation}, Theorem \ref{thm:jtcoarse}, and Theorem \ref{thm:ftom}.

\begin{theorem} \label{thm:smalls} Let $\alpha,\beta\in \mathcal{R}((a+1)^na^m)$. If $\ra\geq_s\rb$, then $q(\alpha)\leq_{lex}q(\beta)$. \end{theorem}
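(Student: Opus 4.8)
The plan is to prove Theorem~\ref{thm:smalls} via its contrapositive: assuming $q(\alpha)>_{lex}q(\beta)$, I will exhibit a Schur function $s_\nu$ whose coefficient in $\rb-\ra$ is strictly positive, thereby contradicting $\ra\geq_s\rb$. The guiding heuristic is that of Lemma~\ref{lem:shortendsageq2}: rather than computing the entire Schur expansion, I would pick a single well-chosen partition $\nu$ and compare the LR-coefficients $c^\cdot_\cdot{}_\nu$ of $\alpha$ and $\beta$ directly by counting LR tableaux of that fixed content.

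**Choosing the target partition.** The quasi-profile $q(\alpha)=q_0q_1\cdots$ counts how many maximal runs of short rows have each given length; $q_0$ counts the long rows that are adjacent (runs of length $0$). The lexicographic condition $q(\alpha)>_{lex}q(\beta)$ means that reading from $q_0$ upward, $\alpha$ has strictly more blocks of some \emph{smallest} size $j^*$ than $\beta$ does, with agreement on all smaller sizes. Since $\nu$ must detect the clustering of long rows, I expect to use a partition $\nu$ with a small number of parts---likely three parts, the content being tuned so that the LR tableaux counted are exactly those forced to place two $2$'s (or more generally their non-$1$ entries) in a controlled way that is sensitive to short runs between long rows. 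The key is that an LR tableau of a very restricted content is essentially rigid away from a few ``free'' cells, so the count becomes a combinatorial statistic on the profile $p(\alpha)$ rather than on $\alpha$ itself, explaining why the quasi-profile, not the profile, is the relevant invariant.

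**Carrying out the count.** I would first establish, as in Lemma~\ref{lem:shortendsageq2}, that for the chosen $\nu$ every SSYT with all $1$'s in the top row is automatically an LR tableau, reducing the problem to counting SSYT fillings. Then I would show that the number of such fillings depends on $\alpha$ only through its quasi-profile: a filling is determined by how the surplus non-$1$ entries are distributed among the short runs separating consecutive long rows, and a run of $p_i$ short rows contributes a factor depending only on $p_i$. This should reduce $[\rb-\ra]s_\nu$ to a difference of products indexed by the multisets $\{p_i\}$ of the two profiles---that is, to a symmetric-function-type inequality in the run lengths. The preparatory results cited before the theorem (Corollary~\ref{cor:observation}, Theorem~\ref{thm:jtcoarse}, Theorem~\ref{thm:ftom}) presumably furnish exactly this reduction, expressing the relevant coefficient as a sum over coarsenings/compositions governed by the quasi-profile.

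**The main obstacle.** The hard part will be the final monotonicity step: showing that when $q(\alpha)>_{lex}q(\beta)$, the resulting count for $\beta$ strictly exceeds that for $\alpha$ for the chosen $\nu$. Because the lexicographic comparison happens at the \emph{smallest} differing block size, I would take $\nu$ calibrated to that threshold size $j^*$ so that blocks of size $<j^*$ contribute identically and cancel, blocks of size $j^*$ contribute a term counted with multiplicity $q_{j^*}$, and larger blocks contribute only to strictly smaller (lexicographically later) partitions that do not interfere. Making this cancellation precise---verifying that no larger run produces a spurious contribution at the chosen $\nu$ and that the sign of the leading difference is governed exactly by $q_{j^*}(\beta)-q_{j^*}(\alpha)>0$---is where the delicacy lies, and I anticipate it is precisely what Theorems~\ref{thm:jtcoarse} and~\ref{thm:ftom} are designed to handle by giving a closed combinatorial formula for the coefficient in terms of the quasi-profile.
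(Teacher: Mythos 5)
Your high-level strategy (contrapositive plus a witness partition whose coefficient has the wrong sign) matches the paper, but the concrete mechanism you commit to is the wrong one, and the essential work is deferred rather than done. You propose to imitate Lemma~\ref{lem:shortendsageq2}: choose a content $\nu$ with few parts (``likely three''), argue that SSYT with all $1$'s in the top row are automatically LR, and count rigid fillings. That technique lives at the \emph{lexicographically largest} end of the Schur expansion, which is exactly what detects $SE(\alpha)$ in Theorem~\ref{thm:shortends}; the quasi-profile is invisible there. The paper's witness is at the opposite end: it is the lexicographically \emph{least} partition $\lambda_{k+1}=(2a)^{k+1}(a+1)^na^{m-2(k+1)}$, where $k$ is the first index at which $q(\alpha)$ and $q(\beta)$ differ. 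This partition has $n+m-k-1$ parts, not three, and its depth $k+1$ depends on where the quasi-profiles first disagree, so no single shallow content can serve in general (Remark~\ref{rem:se} records precisely this largest-versus-least dichotomy). Moreover, the paper never counts LR tableaux for this theorem: it works in the complete homogeneous basis, where Theorem~\ref{thm:jtcoarse} turns coefficients into multiplicities of coarsenings (choices of disjoint pairs of adjacent short rows), and Corollary~\ref{cor:observation} (unitriangularity of the $h$-to-$s$ transition) converts the sign of the lex-least $h$-coefficient into a failure of Schur-positivity. Your plan to extract the coefficient by direct tableau counting at a fixed shallow $\nu$ has no comparable mechanism behind it.

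The second gap is that the step you yourself flag as ``the main obstacle'' --- showing that runs of size less than $j^*$ cancel, runs of size $j^*$ contribute with multiplicity governed by $q_{j^*}$, and larger runs do not interfere --- is the entire content of Theorem~\ref{thm:ftom}, which you invoke on faith (``presumably'', ``I anticipate''). That theorem is not a routine reduction: it is a delicate induction on $k$ establishing $m_\alpha(\lambda_k)=C_k-(-1)^kq_{k-1}(\alpha)$ with $C_k$ depending only on $m$, $n$, and $q_j(\alpha)$ for $j\leq k-2$, and its proof requires both a correction-term analysis for short runs and Jensen's binomial identity (Theorem~\ref{thm:jensen}) to show the leading sum is profile-independent. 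Once that is granted, the paper's proof of Theorem~\ref{thm:smalls} is three lines: the multiplicities $m_\alpha(\lambda_{k'})$ and $m_\beta(\lambda_{k'})$ agree for $k'\leq k$ and differ by $\mp z$ at $k+1$, so the lexicographically least term of $\ra-\rb$ is $-zh_{\lambda_{k+1}}$, and Corollary~\ref{cor:observation} rules out Schur-positivity. So your proposal names the right supporting results but does not supply the argument, and where it is specific it points toward a method that cannot see the statistic the theorem is about.
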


We will first look at some examples and make some remarks.

\begin{example} \label{ex:smalls} Let $$\alpha=44444444\hbox{ }5\hbox{ }44444\hbox{ }5\hbox{ }\hbox{ }\hbox{ }5\hbox{ }44\hbox{ }5\hbox{ and }\beta=444\hbox{ }5\hbox{ }444\hbox{ }5\hbox{ }444\hbox{ }5\hbox{ }444\hbox{ }5\hbox{ }444.$$ Then $$p(\alpha)=85020,\hbox{ }q(\alpha)=201001001,\hbox{ }p(\beta)=33333,\hbox{ and }q(\beta)=0005.$$ Because $q(\alpha)>_{lex}q(\beta)$, we know by Theorem \ref{thm:smalls} that $\ra\ngeq_s\rb$.\end{example}

\begin{remark} \label{rem:smalls} We observe that the quasi-profile of an $\alpha\in \mathcal{R}((a+1)^na^m)$ represents how evenly distributed its long rows are. Now Theorem \ref{thm:smalls} allows us to understand the inequality $$r_{a(a+1)a(a+1)a}\geq_sr_{a(a+1)(a+1)aa}$$ presented in Remark \ref{rem:sedirections} of the previous section. After maximizing the number of short ends, the next priority in looking for a large ribbon in $\mathcal{R}((a+1)^na^m)$ is not to place more short rows toward the ends of the ribbon, but rather to evenly distribute the long rows. \end{remark}

\begin{example} \label{ex:smallschain} Chain \eqref{chain:4} with $a=4$ gives the following. $$r_{4554}>_sr_{5454}>_sr_{5544}>_sr_{5445}$$
 The quasi-profiles of the ribbons are respectively $12$, $12$, $201$, and $201$, weakly increasing lexicographically as the ribbon Schur functions decrease.\end{example}

\begin{remark} \label{rem:smallschain} From Examples \ref{ex:shortendschain} and \ref{ex:smallschain}, we see that if one is initially only told that $\mathcal{R}(5^24^2)$ is a chain, then Theorems \ref{thm:shortends} and \ref{thm:smalls} are sufficient to put the ribbons in order. \end{remark}

\begin{remark} \label{rem:convexsubq} Similar to Remark \ref{rem:convexsubse} from Section \ref{sec:shortends}, it immediately follows from Theorem \ref{thm:smalls} that the parameter $q$ with lexicographic order partitions $\mathcal{R}((a+1)^na^m)$ into convex subposets according to its possible values. \end{remark} 

We will now prepare to prove Theorem \ref{thm:smalls}. We first use the Jacobi-Trudi  identity to obtain leading terms in the Schur function expansion of a ribbon Schur function. We then recast this identity in a combinatorial light and perform an extensive enumeration.\\

Recall that the Jacobi-Trudi identity states that for partitions $\lambda$ and $\mu$, we have \begin{equation} \label{eq:jt} s_{\lambda/\mu}=\det(h_{\lambda_i-\mu_j-i+j}),\end{equation} where $h_N=\sum _{i_1\leq \cdots \leq i_N} x_{i_1}\cdots x_{i_N}$ is the $N$-th complete homogeneous symmetric function. By convention, $h_{\lambda _1\cdots \lambda _{\ell(\lambda)}} = h_{\lambda _1} \cdots h_{\lambda _{\ell(\lambda)}}$, $h_0=1$ and $h_N=0$ if $N<0$.\\

\begin{lemma} \label{lem:observation} 
\hspace{2pt}
\begin{enumerate}
\item The lexicographically least term of the complete homogeneous symmetric function expansion of $s_\lambda$ is $h_\lambda$. In other words, we can write $$s_\lambda=h_\lambda+\sum_{\mu>_{lex}\lambda}c_\mu h_\mu$$ for some coefficients $c_\mu$. \\
\item The lexicographically least term of the Schur function expansion of $h_\lambda$ is $s_\lambda$. In other words, we can write $$h_\lambda=s_\lambda+\sum_{\mu>_{lex}\lambda}c'_\mu s_\mu$$ for some coefficients $c'_\mu$. \end{enumerate}\end{lemma}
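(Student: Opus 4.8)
The plan is to prove the two parts in sequence, deriving Part~(1) directly from the Jacobi--Trudi identity~\eqref{eq:jt} and then obtaining Part~(2) for free by inverting a triangular change of basis. First I would record the general principle that makes both parts ``triangularity'' statements: if the matrix expressing each $s_\lambda$ in the $h$-basis is unitriangular for $<_{lex}$, then so is its inverse, which is exactly the content of Part~(2) relative to Part~(1).

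For Part~(1) I would start from $s_\lambda=\det(h_{\lambda_i-i+j})_{1\le i,j\le\ell}$, where $\ell=\ell(\lambda)$ (this is \eqref{eq:jt} with $\mu=\emptyset$), and expand the determinant over the symmetric group to write $s_\lambda=\sum_{\sigma\in S_\ell}\mathrm{sgn}(\sigma)\,h_{\gamma(\sigma)}$, where $\gamma(\sigma)$ is the composition with parts $\gamma(\sigma)_i=\lambda_i-i+\sigma(i)$ and $h_{\gamma(\sigma)}=\prod_i h_{\gamma(\sigma)_i}$ vanishes if any part is negative and drops zero parts since $h_0=1$. The identity permutation contributes exactly $h_\lambda$, and the crux is to show that every other surviving term, once its parts are sorted into the partition $\mu(\sigma)=\lambda(\gamma(\sigma))$, satisfies $\mu(\sigma)>_{lex}\lambda$; in particular $h_\lambda$ is produced by no $\sigma\neq\mathrm{id}$, so its coefficient is $\mathrm{sgn}(\mathrm{id})=1$.

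The key inequality I would establish is that $\mu(\sigma)$ dominates $\lambda$ whenever the term survives. The sum of the $p$ largest parts of $\{\gamma(\sigma)_1,\dots,\gamma(\sigma)_\ell\}$ is at least $\gamma(\sigma)_1+\cdots+\gamma(\sigma)_p=(\lambda_1+\cdots+\lambda_p)+\big(\sum_{i\le p}\sigma(i)-\sum_{i\le p}i\big)$, and since $\sigma(1),\dots,\sigma(p)$ are $p$ distinct positive integers their sum is at least $1+\cdots+p$; hence $\mu(\sigma)_1+\cdots+\mu(\sigma)_p\ge\lambda_1+\cdots+\lambda_p$ for every $p$, i.e.\ $\mu(\sigma)\geq_{\mathrm{dom}}\lambda$. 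For $\sigma\neq\mathrm{id}$ I would look at the least index $k$ with $\sigma(k)\neq k$: then $\sigma$ fixes $1,\dots,k-1$, which forces $\sigma(k)>k$ and makes the estimate strict at $p=k$, so $\mu(\sigma)\neq\lambda$ and $\mu(\sigma)>_{\mathrm{dom}}\lambda$. I would finish by invoking the elementary fact that dominance refines lexicographic order, so $\mu(\sigma)>_{lex}\lambda$, giving Part~(1).

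For Part~(2) I would argue purely formally: ordering the partitions of $N=|\lambda|$ by $<_{lex}$, Part~(1) says the transition matrix from the basis $\{s_\lambda\}_{\lambda\vdash N}$ to the basis $\{h_\mu\}_{\mu\vdash N}$ is unitriangular (diagonal entries $1$, off-diagonal support on $\mu>_{lex}\lambda$); the inverse of a unitriangular matrix is again unitriangular, so expanding $h_\lambda$ in the Schur basis yields $h_\lambda=s_\lambda+\sum_{\mu>_{lex}\lambda}c'_\mu s_\mu$. The main obstacle is entirely inside Part~(1), namely the dominance estimate together with the claim that no cancellation or duplication disturbs the coefficient of $h_\lambda$; this rests on the strictness of the domination at the first non-fixed point of $\sigma$, after which everything, including all of Part~(2), is formal.
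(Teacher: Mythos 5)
Your proof is correct, but Part (1) takes a genuinely different route from the paper's. The paper proves Part (1) by induction on $\ell(\lambda)$: it expands the Jacobi--Trudi determinant along its \emph{first row} only, notes that each term with $j'\geq 2$ carries a factor $h_{\lambda_1-1+j'}$ with $\lambda_1-1+j'>\lambda_1$ and is therefore indexed by a partition lexicographically larger than $\lambda$, and applies the induction hypothesis to the complementary minor, which is the Jacobi--Trudi matrix of $\lambda_2\cdots\lambda_{\ell(\lambda)}$. You instead expand the full determinant over $S_\ell$ and show that every surviving non-identity term, once sorted, strictly dominates $\lambda$, then pass from dominance to lexicographic order. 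Both arguments are sound: your strictness claim at the first non-fixed point $k$ (where $\sigma(k)>k$ forces $\sum_{i\le k}\sigma(i)>\sum_{i\le k}i$) is exactly what rules out any contribution to the coefficient of $h_\lambda$ from $\sigma\neq\mathrm{id}$, and negative and zero parts are handled correctly. Your route buys a stronger conclusion in a single non-inductive step --- all other terms dominate $\lambda$, not merely exceed it lexicographically --- at the cost of invoking the relation between dominance and lex order plus a sorting argument; the paper's route avoids dominance entirely and is shorter, but needs induction and implicitly uses the fact that merging the part $\lambda_1$ into lex-comparable partitions preserves the comparison. One terminological nit: the fact you want is that lexicographic order is a \emph{linear extension} of dominance order (strict dominance implies $>_{lex}$); saying ``dominance refines lexicographic order'' inverts the usual convention, though the implication you actually use is the correct one. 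Your Part (2) --- inverting a unitriangular transition matrix --- is identical to the paper's.
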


\begin{proof}  \begin{enumerate} \item We use induction on the number of parts of $\lambda$. If $\lambda$ has zero parts, then $s_\emptyset=1=h_\emptyset$. Otherwise, expanding the determinant of Equation \eqref{eq:jt} along the first row, we have that for $\lambda = \lambda _1\lambda_2\cdots \lambda _{\ell(\lambda)}$ $$s_\lambda=h_{\lambda_1}\det(h_{\lambda_{i+1}-i+j})+\sum_{j'=2}^{\ell(\lambda)} h_{\lambda_1-1+j'}\det(A_{j'}),$$
where $A_{j'}$ is the minor obtained by removing the first row and $j'$-th column from the original matrix. The partitions appearing in the sum have a part $\lambda_1-1+j'>\lambda_1$ so are lexicographically larger than $\lambda$, while by the induction hypothesis the lexicographically least term of $\det(h_{\lambda_{i+1}-i+j})$ is $h_{\lambda_2\cdots\lambda_{\ell(\lambda)}}$, so the lexicographically least term of $s_\lambda$ is $h_{\lambda_1}h_{\lambda_2\cdots\lambda_{\ell(\lambda)}}=h_\lambda$, as desired.\\
\item By the first part, the transition matrix from the Schur function basis to the complete homogeneous symmetric function basis, when the indices are ordered using reverse lexicographic order, is lower triangular with $1$'s along the main diagonal. Therefore, so is the inverse matrix. \end{enumerate}\end{proof}

\begin{corollary} \label{cor:observation} Let $f$ be a nonzero symmetric function. Expand $f$ in the complete homogeneous symmetric function basis and separate the lexicographically least term to write $$f=c_\lambda h_\lambda+\sum_{\mu>_{lex}\lambda} c_\mu h_\mu,$$ where $c_\lambda\neq 0$. Now if $f$ is Schur-positive, then $c_\lambda>0$. \end{corollary}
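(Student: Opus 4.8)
The plan is to transfer the problem from the complete homogeneous basis into the Schur basis, where Schur-positivity is manifest as nonnegativity of coefficients, and then use the unitriangularity of the change of basis supplied by Lemma~\ref{lem:observation} to show that the lexicographically least term survives the transfer together with its coefficient. Since $f$ is Schur-positive and nonzero, I would first write $f=\sum_\nu d_\nu s_\nu$ with every $d_\nu\geq 0$ and at least one $d_\nu>0$. Let $\lambda_0$ be the lexicographically least partition with $d_{\lambda_0}>0$. The goal is to prove that $\lambda_0$ is exactly the index $\lambda$ of the lex-least term of the $h$-expansion of $f$ and that $c_\lambda=d_{\lambda_0}$, which is then positive.

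The main step is to expand each Schur function in the $h$-basis using part (1) of Lemma~\ref{lem:observation}, namely $s_\nu=h_\nu+\sum_{\mu>_{lex}\nu}c^\nu_\mu h_\mu$, and substitute into $f=\sum_\nu d_\nu s_\nu$. The crucial observation is that $s_\nu$ contributes only to those $h_\mu$ with $\mu\geq_{lex}\nu$. Consequently, for any $\mu\leq_{lex}\lambda_0$, the only Schur functions whose $h$-expansion can reach $h_\mu$ are those indexed by $\nu\leq_{lex}\mu\leq_{lex}\lambda_0$; but among partitions $\nu$ with $d_\nu>0$ the lex-least is $\lambda_0$ by our choice, so the sole contribution comes from $\nu=\lambda_0$, and only in the case $\mu=\lambda_0$.

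Putting this together, the coefficient of $h_{\lambda_0}$ in $f$ equals $d_{\lambda_0}\cdot 1=d_{\lambda_0}>0$, while the coefficient of $h_\mu$ vanishes for every $\mu<_{lex}\lambda_0$. By the uniqueness of the expansion of $f$ in the $h$-basis, this forces $\lambda=\lambda_0$ and $c_\lambda=d_{\lambda_0}>0$, as required. I do not anticipate a genuine obstacle here: the whole argument is triangularity bookkeeping, and the only points demanding care are fixing the direction of the lexicographic order consistently, so that ``lex-least'' in the $s$-basis matches ``lex-least'' in the $h$-basis, and invoking the strict inequality $d_{\lambda_0}>0$, which is exactly where the hypotheses that $f$ is both nonzero and Schur-positive enter.
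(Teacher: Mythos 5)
Your proof is correct, and the underlying idea---unitriangularity of the change of basis between $\{h_\lambda\}$ and $\{s_\lambda\}$ with respect to lexicographic order, as recorded in Lemma~\ref{lem:observation}---is the same as the paper's, but you run it in the opposite direction. The paper applies Part 2 of Lemma~\ref{lem:observation}: expanding each term of the given $h$-expansion of $f$ into Schur functions, every $h_\mu$ with $\mu>_{lex}\lambda$ contributes only Schur functions $s_\nu$ with $\nu\geq_{lex}\mu>_{lex}\lambda$, so the coefficient of $s_\lambda$ in the Schur expansion of $f$ is exactly $c_\lambda$; Schur-positivity gives $c_\lambda\geq 0$, and $c_\lambda\neq 0$ finishes the proof in one line. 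You instead start from the unknown Schur expansion $f=\sum_\nu d_\nu s_\nu$ and push it through Part 1 into the $h$-basis, which requires the extra (but correctly executed) step of invoking uniqueness of the $h$-expansion to identify the index $\lambda$ of the statement with your $\lambda_0$. Both arguments are sound triangularity bookkeeping; the paper's direction avoids the identification step, while yours has the small virtue of exhibiting $c_\lambda$ explicitly as the lexicographically least positive Schur coefficient $d_{\lambda_0}$.
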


\begin{proof} By Part 2 of Lemma \ref{lem:observation}, a term of the Schur function expansion of $f$ is now $c_\lambda s_\lambda$.\end{proof}

We make use of the following convenient reformulation of the Jacobi-Trudi identity for ribbon Schur functions, after a definition.

\begin{definition} \label{def:mulcoarse} \cite[Equation (2.4)]{btvw} Let $\alpha$ be a composition. Recall that a composition $\beta$ is a \emph{coarsening} of $\alpha$ if it can be obtained by adding together adjacent parts of $\alpha$. We define $\mathcal{M}(\alpha)$ to be the multiset of partitions determined by all coarsenings of $\alpha$ and we call it the \emph{multiset of coarsenings of $\alpha$}. For a partition $\lambda$ we denote by $m_\alpha(\lambda)$ its multiplicity in $\mathcal{M}(\alpha)$. \end{definition}

\begin{example} \label{ex:mulcoarse} When $\alpha=1212$ we have that $$\mathcal{M}(\alpha)=\{2211, 321, 321, 321, 33, 42, 51, 6\}\hbox{ and }m_{1212}(321)=3.$$ \end{example}

\begin{theorem} \label{thm:jtcoarse} \cite[Equation (2.6)]{btvw} Let $\alpha$ be a composition. Then $$r_\alpha=(-1)^{\ell(\alpha)}\sum_{\lambda\in\mathcal{M}(\alpha)}(-1)^{\ell(\lambda)}h_\lambda.$$ \end{theorem}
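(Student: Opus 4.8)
The plan is to prove the identity by induction on the length $R=\ell(\alpha)$, using the multiplicative recurrence supplied by Lemma~\ref{lem:connearcon} together with the elementary fact that a single-row ribbon satisfies $r_{(b)}=s_{(b)}=h_b$. Specialising Lemma~\ref{lem:connearcon} to $\beta=(\alpha_R)$ and rearranging gives the recurrence
$$r_{\alpha}=r_{\alpha'}\,h_{\alpha_R}-r_{\alpha''},$$
where $\alpha'=\alpha_1\cdots\alpha_{R-1}$ and $\alpha''=\alpha_1\cdots\alpha_{R-2}(\alpha_{R-1}+\alpha_R)$ are both of length $R-1$. The strategy is to show that the right-hand side of the claimed formula satisfies this same recurrence and the same base case $r_{(\alpha_1)}=h_{\alpha_1}$, whence the two agree for every $\alpha$.

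Accordingly, I would set $F(\alpha)=(-1)^{\ell(\alpha)}\sum_{\lambda\in\mathcal{M}(\alpha)}(-1)^{\ell(\lambda)}h_\lambda$ and prove $F(\alpha)=F(\alpha')h_{\alpha_R}-F(\alpha'')$. The crux is a bijective decomposition of the multiset $\mathcal{M}(\alpha)$. Each coarsening of $\alpha$ is determined by a choice, at each of the $R-1$ gaps between consecutive parts, of whether to cut or to merge; I would split the coarsenings according to the behaviour at the final gap, the one between $\alpha_{R-1}$ and $\alpha_R$. Those that cut there are exactly the coarsenings of $\alpha'$ with the extra part $\alpha_R$ appended, and those that merge there are exactly the coarsenings of $\alpha''$. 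This partitions $\mathcal{M}(\alpha)$, as a multiset of partitions, into two pieces corresponding with multiplicity to $\mathcal{M}(\alpha')$ and $\mathcal{M}(\alpha'')$.

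The remaining work is purely bookkeeping of lengths and signs along this bijection. In the first piece, appending $\alpha_R$ multiplies $h_\lambda$ by $h_{\alpha_R}$ and raises $\ell(\lambda)$ by one; matching this against the prefactor $(-1)^{\ell(\alpha)}$ versus $(-1)^{\ell(\alpha')}$ produces exactly the term $F(\alpha')h_{\alpha_R}$. In the second piece, merging the last two parts leaves the associated partition, and hence both $h_\lambda$ and $\ell(\lambda)$, unchanged, and the single extra unit of length in $\alpha$ relative to $\alpha''$ contributes the sign that turns this piece into $-F(\alpha'')$. Combining the two pieces yields the recurrence for $F$, and the base case is immediate, completing the induction.

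I expect the only real obstacle to be the careful verification that the gap-based partition of $\mathcal{M}(\alpha)$ respects multiplicities --- that is, that the bijection operates at the level of coarsenings (compositions indexed by subsets of gaps) before one passes to partitions, so that partitions such as the thrice-occurring $321$ of Example~\ref{ex:mulcoarse} are tracked correctly --- and that the two sign computations are performed consistently with the convention $(-1)^{\ell(\alpha)}(-1)^{\ell(\lambda)}$ rather than, say, by counting merges. Everything else reduces to the recurrence from Lemma~\ref{lem:connearcon} and routine manipulation of the commuting products $h_\lambda=\prod_i h_{\lambda_i}$.
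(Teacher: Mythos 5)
Your proposal is correct, but there is nothing in the paper to compare it against: Theorem~\ref{thm:jtcoarse} is quoted from \cite[Equation (2.6)]{btvw} without proof, so your argument is a genuine self-contained derivation rather than a reconstruction of an argument in the text. The induction is sound. Writing $\alpha'=\alpha_1\cdots\alpha_{R-1}$ and $\alpha''=\alpha_1\cdots\alpha_{R-2}(\alpha_{R-1}+\alpha_R)$, Lemma~\ref{lem:connearcon} with $\beta=(\alpha_R)$ together with $r_{(\alpha_R)}=s_{(\alpha_R)}=h_{\alpha_R}$ gives $r_\alpha=r_{\alpha'}h_{\alpha_R}-r_{\alpha''}$; and since a coarsening of $\alpha$ is indexed by the subset of merged gaps, splitting on whether the last gap is merged identifies the coarsenings cutting there with coarsenings of $\alpha'$ (append $\alpha_R$: length up by one, $h_\lambda$ multiplied by $h_{\alpha_R}$) and those merging there with coarsenings of $\alpha''$ (same partition, same length), multiplicities included, because both identifications are bijections on gap-subsets. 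The sign bookkeeping then yields exactly $F(\alpha)=F(\alpha')h_{\alpha_R}-F(\alpha'')$ as you claim, and the induction closes with the base case $F((b))=h_b=r_{(b)}$. Your worry about multiplicities is resolved by your own remark: since $\ell(\lambda(\beta))=\ell(\beta)$ and $h_{\lambda(\beta)}=h_{\beta_1}\cdots h_{\beta_{\ell(\beta)}}$, the sum over $\mathcal{M}(\alpha)$ is literally a sum over coarsenings $\beta$ of $\alpha$, which is the level at which your bijections operate. For comparison, the derivation in \cite{btvw} runs slightly differently: iterating Lemma~\ref{lem:connearcon} gives $h_{\lambda(\alpha)}=r_{\alpha_1}\cdots r_{\alpha_{\ell(\alpha)}}=\sum_{\beta}r_\beta$, summed over all coarsenings $\beta$ of $\alpha$, and the stated formula then follows by inclusion--exclusion (M\"{o}bius inversion on the Boolean lattice of gaps). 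Your induction is an equivalent repackaging of that inversion, trading the appeal to a general inversion principle for explicit sign bookkeeping; both proofs rest on the same two inputs, namely the product rule of Lemma~\ref{lem:connearcon} and the fact that a one-row ribbon Schur function is a complete homogeneous symmetric function.
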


Let us summarize where we are. By Corollary \ref{cor:observation}, we have a necessary condition for a  difference $\ra-\rb$ to be Schur-positive, based on the lexicographically least term appearing in its complete homogeneous symmetric function expansion. By Theorem \ref{thm:jtcoarse}, the complete homogeneous symmetric function expansion of  $\ra$ can be determined by finding the multiplicities of partitions in $\mathcal{M}(\alpha)$. Therefore, our next task is to count the multiplicities of the lexicographically least partitions in $\mathcal{M}(\alpha)$ and $\mathcal{M}(\beta)$. For $\alpha\in \mathcal{R}((a+1)^na^m)$, the lexicographically least partitions that can appear in $\mathcal{M}(\alpha)$ are of the form $$\lambda_k=(2a)^k(a+1)^na^{m-2k}.$$ We now find that the multiplicities of these partitions are intimately connected to the quasi-profile $q(\alpha)$ of $\alpha$. This will be exactly what we need to prove Theorem \ref{thm:smalls}.

\begin{theorem} \label{thm:ftom} Let $\alpha\in \mathcal{R}((a+1)^na^m)$ and $k\geq 0$. Let $\lambda_k$ be the partition $(2a)^k(a+1)^na^{m-2k}$. There is an integer $C_k$ depending only on $m$, $n$, and $q_j(\alpha)$ for $0\leq j\leq k-2$ such that $$m_\alpha(\lambda_k)=C_k-(-1)^kq_{k-1}(\alpha).$$ \end{theorem}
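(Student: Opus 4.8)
The plan is to first identify exactly which coarsenings of $\alpha$ equal $\lambda_k=(2a)^k(a+1)^na^{m-2k}$. Writing $\alpha=a^{p_1}(a+1)a^{p_2}(a+1)\cdots(a+1)a^{p_{n+1}}$ as in its profile, any block of a coarsening that contains an $(a+1)$ has sum at least $a+1$, and a block summing to a part of $\lambda_k$ (namely $a$, $a+1$, or $2a$) is either a single unmerged $(a+1)$ or is built only from short rows; moreover a sum equal to $2a$ can only be $a+a$. Hence every long row must survive unmerged, and the coarsenings producing $\lambda_k$ are exactly those obtained by merging $k$ disjoint pairs of adjacent short rows, each pair lying inside one of the runs of lengths $p_1,\dots,p_{n+1}$. (When $a=1$ the value $2a=a+1$, but the conclusion is unchanged: the long rows, being $2$'s, still cannot be merged, so one again merges exactly $k$ adjacent pairs of $1$'s within the runs delimited by the $2$'s.) Since the number of ways to place $j$ non-overlapping dominoes on a path of $p$ cells is $\binom{p-j}{j}$, setting $F_p(z)=\sum_{j\ge 0}\binom{p-j}{j}z^j$ (so that $F_0=F_1=1$ and $F_p=F_{p-1}+zF_{p-2}$) yields
$$m_\alpha(\lambda_k)=[z^k]\prod_{i=1}^{n+1}F_{p_i}(z)=[z^k]\prod_{j\ge 0}F_j(z)^{q_j(\alpha)}.$$

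\textbf{Step 2 (Binet splitting).} The key step is to pass to logarithms using a closed form for $F_p$. Let $t_\pm=\tfrac12(1\pm\sqrt{1+4z})$ be the two roots of $t^2-t-z$. Then $F_p(z)=\dfrac{t_+^{\,p+1}-t_-^{\,p+1}}{t_+-t_-}$, and hence, as formal power series in $z$,
$$\log F_p(z)=(p+1)\log t_+-\log(t_+-t_-)+\log\!\left(1-(t_-/t_+)^{\,p+1}\right).$$
Since $t_+=1+z-\cdots$ and $t_-=-z+\cdots$, one has $t_-/t_+=-z+O(z^2)$, so the last summand is $O(z^{p+1})$; and, crucially, the first two summands are \emph{affine-linear in} $p$ with coefficients independent of $p$.

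\textbf{Step 3 (summation and coefficient extraction).} Summing over the $n+1$ runs and using $\sum_i(p_i+1)=m+n+1$,
$$\sum_{i=1}^{n+1}\log F_{p_i}(z)=(m+n+1)\log t_+-(n+1)\log(t_+-t_-)+\sum_{i=1}^{n+1}\log\!\left(1-(t_-/t_+)^{\,p_i+1}\right).$$
The first two terms depend only on $m$ and $n$. Each remaining summand is $O(z^{p_i+1})$, so modulo $z^{k+1}$ only runs with $p_i\le k-1$ survive, and grouping them by length turns this into $\sum_{j=0}^{k-1}q_j(\alpha)\log\!\left(1-(t_-/t_+)^{\,j+1}\right)$. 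Exponentiating and taking $[z^k]$ therefore shows that $m_\alpha(\lambda_k)$ depends only on $m$, $n$, and $q_0(\alpha),\dots,q_{k-1}(\alpha)$. To pin down the coefficient of $q_{k-1}$, note that its log-term $\log\!\left(1-(t_-/t_+)^{k}\right)=-(-1)^kz^k+O(z^{k+1})$ begins exactly in degree $k$; consequently $q_{k-1}$ can occur in $[z^k]\exp(\cdots)$ only linearly and with coefficient $[z^k]\log\!\left(1-(t_-/t_+)^{k}\right)=-(-1)^k$, while all contributions involving only $m,n,q_0,\dots,q_{k-2}$ assemble into the constant $C_k$. This gives $m_\alpha(\lambda_k)=C_k-(-1)^kq_{k-1}(\alpha)$.

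\textbf{Main obstacle.} The delicate point is Step 2: it is not transparent a priori why the ``higher power-sum'' data of the profile (the quantities $\sum_i p_i^{\,s}$ for $s\ge 2$, which are \emph{not} determined by $m$ and $n$) must cancel in $[z^k]\prod_i F_{p_i}$. The Binet identity $F_p=\frac{t_+^{p+1}-t_-^{p+1}}{t_+-t_-}$ is exactly what exhibits $\log F_p$ as (affine in $p$) plus $O(z^{p+1})$, simultaneously collapsing the $m,n$-dependent part and confining the profile-sensitive part to runs of length $<k$. The remaining technical checks are the boundary behaviour of $\binom{p-j}{j}$ for small $p$ and the verification that $t_-/t_+=-z+O(z^2)$, which is what fixes the sign $(-1)^k$.
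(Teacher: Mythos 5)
Your proposal is correct, but it takes a genuinely different route from the paper's. Both arguments start from the same combinatorial reduction: the coarsenings producing $\lambda_k$ are exactly the choices of $k$ disjoint pairs of adjacent short rows inside the runs (including the check that this persists when $a=1$), counted by $\binom{p_i-\eta_i}{\eta_i}$, which is the paper's Lemma \ref{lem:choosek} and the identity $m_\alpha(\lambda_k)=\sum_{\eta_1+\cdots+\eta_{n+1}=k}\prod_i\binom{p_i-\eta_i}{\eta_i}$. From there the paper runs a delicate induction on $k$: it replaces each $\binom{p_i-\eta_i}{\eta_i}$ by the polynomial version $\binom{p_i-\eta_i}{\eta_i}_g$, tracks the resulting corrections $D_j(\alpha,k)$ by applying the induction hypothesis to ribbons with runs deleted, isolates the exceptional term $D_{k-1}(\alpha,k)=(-1)^kq_{k-1}(\alpha)$, and finally invokes Jensen's identity (Theorem \ref{thm:jensen}) to show that the generic sum $f(p)=\sum_\eta\prod_i\binom{p_i-\eta_i}{\eta_i}_g$ is unchanged when a unit is moved between parts of $p$, hence depends only on $m$ and $n$. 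You replace all of this bookkeeping with the Binet/logarithm device: since $\log F_p$ is affine-linear in $p$ plus an error of valuation at least $p+1$, summing over runs makes the bulk depend only on $\sum_i(p_i+1)=m+n+1$ and $n+1$, confines the profile-sensitive part to runs of length at most $k-1$ (which automatically organizes itself by quasi-profile), and the $q_{k-1}$-coefficient falls out of the leading term of $\log\bigl(1-(t_-/t_+)^k\bigr)=-(-1)^kz^k+O(z^{k+1})$. I verified the sign and checked your formula against the paper's examples: it gives $C_1=m-n-1$ (Example \ref{ex:adjacent1}) and $C_2=3$ for $\mathcal{R}(5^34^6)$ (Example \ref{ex:R5346}). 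Your approach buys brevity and an explicit closed form, $C_k=[z^k]\exp\bigl((m{+}n{+}1)\log t_+-\tfrac{n+1}{2}\log(1+4z)+\sum_{j\leq k-2}q_j(\alpha)\log(1-(t_-/t_+)^{j+1})\bigr)$, while avoiding both the induction and Jensen's identity; the paper's approach stays within elementary binomial manipulations, and its induction is what yields the extra structural information recorded in Remark \ref{rem:ck2}, namely that $C_k$ is an integer-valued polynomial in $m$, $n$, and the $q_{j'}(\alpha)$ with a weighted degree bound. Two small points to patch: the case $k=0$ must be handled separately, since your term $\log\bigl(1-(t_-/t_+)^0\bigr)$ is undefined (one just notes $m_\alpha(\lambda_0)=1$ with the convention $q_{-1}=0$, exactly as the paper does), and the integrality of $C_k$, which is not visible from the coefficient formula, follows a posteriori from $C_k=m_\alpha(\lambda_k)+(-1)^kq_{k-1}(\alpha)$.
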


Before we embark on the proof of Theorem \ref{thm:ftom}, we will make some remarks, work through examples, and develop some tools.

\begin{remark} \label{rem:ck} We emphasize that the integer $C_k$ does not depend on the profile $p(\alpha)$. In other words, to count the multiplicity $m_\alpha(\lambda_k)$ one does not require the profile $p(\alpha)$, which is equivalent to knowing $\alpha$, but only the quasi-profile $q(\alpha)$. In addition, one specifically requires the values $q_j(\alpha)$ for $0\leq j\leq k-1$, and the dependence on $q_{k-1}(\alpha)$ is known. \end{remark}

\begin{example} \label{ex:adjacent1} When $k=1$, the partition $\lambda_1=(2a)(a+1)^na^{m-2}$ arises from joining a pair of adjacent $a$'s in $\alpha$. In each string of $p_i\geq 1$ consecutive $a$'s separated by $(a+1)$'s, we have $(p_i-1)$ such pairs. However, when $p_i=0$, we have $0$ rather than $-1$ pairs, so for each such $i$ we must add one to compensate. The multiplicity is therefore $$m_\alpha(\lambda_1)=\sum_{i:\hbox{ }p_i\geq 1}(p_i-1)=\sum_{i:\hbox{ }p_i\geq 0}(p_i-1)+|\{i:\hbox{ }p_i=0\}|=(m-(n+1))+q_0(\alpha).$$We have a term $C_1=m-(n+1)$ depending only on the initial parameters, and then the desired dependence on $q_0$.\end{example}

\begin{example} \label{ex:R5346} Consider the elements of $\mathcal{R}(5^34^6)$ $$\alpha_1=44\hbox{ }5\hbox{ }44\hbox{ }5\hbox{ }44\hbox{ }5,\hbox{ }\alpha_2=444\hbox{ }5\hbox{ }44\hbox{ }5\hbox{ }4\hbox{ }5,\hbox{ and }\alpha_3=4444\hbox{ }5\hbox{ }4\hbox{ }5\hbox{ }4\hbox{ }5.$$ We have $$p(\alpha_1)=2220,\hbox{ }p(\alpha_2)=3210,\hbox{ }p(\alpha_3)=4110,\hbox{ }q(\alpha_1)=103,\hbox{ }q(\alpha_2)=1111,\hbox{  }q(\alpha_3)=1201.$$ Let us count the multiplicities of $\lambda_2=8855544$, which arise by joining two pairs of $4$'s. We can check that $$m_{\alpha_1}(\lambda_2)=3,\hbox{ }m_{\alpha_2}(\lambda_2)=2,\hbox{ and }m_{\alpha_3}(\lambda_2)=1.$$ Having fixed $n=3$, $m=6$, and $q_0(\alpha_i)=1$, the multiplicity can be calculated as $$m_{\alpha_i}(\lambda_2)=3-q_1(\alpha_i),$$ a function of only $q_1(\alpha_i)$. \end{example}

We now prepare to prove Theorem \ref{thm:ftom}. 

\begin{notation}\label{not:bin} Let $x$ be an integer and $k$ be a nonnegative integer. Set $$\binom{x}{k}=\begin{cases} 0 & \hbox{if }x<0 \\\frac{x(x-1)\cdots(x-k+1)}{k!} & \hbox{if }x\geq 0\end{cases}\hbox{ and }\binom{x}{k}_g=\frac{x(x-1)\cdots(x-k+1)}{k!},$$ even if $x<0$. \end{notation}

\begin{lemma} \label{lem:choosek} The number of ways to choose $k$ disjoint pairs of $a$'s from a string of $x$ consecutive $a$'s is $\binom{x-k}{k}$. \end{lemma}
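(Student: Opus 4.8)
The plan is to recognize this as the standard count of matchings in a path and to prove it via a tile-arrangement bijection. First I would interpret a \emph{pair} of $a$'s as two \emph{adjacent} positions among the $x$ consecutive $a$'s (this is the relevant notion, since only adjacent parts may be coarsened into a single $2a$). Then choosing $k$ disjoint such pairs is exactly the same as placing $k$ non-overlapping dominoes, each covering two adjacent cells, on a strip of $x$ cells; the $x-2k$ uncovered cells I would regard as singleton tiles.

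Under this correspondence, every valid choice of $k$ disjoint pairs is encoded uniquely by a word consisting of $k$ domino-tiles and $(x-2k)$ singleton-tiles read from left to right, and conversely every such word tiles the strip of $x$ cells and thereby records a unique choice of $k$ disjoint pairs. The total number of tiles is $k+(x-2k)=x-k$, and a word is determined by choosing which $k$ of these $x-k$ tiles are dominoes. Hence the count is $\binom{x-k}{k}$, as claimed. When $x-2k<0$ there are no tilings, and the binomial coefficient is $0$ by Notation \ref{not:bin}, so the formula persists.

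Alternatively, I could argue by induction on $x$: conditioning on whether the leftmost $a$ is unpaired or is the left member of a pair yields the recurrence $f(x,k)=f(x-1,k)+f(x-2,k-1)$, which $\binom{x-k}{k}$ satisfies by Pascal's rule, since $\binom{x-1-k}{k}+\binom{x-1-k}{k-1}=\binom{x-k}{k}$, with base cases $f(x,0)=1$ and $f(x,k)=0$ for $2k>x$ matching the binomial coefficient. The only delicate point in either approach --- and it is hardly an obstacle --- is the bookkeeping of the boundary cases where $2k>x$, which the convention of Notation \ref{not:bin} handles automatically; I expect the bijective argument to be the cleanest to write down.
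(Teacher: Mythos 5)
Your proof is correct, but it takes a genuinely different route from the paper. The paper proves the lemma by induction on $k$: it conditions on the position $j$ of the leftmost pair, applies the induction hypothesis to the remaining string of $x-j$ cells, and evaluates the resulting sum $\sum_{j\geq 2}\binom{(x-j)-(k-1)}{k-1}$ by rewriting each summand as a difference $\binom{x-j-k+2}{k}-\binom{x-j-k+1}{k}$ and telescoping. Your primary argument instead gives a direct bijection with tilings of a strip of $x$ cells by $k$ dominoes and $x-2k$ squares, so that a configuration is just a word in $x-k$ tiles of which $k$ are dominoes; this is cleaner, requires no induction or binomial identity, and makes the formula transparent rather than merely verified. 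Your fallback induction is also sound but inducts on $x$ (via the recurrence $f(x,k)=f(x-1,k)+f(x-2,k-1)$ and Pascal's rule) rather than on $k$ as the paper does, so even it is not the paper's argument. One small point worth noting: the vanishing of $\binom{x-k}{k}$ when $0\leq x-k<k$ is not due to the negative-argument convention of Notation \ref{not:bin} but to the product $(x-k)(x-k-1)\cdots(x-2k+1)$ containing a zero factor; the convention only covers $x-k<0$. Both mechanisms do give $0$, so your boundary bookkeeping goes through, but the attribution should be stated precisely since the lemma is later invoked in Theorem \ref{thm:ftom} with exactly such degenerate parameters.
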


\begin{proof} We use induction on $k$. If $k=0$ we indeed have $\binom{x-0}{0}=1$ way. Otherwise, if the leftmost pair uses the $(j-1)$-th and the $j$-th $a$'s from the left, then by induction there are $\binom{(x-j)-(k-1)}{k-1}$ choices for the remaining pairs. Summing over $j$, we have 
$$\sum_{j\geq 2}\binom{(x-j)-(k-1)}{k-1}=\sum_{j\geq 2}(\binom{x-j-k+2}{k}-\binom{x-j-k+1}{k})=\binom{x-k}{k},$$
as desired.\end{proof}

We will also need a helpful identity of Jensen. The following formulation, specialized at $z=-1$, is sufficient for our purposes.

\begin{theorem} \label{thm:jensen} \cite{jensen} Let $x$ and $y$ be integers and $v$ be a nonnegative integer. Then
$$\sum_{u=0}^v\binom{x-u}{u}_g\binom{y-(v-u)}{v-u}_g=\sum_{u=0}^v\binom{x+y-v-u}{v-u}_g(-1)^u.$$\end{theorem}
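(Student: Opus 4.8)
The plan is to prove the identity first for \emph{ordinary} binomial coefficients in the range of large nonnegative arguments, by a domino inclusion--exclusion that reuses Lemma~\ref{lem:choosek}, and then to upgrade it to the stated identity for the generalized coefficients $\binom{\cdot}{\cdot}_g$ by a polynomiality argument. For the upgrade, observe that for each fixed $v$ both sides are polynomials in $x$ and $y$: each $\binom{x-u}{u}_g$ is a polynomial in $x$ of degree $u$, and likewise in $y$, so the two finite sums are genuine polynomials in $(x,y)$. Hence it suffices to verify the identity on the set $\{(x,y)\in\mathbb{Z}^2:\ x\geq v,\ y\geq v\}$, which is Zariski-dense in $\mathbb{C}^2$: a polynomial vanishing there, viewed as a polynomial in $x$ with coefficients in $y$, vanishes at infinitely many integers $x$ for each fixed integer $y_0\geq v$, hence is identically zero. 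On this set every ``top'' entry that appears is nonnegative, since $x-u\geq 0$, $y-(v-u)\geq 0$, and $x+y-v-u\geq x+y-2v\geq 0$; therefore each $\binom{\cdot}{\cdot}_g$ equals the ordinary coefficient $\binom{\cdot}{\cdot}$, and it is enough to prove the ordinary-binomial identity for integers $x,y\geq v$.

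For such $x,y$, write $L(v,x,y)$ and $R(v,x,y)$ for the two sides. By Lemma~\ref{lem:choosek}, $\binom{x-u}{u}$ counts the placements of $u$ disjoint dominoes (adjacent pairs of cells) in a strip of $x$ cells, so $L(v,x,y)$ counts the placements of a total of $v$ disjoint dominoes in the disjoint union of a strip of $x$ cells and a strip of $y$ cells. I would compute this count by merging the two strips into a single strip of $x+y$ cells, the first occupying cells $1,\dots,x$ and the second cells $x+1,\dots,x+y$. Any placement of $v$ dominoes in the merged strip either avoids the unique straddling position $\{x,x+1\}$, and is then exactly a legal placement counted by $L(v,x,y)$, or it uses the domino $\{x,x+1\}$, in which case deleting that domino together with cells $x$ and $x+1$ leaves a placement of $v-1$ dominoes in strips of lengths $x-1$ and $y-1$, counted by $L(v-1,x-1,y-1)$. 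Since by Lemma~\ref{lem:choosek} the merged strip admits $\binom{(x+y)-v}{v}$ placements in all, this yields the recursion $L(v,x,y)=\binom{x+y-v}{v}-L(v-1,x-1,y-1)$.

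It remains to show $R$ obeys the same recursion and base case. Splitting off the $u=0$ term of $R(v,x,y)=\sum_{u=0}^v(-1)^u\binom{x+y-v-u}{v-u}$ and reindexing $u\mapsto u+1$ gives
\[
R(v,x,y)=\binom{x+y-v}{v}-\sum_{u=0}^{v-1}(-1)^u\binom{(x-1)+(y-1)-(v-1)-u}{(v-1)-u}=\binom{x+y-v}{v}-R(v-1,x-1,y-1),
\]
while $R(0,x,y)=\binom{x+y}{0}=1=L(0,x,y)$. An induction on $v$ (with $x,y\geq v$, so that $x-1,y-1\geq v-1$ at each step) then gives $L(v,x,y)=R(v,x,y)$ on the whole range, and the polynomiality argument of the first paragraph propagates this to the asserted identity for $\binom{\cdot}{\cdot}_g$ and all integers $x,y$. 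I expect the step needing the most care to be this reduction: the generalized coefficients are genuinely necessary in the intended applications (where arguments can be negative), and it is precisely the polynomiality in $x$ and $y$ that allows a single combinatorial identity on the nonnegative range to determine the identity everywhere. Once the straddling-domino dichotomy is set up, the combinatorial recursion itself is routine.
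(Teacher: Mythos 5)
Your proof is correct, but it is worth noting that the paper does not prove this statement at all: Theorem~\ref{thm:jensen} is quoted from the literature (it is Jensen's identity, specialized at $z=-1$, in the formulation of \cite{jensen}), so any proof is by definition a different route. Your argument is a nice self-contained alternative that fits the paper particularly well, since it reuses the paper's own Lemma~\ref{lem:choosek}: the left side counts placements of $v$ disjoint dominoes in two strips of lengths $x$ and $y$, the merged-strip dichotomy on the straddling domino $\{x,x+1\}$ gives $L(v,x,y)=\binom{x+y-v}{v}-L(v-1,x-1,y-1)$, and the right side visibly satisfies the same recursion and base case, so induction on $v$ settles the ordinary-binomial case for $x,y\geq v$. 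The upgrade to the generalized coefficients $\binom{\cdot}{\cdot}_g$ is also handled correctly: both sides are polynomials in $(x,y)$ of degree at most $v$, they agree on the set of integer points with $x,y\geq v$ (where every top entry is nonnegative, so $\binom{\cdot}{\cdot}_g=\binom{\cdot}{\cdot}$), and a two-variable polynomial vanishing on that set is identically zero. This polynomiality step is genuinely necessary, since the paper applies the identity with negative top entries, and you flagged it as the delicate point, which is the right instinct. What the citation buys the paper is brevity and access to the full one-parameter Jensen identity; what your proof buys is a short, elementary, self-contained treatment of exactly the special case needed, in the same combinatorial language (domino placements) already present in the proof of Theorem~\ref{thm:ftom}.
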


Now we are ready to prove Theorem \ref{thm:ftom}.\\

\noindent\emph{Proof of Theorem \ref{thm:ftom}.}
We use induction on $k$. When $k=0$ we simply have $m_\alpha(\lambda_0)=1$, so assume the result for $k'<k$. The partition $\lambda_k$ arises from coarsening by adding $k$ disjoint pairs of adjacent $a$'s from among the $(n+1)$ strings of $a$'s in $\alpha$, whose lengths are given by $p(\alpha)$. Note that even if $a=1$, this is the only way to form $\lambda_k$. We sum over the different ways of choosing how these $k$ pairs are distributed, indexed by nonnegative tuples $\eta=\eta_1\cdots\eta_{n+1}$, and by Lemma \ref{lem:choosek} there are $\binom{p_i-\eta_i}{\eta_i}$ ways to choose $\eta_i$ pairs from a string of $p_i$ $a$'s. We have  $$m_\alpha(\lambda_k)=\sum_{\eta_1+\cdots+\eta_{n+1}=k}\prod_i\binom{p_i-\eta_i}{\eta_i}.$$ From here there are two steps. First we replace the binomial coefficient $\binom{p_i-\eta_i}{\eta_i}$ by the polynomial $\binom{p_i-\eta_i}{\eta_i}_g$, using the induction hypothesis to show how the correction is related to the $q_j(\alpha)$ for small values of $j$. Then we use Theorem \ref{thm:jensen} to show that what remains depends only on $m$ and $n$.\\

For our first step, we write \begin{align}\nonumber m_\alpha(\lambda_k)&=\sum_{\eta_1+\cdots+\eta_{n+1}=k}\prod_i\binom{p_i-\eta_i}{\eta_i}\\\nonumber&=\sum_{\eta_1+\cdots+\eta_{n+1}=k}\prod_{i:\hbox{ }p_i\geq\eta_i}\binom{p_i-\eta_i}{\eta_i}_g\\\nonumber&=\sum_{\eta_1+\cdots+\eta_{n+1}=k}\prod_{\substack{i:\hbox{ }p_i\geq\eta_i\\\hbox{or }p_i=0}}\binom{p_i-\eta_i}{\eta_i}_g-D_0(\alpha,k)\\\nonumber&=\sum_{\eta_1+\cdots+\eta_{n+1}=k}\prod_{\substack{i:\hbox{ }p_i\geq\eta_i\\\hbox{or }p_i\leq 1}}\binom{p_i-\eta_i}{\eta_i}_g-D_0(\alpha,k)-D_1(\alpha,k)\\\nonumber&=\cdots=\sum_{\eta_1+\cdots+\eta_{n+1}=k}\prod_{\substack{i:\hbox{ }p_i\geq\eta_i\\\hbox{or }p_i\leq j'}}\binom{p_i-\eta_i}{\eta_i}_g-\sum_{j=0}^{j'}D_j(\alpha,k)\\\nonumber&=\cdots=\sum_{\eta_1+\cdots+\eta_{n+1}=k}\prod_{\substack{i:\hbox{ }p_i\geq\eta_i\\\hbox{or }p_i\leq k-1}}\binom{p_i-\eta_i}{\eta_i}_g-\sum_{j=0}^{k-1}D_j(\alpha,k)\\\nonumber&=\sum_{\eta_1+\cdots+\eta_{n+1}=k}\prod_i\binom{p_i-\eta_i}{\eta_i}_g-\sum_{j=0}^{k-1}D_j(\alpha,k),\end{align} where the $D_j(\alpha,k)$ represent the correction incurred by terms where $p_i=j$ and $\eta_i>j$. We will show that the $D_j(\alpha,k)$ depend only on $m$, $n$, and $q_{j'}(\alpha)$ for $j'\leq k-2$, with the exception of $D_{k-1}=(-1)^kq_{k-1}(\alpha)$. \\

As a short practice calculation to aid with the full calculation, suppose that $p_1=0$. Whenever $\eta_1=r>0$, then in order to compensate for adding a $p_1=0$ term, we will now need to subtract $$\binom{0-r}{r}_g\prod_{i\geq 2:\hbox{ }p_i\geq\eta_i}\binom{p_i-\eta_i}{\eta_i}_g.$$ Letting $p'=p_2\cdots p_{n+1}$ and $\eta'=\eta_2\cdots\eta_{n+1}$ be the tuples with the first parts removed, and $\alpha'$ be such that $p(\alpha')=p'$, we are subtracting in total $$\sum_{r=1}^{k}\binom{0-r}{r}_g\sum_{\eta'_1+\cdots+\eta'_n=k-r}\prod_{i:\hbox{ }p'_i\geq\eta'_i}\binom{p'_i-\eta'_i}{\eta'_i}_g=\sum_{r=1}^k\binom{-r}{r}_g m_{\alpha'}(\lambda_{k-r});$$ by our induction hypothesis, $m_{\alpha'}(\lambda_{k-r})$ depends only on $m$, $n$, and $q_j$ for $j\leq k-r-1\leq k-2$.\\

Now we will calculate $D_0(\alpha,k)$. If we have multiple zeroes in $p(\alpha)$ we sum over the number $t$ of indices $i$ with $p_i=0$ for which $\eta_i>0$. Let $p^t$ be a tuple with $t$ $0$'s removed, $\eta^t$ be the tuple with the corresponding indices removed, and $\alpha^t$ such that $p(\alpha^t)=p^t$. Then we are subtracting
\begin{align}\nonumber D_0(\alpha,k)&=\sum_{t=1}^{q_0(\alpha)}\binom{q_0(\alpha)}{t}\sum_{r_1,\ldots,r_t\geq1}\left(\prod_{t'=1}^t\binom{0-r_{t'}}{r_{t'}}_g\right)\sum_{\substack{\eta^t_1+\cdots+\eta^t_{n+1-t}\\=k-\sum_{t'}r_{t'}}}\prod_{i:\hbox{ }p^t_i\geq\eta^t_i}\binom{p^t_i-\eta^t_i}{\eta^t_i}_g\\\nonumber&=\sum_{t=1}^{q_0(\alpha)}\binom{q_0(\alpha)}{t}\sum_{r_1,\ldots,r_t\geq 1}\left(\prod_{t'=1}^t\binom{-r_{t'}}{r_{t'}}_g\right)m_{\alpha^t}(\lambda_{k-\sum_{t'}r_{t'}})\end{align}
where the first sum counts the number of indices $i$ with $\eta_i>0$ and the following part represents the contribution from these $t$ indices. The $r_{t'}$ are the values of $\eta_i$ at these indices, which are at least 1. The last sum is the multiplicity $m_{\alpha^t}(\lambda_{k-\sum_{t'}r_{t'}})$,
which again by our induction hypothesis only depends on $m$, $n$, and $q_j(\alpha)$ with $j\leq k-\sum_{t'}r_{t'}-1\leq k-2$, and so $D_0(\alpha,k)$ does too. \\

So we now have \begin{equation} \label{eq:d0}m_\alpha(\lambda_k)=\sum_{\eta_1+\cdots+\eta_{n+1}=k}\prod_{\substack{i:\hbox{ }p_i\geq\eta_i\\\hbox{ or }p_i=0}}\binom{p_i-\eta_i}{\eta_i}_g-D_0(\alpha,k).\end{equation}

By the same argument, the contribution arising from terms with $p_i=1$ is $$D_1(\alpha,k)=\sum_{t=1}^{q_1(\alpha)}\binom{q_1(\alpha)}{t}\sum_{r_1,\ldots,r_t\geq 2}\left(\prod_{t'=1}^t\binom{1-r_{t'}}{r_{t'}}_g\right)\sum_{\substack{\eta^t_1+\cdots+\eta^t_{n+1-t}\\=k-\sum_{t'}r_{t'}}}\prod_{\substack{i:\hbox{ }p^t_i\geq\eta^t_i\\\hbox{ or }p^t_i=0}}\binom{p^t_i-\eta^t_i}{\eta^t_i}_g,$$ where $p^t$ is the tuple with $t$ $1$'s removed and $\eta^t$ has the corresponding indices removed. Letting $\alpha^t$ be such that $p(\alpha^t)=p^t$, now this last sum by rearranging Equation \eqref{eq:d0} is $$\sum_{\substack{\eta^t_1+\cdots+\eta^t_{n+1-t}\\=k-\sum_{t'}r_{t'}}}\prod_{\substack{i:\hbox{ }p^t_i\geq\eta^t_i\\\hbox{ or }p^t_i=0}}\binom{p^t_i-\eta^t_i}{\eta^t_i}_g=m_{\alpha^t}(\lambda_{k-\sum_{t'}r_{t'}})+D_0(\alpha^t,k-\sum_{t'=1}^tr_{t'}).$$ By our induction hypothesis this depends only on $m$, $n$, and $q_j(\alpha)$ for $j\leq k-2$, and so $D_1(\alpha,k)$ does too. \\

In general, the contribution arising from terms with $p_i=j$ is $$D_j(\alpha,k)=\sum_{t=1}^{q_j(\alpha)}\binom{q_j(\alpha)}{t}\sum_{r_1,\ldots,r_t\geq j+1}\left(\prod_{t'=1}^t\binom{j-r_{t'}}{r_{t'}}_g\right)\sum_{\substack{\eta^t_1+\cdots+\eta^t_{n+1-t}\\=k-\sum_{t'}r_{t'}}}\prod_{\substack{i:\hbox{ }p^t_i\geq\eta^t_i\\\hbox{ or } p^t_i\leq j-1}}\binom{p^t_i-\eta^t_i}{\eta^t_i}_g,$$ where $p^t$ is the tuple with $t$ $j$'s removed, $\eta^t$ has the corresponding indices removed, and letting $\alpha^t$ be such that $p(\alpha^t)=p^t$, the latter sum is $$\sum_{\substack{\eta^t_1+\cdots+\eta^t_{n+1-t}\\=k-\sum_{t'}r_{t'}}}\prod_{\substack{i:\hbox{ }p^t_i\geq\eta^t_i\\\hbox{ or } p^t_i\leq j-1}}\binom{p^t_i-\eta^t_i}{\eta^t_i}_g=m_{\alpha^t}(\lambda_{k-\sum_{t'}r_{t'}})+\sum_{j'=0}^{j-1}D_{j'}(\alpha^t,k-\sum_{t'=1}^tr_{t'}),$$ so by our induction hypothesis if $j\leq k-2$, then $D_j(\alpha,k)$ depends only on $m$, $n$, and $q_{j'}(\alpha)$ for $j'\leq k-2$. Furthermore, our last term $D_{k-1}(\alpha,k)$ is 

$$\sum_{t=1}^{q_{k-1}(\alpha)}\binom{q_{k-1}(\alpha)}{t}\sum_{r_1,\ldots,r_t\geq k}\left(\prod_{t'=1}^t\binom{k-1-r_{t'}}{r_{t'}}_g\right)\sum_{\substack{\eta^t_1+\cdots+\eta^t_{n+1-t}\\=k-\sum_{t'}r_{t'}}}\prod_{\substack{i:\hbox{ }p^t_i\geq\eta^t_i\\\hbox{ or }p_i\leq k-2}}\binom{p^t_i-\eta^t_i}{\eta^t_i}_g.$$ 

However, here the only way to have $r_1,\ldots,r_t\geq k$ and a nonnegative tuple $\eta^t$ summing to $k-\sum_{t'} r_{t'}$ is when $t=1$, $r_1=k$, and $\eta^1$ is all zeroes, so we have $$D_{k-1}(\alpha,k)=\binom{q_{k-1}(\alpha)}{1}\binom{k-1-k}{k}_g=\binom{-1}{k}_gq_{k-1}(\alpha)=(-1)^kq_{k-1}(\alpha).$$ So our original count is now $$m_\alpha(\lambda_k)=\sum_{\eta_1+\cdots+\eta_{n+1}=k}\prod_i\binom{p_i-\eta_i}{\eta_i}_g-\sum_{j=0}^{k-2}D_j(\alpha,k)-(-1)^kq_{k-1}(\alpha),$$ where the $D_j(\alpha,k)$ are by our induction hypothesis functions of only $m$, $n$, and $q_{j'}(\alpha)$ for $j'\leq k-2$. This completes the first step. \\

Now it remains to show that the sum $$f(p)=\sum_{\eta_1+\cdots+\eta_{n+1}=k}\prod_i\binom{p_i-\eta_i}{\eta_i}_g$$ depends only on $m$ and $n$. Moreover, it suffices to show that $f(p)=f(p')$, where $$p'=(p_1+1)(p_2-1)p_3\cdots p_{n+1}.$$ This is because $f(p)$ is symmetric in the $p_i$ and any tuple with fixed size $m$ and fixed length $n+1$ can be achieved by a sequence of these steps. We separate the first two parts of $\eta$, summing over different values of $u=\eta_1+\eta_2$, and apply Theorem \ref{thm:jensen} to conclude that
 \begin{align}\nonumber f(p)&=\sum_{\eta_1+\cdots+\eta_{n+1}=k}\prod_i\binom{p_i-\eta_i}{\eta_i}_g\\\nonumber&=\sum_{u=0}^k\sum_{\eta_3+\cdots+\eta_{n+1}=k-u}\left(\sum_{\eta_1=0}^u\binom{p_1-\eta_1}{\eta_1}_g\binom{p_2-(u-\eta_1)}{u-\eta_1}_g\right)\prod_{i\geq 3}\binom{p_i-\eta_i}{\eta_i}_g\\\nonumber&=\sum_{u=0}^k\sum_{\eta_3+\cdots+\eta_{n+1}=k-u}\left(\sum_{\eta_1=0}^u\binom{p_1+p_2-u-\eta_1}{u-\eta_1}_g(-1)^u\right)\prod_{i\geq 3}\binom{p_i-\eta_i}{\eta_i}_g\\\nonumber&=\sum_{u=0}^k\sum_{\eta_3+\cdots+\eta_{n+1}=k-u}\left(\sum_{u=0}^v\binom{p_1+1-\eta_1}{\eta_1}_g\binom{p_2-1-(u-\eta_1)}{u-\eta_1}_g\right)\prod_{i\geq 3}\binom{p_i-\eta_i}{\eta_i}_g\\\nonumber&=\sum_{\eta_1+\cdots+\eta_{n+1}=k}\prod_i\binom{p'_i-\eta_i}{\eta_i}_g=f(p').\end{align}
 This concludes the proof. \hfill\qed

\begin{remark} \label{rem:ck2} From the induction in the proof of Theorem \ref{thm:ftom}, we can show that $C_k$ is an integer-valued polynomial in $m$, $n$, and $q_{j'}(\alpha)$ for $j'\leq k-1$ with rational coefficients. Moreover, the exponents of any monomial $\prod_{j'} q_{j'}^{e_{j'}}$ appearing here satisfy $\sum_{j'} (j'+1)e_{j'}\leq k$. \end{remark}

Now that we have proven Theorem \ref{thm:ftom}, it is a straightforward consequence to prove the goal of this section, namely Theorem \ref{thm:smalls}, which states that for $\alpha,\beta\in \mathcal{R}((a+1)^na^m)$, $$\hbox{ if }\ra\geq_s\rb,\hbox{ then }q(\alpha)\leq_{lex}q(\beta).$$
\emph{Proof of Theorem \ref{thm:smalls}.} Suppose towards a contrapositive that $q(\alpha)>_{lex}q(\beta)$ and that $q(\alpha)$ exceeds $q(\beta)$ by $z$ at smallest index $k$. By Theorem \ref{thm:ftom}, the multiplicities $m_\alpha(\lambda_{k'})=m_\beta(\lambda_{k'})$ for $k'\leq k$, and $$m_\alpha(\lambda_{k+1})-m_\beta(\lambda_{k+1})=-(-1)^{k+1}(q_k(\alpha)-q_k(\beta))=-(-1)^{k+1}z.$$ Therefore, by Theorem \ref{thm:jtcoarse} the difference $\ra-\rb$ has as the lexicographically least term \begin{align*}(-1)^{\ell(\alpha)}(-1)^{\ell(\lambda_{k+1})}(m_\alpha(\lambda_{k+1})-m_\beta(\lambda_{k+1}))h_{\lambda_{k+1}}=&-(-1)^{\ell(\alpha)}(-1)^{\ell(\alpha)-(k+1)}(-1)^{k+1}zh_{\lambda_{k+1}}\\=&-zh_{\lambda_{k+1}},\end{align*} so by Corollary \ref{cor:observation} can not be Schur-positive, so $\ra\ngeq_s\rb$.\hfill\qed

\begin{remark} \label{rem:se} In the proof of Theorem \ref{thm:shortends} we determined the lexicographically largest term that appears in a difference. Now, in the proof of Theorem \ref{thm:smalls}, we determined the lexicographically smallest term. \end{remark}

\begin{corollary} \label{cor:even} Suppose that $m=d(n+1)$ for some $d$. Then $$\alpha=a^d(a+1)a^d\cdots a^d(a+1)a^d$$ is a maximal element of $\mathcal{R}((a+1)^na^m)$. \end{corollary}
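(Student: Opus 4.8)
The plan is to deduce the result directly from Theorem \ref{thm:smalls} by showing that $\alpha$ realizes the lexicographically \emph{smallest} quasi-profile in the entire poset $\mathcal{R}((a+1)^na^m)$. First I would record the relevant statistics of $\alpha = a^d(a+1)a^d\cdots a^d(a+1)a^d$: since the $n$ long rows split the short rows into $n+1$ consecutive blocks, each of length exactly $d$, its profile is $p(\alpha) = d^{n+1}$, and therefore its quasi-profile is $q(\alpha) = 0^d(n+1)$; that is, $q_d(\alpha) = n+1$ while $q_j(\alpha) = 0$ for every $j \neq d$.

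Next I would prove the key combinatorial claim that, among all $\beta \in \mathcal{R}((a+1)^na^m)$, the quasi-profile $q(\alpha)$ is the unique lexicographic minimum. Every such $\beta$ has a profile of length $n+1$ and size $m$, so by Remark \ref{rem:profiles0} its quasi-profile obeys the two constraints $\sum_j q_j(\beta) = n+1$ and $\sum_j j\,q_j(\beta) = m = d(n+1)$. Suppose $q(\beta) \leq_{lex} q(\alpha)$. Since $q_j(\alpha) = 0 \leq q_j(\beta)$ for every $j < d$, the two tuples can first differ only at an index $\geq d$, and so we must have $q_j(\beta) = 0$ for all $j < d$. Hence every part of $p(\beta)$ is at least $d$; but $n+1$ parts, each at least $d$ and summing to $d(n+1)$, are forced to all equal $d$. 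Therefore $p(\beta) = d^{n+1} = p(\alpha)$, and since a composition is recovered from its profile (Remark \ref{rem:profiles}), we conclude $\beta = \alpha$.

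Finally I would assemble the maximality argument. Let $\beta \in \mathcal{R}((a+1)^na^m)$ satisfy $\rb \geq_s \ra$. Applying Theorem \ref{thm:smalls} with the roles of $\alpha$ and $\beta$ interchanged yields $q(\beta) \leq_{lex} q(\alpha)$, whence the claim above forces $\beta = \alpha$. Thus no element of the poset lies strictly above $\alpha$, so $\alpha$ is maximal, as desired.

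As for where the difficulty lies: all of the analytic content is already packaged in Theorem \ref{thm:smalls} (and, behind it, the delicate multiplicity count of Theorem \ref{thm:ftom}), so the only genuinely new work here is the combinatorial extremality of $q(\alpha)$. The subtle point — and what I would take care to emphasize — is that in this special case the lexicographically minimal quasi-profile is realized by a \emph{single} profile; this is precisely what upgrades the conclusion from "$q(\beta) = q(\alpha)$" to "$\beta = \alpha$", and hence yields true maximality rather than merely maximality up to permuting the positions of the long rows.
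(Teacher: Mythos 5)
Your proposal is correct and follows exactly the paper's approach: the paper's entire proof is the one-line observation that $\alpha$ is the unique element of $\mathcal{R}((a+1)^na^m)$ minimizing $q(\alpha)$ lexicographically, combined implicitly with Theorem \ref{thm:smalls}. Your write-up simply fills in the details the paper leaves tacit — the computation $q(\alpha)=0^d(n+1)$, the counting argument that $n+1$ parts each at least $d$ summing to $d(n+1)$ must all equal $d$, and the recovery of $\beta$ from its profile — all of which are accurate.
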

\begin{proof} The ribbon $\alpha$ is the unique element of $\mathcal{R}((a+1)^na^m)$ that minimizes $q(\alpha)$.\end{proof}

\begin{remark} \label{rem:smallmaxforposets} According to \eqref{eq:smallscor} of Corollary \ref{cor:maxforposets}, Corollary \ref{cor:even} confirms that the box diagonal diagram for $\mathcal{R}((a+1)^na^{d(n+1)})$ is indeed a maximal element, corroborating Conjecture~\ref{conj:maxel} in this case. \end{remark}

\begin{example} \label{ex:maxR} The ribbon $$\alpha=444\hbox{ }5\hbox{ }444\hbox{ }5\hbox{ }444\hbox{ }5\hbox{ }444\hbox{ }5\hbox{ }444$$ is a maximal element of $\mathcal{R}(5^44^{15})$. \end{example}

\begin{corollary} \label{cor:incompar} 
\hspace{2pt}
\begin{enumerate}
\item For any $t\geq 0$ the ribbons $$\alpha=a(a+1)(a+1)aaaa^t\hbox{ and }\beta=(a+1)aa(a+1)aaa^t$$ are incomparable.\\
\item For any fixed ribbon $\gamma$ with only parts $a$ and $(a+1)$, the ribbons $$\alpha=aa(a+1)(a+1)(a+1)\cdot\gamma\hbox{ and }\beta=(a+1)a(a+1)a(a+1)\cdot\gamma$$ are incomparable.
\end{enumerate} It follows immediately that the chains identified in Theorem \ref{thm:chains} are the \emph{only} cases when $\mathcal{R}((a+1)^na^m)$ is a chain. \end{corollary}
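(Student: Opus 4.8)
The plan is to pit the two necessary conditions of this section against one another. Theorem~\ref{thm:shortends} tells us that $\ra\geq_s\rb$ forces $SE(\alpha)\geq SE(\beta)$, whereas Theorem~\ref{thm:smalls} tells us that the same relation forces $q(\alpha)\leq_{lex}q(\beta)$. Hence if in each part I can produce ribbons satisfying both
$$SE(\alpha)>SE(\beta)\quad\hbox{and}\quad q(\alpha)>_{lex}q(\beta),$$
then $\ra\geq_s\rb$ is ruled out by Theorem~\ref{thm:smalls} and $\rb\geq_s\ra$ is ruled out by Theorem~\ref{thm:shortends}, so $\alpha$ and $\beta$ are incomparable. (That $SE(\alpha)\neq SE(\beta)$ incidentally shows $\alpha$ and $\beta$ are genuinely distinct classes, since $SE$ is preserved under reversal.) So all that each part requires is a direct computation of $SE$ and of the profiles $p(\alpha),p(\beta)$, from which the quasi-profiles are read off.

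For Part~1 I would read the end rows off directly: $\alpha=a(a+1)(a+1)a^{3+t}$ has short first and last rows so $SE(\alpha)=2$, while $\beta=(a+1)aa(a+1)a^{2+t}$ has a long first row and a short last row so $SE(\beta)=1$. The profiles are $p(\alpha)=1\,0\,(3+t)$ and $p(\beta)=0\,2\,(2+t)$, which give $q_0(\alpha)=q_0(\beta)=1$ but $q_1(\alpha)=1>0=q_1(\beta)$, so indeed $q(\alpha)>_{lex}q(\beta)$.

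For Part~2 the crucial point is that $\gamma$ is concatenated identically onto both ribbons, so it contributes the same tail to each profile and hence the same amount to every $q_j$; only the length-five prefixes differ. The prefix of $\alpha=aa(a+1)(a+1)(a+1)\cdot\gamma$ yields profile entries $2,0,0$ and the prefix of $\beta=(a+1)a(a+1)a(a+1)\cdot\gamma$ yields $0,1,1$, so $\alpha$ contributes one more $0$ to its profile than $\beta$ does and therefore $q_0(\alpha)=q_0(\beta)+1$, giving $q(\alpha)>_{lex}q(\beta)$ already at index $0$. At the same time $\alpha$ has a short first row and $\beta$ a long first row, while the two share a common last row (the last part of $\gamma$, or the final $(a+1)$ when $\gamma$ is empty), so $SE(\alpha)=SE(\beta)+1$. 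Both inequalities hold and incomparability follows as above.

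Finally I would deduce that the chains of Theorem~\ref{thm:chains} are the only ones by a case analysis on $(n,m)$. When $\min(n,m)\leq 1$ the poset is a single ribbon or is Chain~\eqref{chain:1a+1} or~\eqref{chain:1a}, all chains, so assume $n,m\geq 2$. The cases $(n,m)=(2,2)$ and $(2,3)$ are Chains~\eqref{chain:4} and~\eqref{chain:5}. For $n=2$ and $m\geq 4$, Part~1 with $t=m-4$ gives two incomparable elements of $\mathcal{R}((a+1)^2a^m)$; for $n\geq 3$ and $m\geq 2$, Part~2 with $\gamma$ chosen to contribute $n-3$ long rows and $m-2$ short rows gives two incomparable elements of $\mathcal{R}((a+1)^na^m)$. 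These two families exhaust every remaining $(n,m)$, so no poset outside the four stated families is a chain. The one place demanding care is precisely this last step: since the heavy lifting is already done by Theorems~\ref{thm:shortends} and~\ref{thm:smalls}, the only way the argument could silently fail is a gap in this coverage, so I would double-check that Parts~1 and~2 together realize exactly the complement, within $n,m\geq 2$, of the pairs $(2,2)$ and $(2,3)$.
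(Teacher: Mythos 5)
Your proposal is correct and takes essentially the same approach as the paper: in each part it rules out $\rb\geq_s\ra$ via Theorem~\ref{thm:shortends} (using $SE(\alpha)>SE(\beta)$) and rules out $\ra\geq_s\rb$ via Theorem~\ref{thm:smalls} (using $q(\alpha)>_{lex}q(\beta)$), with the same profile and quasi-profile computations, namely $p(\alpha)=1\,0\,(t+3)$, $p(\beta)=0\,2\,(t+2)$ in Part~1 and $p(\alpha)=200\cdot p(\gamma)$, $p(\beta)=011\cdot p(\gamma)$ in Part~2. Your explicit $(n,m)$ case analysis at the end correctly fills in what the paper compresses into ``it follows immediately,'' and the coverage (Part~1 handling $n=2$, $m\geq 4$; Part~2 handling $n\geq 3$, $m\geq 2$) is exactly right.
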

\begin{proof} \begin{enumerate}
\item By Theorem \ref{thm:shortends}, we know that $\rb\ngeq_s\ra$. Conversely, we calculate $$p(\alpha)=10(t+3),\hbox{ }q(\alpha)=1100\cdots01,\hbox{ }p(\beta)=02(t+2),\hbox{ and }q(\beta)=1010\cdots01$$ (or $q(\beta)=102$ if $t=0$). So $q(\alpha)>_{lex}q(\beta)$, and by Theorem \ref{thm:smalls}, we know that $\ra\ngeq_s\rb$.\\
\item By Theorem \ref{thm:shortends}, we know that $\rb\ngeq_s\ra$. Conversely, we calculate $$p(\alpha)=200\cdot p(\gamma),\hbox{ }q(\alpha)=201+q(\gamma),\hbox{ }p(\beta)=011\cdot p(\gamma),\hbox{ and }q(\beta)=120+q(\gamma),$$ where $\cdot$ is concatenation as before, and $+$ is pointwise addition of tuples. So $q(\alpha)>_{lex}q(\beta)$, and by Theorem \ref{thm:smalls}, we know that $\ra\ngeq_s\rb$.\end{enumerate}\end{proof}

\subsection{Minimal equitable ribbons} \label{subsec:minimal}

Having developed this machinery, we may now answer Conjecture~\ref{conj:minel} by proving that indeed the conjectured minimal element of $\mathcal{R}((a+1)^na^m)$ is minimal. As independent motivation, recall that Theorem \ref{thm:shortends} tells us that a large element of $\mathcal{R}((a+1)^na^m)$ tends to have short rows near the ends. Also, Theorem \ref{thm:smalls} tells us that a large element of $\mathcal{R}((a+1)^na^m)$ tends to have its long rows evenly spaced out. In view of these, it could be expected that the minimal element should be the one that is farthest from achieving either of these, and it is with this result that we conclude.

\begin{theorem} \label{thm:minimal} The ribbon $$\alpha=(a+1)^{\lceil\frac{n}{2}\rceil}a^m(a+1)^{\lfloor\frac{n}{2}\rfloor}$$ is a minimal element of $\mathcal{R}((a+1)^na^m)$. \end{theorem}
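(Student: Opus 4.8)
The plan is to combine the two necessary conditions already proved (Theorems~\ref{thm:smalls} and~\ref{thm:shortends}) with the constructive inequalities of Section~\ref{sec:tools} to exhibit $\alpha$ as the very bottom of its down-set. Write $\beta_i=(a+1)^ia^m(a+1)^{n-i}$ for $0\le i\le n$, so that $\alpha=\beta_{\lceil n/2\rceil}$ and, by Theorem~\ref{thm:hdl}, $\alpha^*=\beta_{\lfloor n/2\rfloor}$ with $r_{\beta_i}=r_{\beta_{n-i}}$. We may assume $m,n\geq 1$, as otherwise $\mathcal R((a+1)^na^m)$ is a single element.

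First I would compute the invariants of $\alpha$. Its profile is $p(\alpha)=0^{\lceil n/2\rceil}\,m\,0^{\lfloor n/2\rfloor}$, so its quasi-profile is $q(\alpha)=q_0q_1\cdots$ with $q_0=n$, $q_m=1$, and every other entry $0$. For an arbitrary $\gamma\in\mathcal R((a+1)^na^m)$ the number of zeros in $p(\gamma)$ equals $(n+1)$ minus the number of nonzero parts, and since $\sum_i p_i=m\geq 1$ there is at least one nonzero part; hence $q_0(\gamma)\le n$, with equality exactly when $p(\gamma)$ has a single nonzero part, necessarily equal to $m$, that is, exactly when $\gamma=\beta_i$ for some $i$. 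Thus $q(\alpha)$ is lexicographically maximal. Consequently, if $r_\alpha\geq_s r_\beta$ then Theorem~\ref{thm:smalls} gives $q(\beta)\geq_{lex}q(\alpha)$, forcing $q(\beta)=q(\alpha)$ and therefore $\beta=\beta_i$ for some $i$. (Theorem~\ref{thm:shortends} further forces $SE(\beta_i)\le SE(\alpha)$, but for $n\ge 2$ this only confines $i$ to $1\le i\le n-1$ and does not pin it down, which is why a constructive input is genuinely needed.)

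It therefore suffices to prove the chain of inequalities
$$r_{\beta_0}\geq_s r_{\beta_1}\geq_s\cdots\geq_s r_{\beta_{\lfloor n/2\rfloor}}=r_{\alpha^*}=r_\alpha,$$
that is, $r_{\beta_i}\geq_s r_{\beta_{i+1}}$ whenever the short block $a^m$ is slid one long row \emph{toward} the centre, together with its mirror image under reversal; using $r_{\beta_i}=r_{\beta_{n-i}}$ this exhibits $\alpha$ as the minimum of $\{\beta_i\}$. Granting this, if $r_\alpha\geq_s r_{\beta_i}$ then also $r_{\beta_i}\geq_s r_\alpha$, so by antisymmetry of $\geq_s$ we get $r_\alpha=r_{\beta_i}$, whence by Corollary~\ref{cor:ineq} $\beta_i\in\{\alpha,\alpha^*\}$. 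Hence nothing lies strictly below $\alpha$, and $\alpha$ is minimal.

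The crux is the block-sliding inequality $r_{\beta_i}\geq_s r_{\beta_{i+1}}$, and this is precisely where the single-swap tool of Theorem~\ref{thm:celltr} is insufficient: sliding the block past a long row one short row at a time (through configurations $(a+1)^ia^{m-k}(a+1)a^k(a+1)^{n-i-1}$) requires the dominance hypothesis of Theorem~\ref{thm:celltr} at each step, and one checks that this hypothesis holds only for the swaps with $k\ge\tfrac{m-1}{2}$ and fails, with the comparison reversing, for $k<\tfrac{m-1}{2}$; so the individual swaps peak in the middle of the block and do not assemble into a comparison of $\beta_i$ and $\beta_{i+1}$. Instead I would follow the pattern of Lemma~\ref{lem:adversaries}, whose Relation~3 of Chain~\eqref{chain:4}, namely $r_{(a+1)(a+1)aa}\geq_s r_{(a+1)aa(a+1)}$, is exactly the case $n=2$. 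Using Lemma~\ref{lem:connearcon} to multiply both $\beta_i$ and $\beta_{i+1}$ by a single long row $r_{(a+1)}$ and cancelling the common near-concatenation term (the two products being equal by Theorem~\ref{thm:hdl}), one rewrites $r_{\beta_i}-r_{\beta_{i+1}}$ as a difference of ribbons differing by the motion of a single cell, to which Theorems~\ref{thm:moving1} and~\ref{thm:moving1cor} apply, expressing it as a nonnegative sum $\sum_\nu c'_{\cdots,\nu}s_\nu$ of restricted Littlewood--Richardson counts. The main obstacle I expect is the bookkeeping in this last step: as $m$ and $n$ grow one must iterate the inclusion--exclusion over the several boundary rows that are peeled off and then exhibit explicit restricted LR fillings realizing every content that occurs, exactly the kind of extended enumeration carried out in the proof of Lemma~\ref{lem:adversaries}.
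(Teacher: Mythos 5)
Your first half coincides exactly with the paper's own opening: using Theorem~\ref{thm:smalls} to force $q(\beta)=q(\alpha)$, hence $\beta=(a+1)^{n-t}a^m(a+1)^t$ for some $t$, and then Theorem~\ref{thm:shortends} to exclude the case where the block of $a$'s sits at an end, is precisely how the paper's proof of Theorem~\ref{thm:minimal} begins. The divergence -- and the gap -- is in what comes next. The paper does \emph{not} prove that the ribbons $\beta_i$ form a chain; it only needs, and only proves, the non-comparability statement $\ra\ngeq_s r_{\beta_i}$ for the remaining values of $i$, and it does so non-constructively: switching the roles of $a$ and $(a+1)$ so that Theorem~\ref{thm:ftom} counts multiplicities of the coarsenings $\mu_k=(2a+2)^k(a+1)^{n-2k}a^m$, $\mu'_k=(2a+2)^k(2a+1)(a+1)^{n-2k-1}a^{m-1}$ and $\mu''_k$, it locates the lexicographically least partition at which $\mathcal{M}(\alpha)$ and $\mathcal{M}(\beta)$ differ, finds via Theorem~\ref{thm:jtcoarse} that the lex-least $h$-term of $\ra-\rb$ is $-h_{\mu'_t}$, and concludes by Corollary~\ref{cor:observation} that the difference cannot be Schur-positive. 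Your plan instead rests on the far stronger assertion that block-sliding yields a chain $r_{\beta_0}\geq_s r_{\beta_1}\geq_s\cdots\geq_s\ra$, and this is exactly what you do not prove.

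That chain inequality is not a matter of bookkeeping. Already for $n=2$ your proposed reduction fails: cancelling via Lemma~\ref{lem:connearcon} and Theorem~\ref{thm:hdl} gives
$$r_{(a+1)(a+1)a^m}-r_{(a+1)a^m(a+1)}=r_{(2a+1)a^{m-1}(a+1)}-r_{(2a+2)a^m},$$
and to apply Theorem~\ref{thm:moving1} to the right-hand side (moving a cell from the first row to the last row, $i=m+1$) one needs $2a+1\geq(m-1)a+(a+1)-(m+1)+1$, that is $2a\geq m(a-1)$, which fails whenever $m=3$ and $a\geq 4$, whenever $m=4$ and $a\geq3$, and for every $a\geq 2$ once $m\geq 5$. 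This is precisely why the paper's Lemma~\ref{lem:adversaries} could not simply invoke Theorems~\ref{thm:moving1} and~\ref{thm:moving1cor} for relation 5 of Chain~\eqref{chain:5} (the case $m=3$), but instead wrote the difference as $F_1-F_2$ of two Schur-positive expressions and then exhibited, by hand, restricted LR fillings of the $F_1$-shape realizing every content occurring in $F_2$ -- an enumeration carried out only for $n=2$ and $m\leq 3$, with no mechanism given for extending it to arbitrary $m$ and $n$ (for general $n$ one must also slide the block past long rows on both sides, where, as you yourself note, Theorem~\ref{thm:celltr} is unavailable). Whether the $\beta_i$ are even pairwise comparable for general $m,n$ is left open by the paper. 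So your argument is conditional on an unproven, and apparently difficult, family of inequalities, whereas the theorem can be -- and in the paper is -- obtained while bypassing them entirely.
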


\begin{proof} If $n=0$ or $m=0$ then $\alpha$ is the only element of $\mathcal{R}((a+1)^na^m)$ so the statement is trivially true, and if $n=1$ or $m=1$ then $\mathcal{R}((a+1)^na^m)$ is a chain so the result follows from Theorem \ref{thm:chains}. So suppose that $m,n\geq 2$. Let $\beta\in \mathcal{R}((a+1)^na^m)$ be a ribbon with $\beta<_s\alpha$. By Theorem \ref{thm:smalls}, $q(\beta)\geq_{lex}q(\alpha)$. However, $q(\alpha)=n0^{m-2}1$ achieves the lexicographically maximum possible value of $q(\alpha)$, so $q(\beta)=q(\alpha)=n0^{m-2}1$ and in $\beta$ all of the $a$'s are together, so we have $$\beta=(a+1)^{n-t}a^m(a+1)^t$$ for some $0\leq t<\lfloor\frac{n}{2}\rfloor$. By Theorem \ref{thm:shortends}, $t\geq 1$.\\

Because $q(\alpha)=q(\beta)$, we have by Theorem \ref{thm:ftom} that the multiplicities in $\mathcal{M}(\alpha)$ and $\mathcal{M}(\beta)$ agree at every $\lambda_k=(2a)^k(a+1)^na^{m-2k}$. So we now move to count the multiplicities at the next smallest partitions. \\

Let us consider the partitions $$\mu_k=(2a+2)^k(a+1)^{n-2k}a^m,$$ which arise from joining $k$ pairs of $(a+1)$'s. Note that simply by switching the roles of the $a$'s and the $(a+1)$'s, we have $$m_{(a+1)^{n-t}a^m(a+1)^t}((2a+2)^k(a+1)^{n-2k}a^m)=m_{a^{n-t}(a+1)^ma^t}((2a)^k(a+1)^ma^{n-2k}),$$ because the right hand side now counts the number of ways to join $k$ pairs of $a$'s. Therefore, the problem of counting the multiplicities of the $\mu_k$ reduces to the problem of counting those of the $\lambda_k$ from Theorem \ref{thm:ftom}. It follows from Theorem \ref{thm:ftom} that $$m_\alpha(\mu_k)=m_\beta(\mu_k)\hbox{ for }k\leq t\hbox{ and }m_\alpha(\mu_{t+1})-m_\beta(\mu_{t+1})=(-1)^{t+1}.$$

We should note that the multiplicities in $\mathcal{M}(\alpha)$ and $\mathcal{M}(\beta)$ not only agree at every $\mu_k$ for $k\leq t$, but they also agree at any partition that is obtained from further coarsening by adding any $a$'s together in any way. This is  because both $\alpha$ and $\beta$ have all $m$ $a$'s together, and they are not affected by the coarsenings that form the $(2a+2)$'s. So we need not worry about any of these partitions, even though when $a=1$ they may be smaller lexicographically than some of the $\mu_k$.\\

Similarly, consider the partitions
 $$\mu'_k=(2a+2)^k(2a+1)(a+1)^{n-2k-1}a^{m-1},$$ which arise from joining either the leftmost or the rightmost $a$ to a neighbouring $(a+1)$, then joining $k$ pairs of $(a+1)$'s as before when we counted the multiplicity of $\mu_k$. Then by switching $a$'s and $(a+1)$'s again, the problem reduces to that of counting multiplicties of $\lambda_k$ as in Theorem \ref{thm:ftom}. We have \begin{align}\nonumber m_{(a+1)^{n-t}a^m(a+1)^t}((2a+2)^k(2a+1)(a+1)\cdots)&=m_{(a+1)^{n-t-1}a^{m-1}(a+1)^t}((2a+2)^k(a+1)\cdots)\\\nonumber&+m_{(a+1)^{n-t}a^{m-1}(a+1)^{t-1}}((2a+2)^k(a+1)\cdots)\\\nonumber&=m_{a^{n-t-1}(a+1)^{m-1}a^t}((2a)^k(a+1)\cdots)\\\nonumber&+m_{a^{n-t}(a+1)^{m-1}a^{t-1}}((2a)^k(a+1)\cdots)\end{align} and as before, using Theorem \ref{thm:ftom} we find that $$m_\alpha(\mu'_k)=m_\beta(\mu'_k)\hbox{ for }k\leq t-1\hbox{ and }m_\alpha(\mu'_t)-m_\beta(\mu'_t)=(-1)^t.$$ Again, because in $\alpha$ and $\beta$ all the $a$'s are together, we need not consider partitions that arise from $\mu'_k$ for $k\leq t-1$ by further coarsening by adding $a$'s in any way.\\

Finally, we apply the same process to the partitions $$\mu''_k=(2a+2)^k(2a+1)^2(a+1)^{n-2k-2}a^{m-2},$$ which arise from now joining both the leftmost and the rightmost $a$ (recall that $m\geq 2$) to neighbouring $(a+1)$'s, then joining $k$ pairs of $(a+1)$'s as before. Using a similar reduction and applying Theorem \ref{thm:ftom}, we find that $$m_{(a+1)^{n-t}a^m(a+1)^t}((2a+2)^k(2a+1)^2(a+1)\cdots)=m_{a^{n-t}(a+1)^ma^t}((2a+1)^2(2a)^k(a+1)\cdots)$$ and  $$m_\alpha(\mu''_k)=m_\beta(\mu''_k)\hbox{ for }k\leq t-1\hbox{ and }m_\alpha(\mu''_t)-m_\beta(\mu''_t)=(-1)^t.$$ Again, because in $\alpha$ and $\beta$ all the $a$'s are together, we need not consider partitions that arise from $\mu''_k$ for $k\leq t-1$ by further coarsening by adding $a$'s in any way.\\

In conclusion, the lexicographically smallest partition at which $\mathcal{M}(\alpha)$ and $\mathcal{M}(\beta)$ differ is $\mu'_t$, at which $$m_\alpha(\mu'_t)-m_\beta(\mu'_t)=(-1)^t.$$ Therefore, by Theorem \ref{thm:jtcoarse} the difference $\ra-\rb$ has as the lexicographically least term $$(-1)^{\ell(\alpha)}(-1)^{\ell(\mu'_t)}(m_\alpha(\mu'_t)-m_\beta(\mu'_t))h_{\mu'_t}=(-1)^{\ell(\alpha)}(-1)^{\ell(\alpha)-t-1}(-1)^th_{\mu'_t}=-h_{\mu'_t},$$ so by Corollary \ref{cor:observation} can not be Schur-positive, a contradiction.\end{proof}

\section*{Acknowledgements}\label{sec:acknow} The authors would like to thank Peter McNamara for very helpful conversations, for generously sharing ideas, and for suggesting fruitful avenues of research. They would also like to thank Fran\c{c}ois Bergeron for sharing the beautiful calculation for the rarity of Schur-positivity with them, and the referee for thoughtful comments.

%: Bibliography

%\def\cprime{$'$}

\end{document}